\newtheorem{thm}{Theorem}[section]
\newtheorem{cor}[thm]{Corollary}
\newtheorem{lem}[thm]{Lemma}
\newtheorem{prop}[thm]{Proposition}
\newtheorem{claim}[thm]{Claim}
\theoremstyle{remark}
\newtheorem{remark}[thm]{Remark}
\numberwithin{equation}{section}
\newcommand{\R}{\mathbb R}
\newcommand{\Z}{\mathbb Z}
\newcommand{\ric}{\mathrm{Ric}}
\newcommand{\vol}{\mathrm{Vol}}
\newcommand{\const}{\mathrm{const}}
\newcommand{\diam}{\mathrm{diam}}
\newcommand{\Hess}{\mathrm{Hess}}
\newcommand{\rem}{\mathrm{Rm}}
\newcommand{\injrad}{\mathrm{inj}}
\newcommand{\Diff}{\mathrm{Diff}}
\newcommand{\Nil}{\mathrm{Nil}}
\newcommand{\Sol}{\mathrm{Sol}}
\newcommand{\SL}{\mathrm{SL}}
\newcommand{\SO}{\mathrm{SO}}
\newcommand{\Tr}{\mathrm{Tr}}
\newcommand{\Aut}{\mathrm{Aut}}
\def\XXint#1#2#3{{\setbox0=\hbox{$#1{#2#3}{\int}$}
     \vcenter{\hbox{$#2#3$}}\kern-.5\wd0}}
\begin{document}
\title{Ricci flow on three-dimensional manifolds with symmetry}
\author{John Lott and Natasa Sesum}
\address{Department of Mathematics\\
University of California - Berkeley\\
Berkeley, CA  94720-3840\\
USA} \email{lott@math.berkeley.edu}
\address{Department of Mathematics\\
Rutgers University\\
Piscataway, NJ 08854-8019\\
USA} \email{natasas@math.rutgers.edu}

\thanks{The research of the first author was partially supported by
NSF grant DMS-0900968.  The research of the second author was
partially supported by NSF grant DMS-1110145}
\date{October 4, 2011}
\subjclass[2010]{53C44,57M50}

\begin{abstract}
We describe the Ricci flow on two classes of
compact three-dimensional
manifolds :\\
1. Warped products with a circle fiber
over a two-dimensional base.\\
2. Manifolds with a free local isometric
$T^2$-action.
\end{abstract}

\maketitle

\section{Introduction} \label{sec1}

In understanding three-dimensional Ricci flow
solutions, a special role is played by three-manifolds with symmetry.
It is plausible that one can get more precise results about such
Ricci flows than in the general case.  Consequently, such
Ricci flows with symmetry have been studied for quite a while.
We begin by describing some of the earlier results.

Locally homogeneous three-dimensional Ricci flow solutions were examined by
Isenberg-Jackson \cite{Isenberg-Jackson} and Knopf-McLeod \cite{Knopf-Mcleod}.
The flow equations reduced to a system of three coupled ODEs.
The solutions are now fairly well understood;
see, for example, \cite[Section 3]{L0}.

Certain three-dimensional
Ricci flow solutions with a two-dimensional isometry group
were analyzed by Carfora-Isenberg-Jackson 
\cite{CIJ} and Hamilton \cite[Section 11]{Ha}.
They considered Riemannian
three-manifolds that admit a free isometric $T^2$-action. The base is a circle
and the total space is (necessarily) diffeomorphic to a $3$-torus.
Under some additional assumptions
(${\mathcal F}$-metric \cite{CIJ} or square torus metric 
\cite[Section 11]{Ha})
it was shown that the Ricci 
flow exists for all time and converges to a flat metric.

Hamilton and Isenberg considered a twisted version of such
torus bundles \cite{Hamilton-Isenberg}. That is, there was a local
(with respect to the base) free isometric $T^2$-action. The
$T^2$-bundle over the circle was globally twisted by a hyperbolic element of
$\SL(2, \Z)$; see Subsection \ref{subsec3.1} for a more precise description.
Under the additional assumption of a ``solv-Gowdy'' metric, 
it was shown that the
Ricci flow approaches that of a locally homogeneous $\Sol$-metric;
see also \cite{Knopf}.

Passing to one-dimensional isometry groups, a natural class
of geometries comes from warped product metrics
with circle fibers and a closed surface base $M$. One starts
with a product metric on $N = M \times S^1$ and then allows the
circle length over $m \in M$ to become $m$-dependent. That is,
we consider Riemannian metrics $h$ on $N$ of the form
\begin{equation} \label{1.1}
h = g + e^{2u} d\theta^2,
\end{equation}
where $g$ is a Riemannian metric on $M$, $u \in C^\infty(M)$ and
$\theta$ is the standard coordinate on $S^1$.
It is not hard to see that the Ricci flow preserves the
warped product structure. The Ricci flow equation on $N$ becomes
two coupled evolution equations on $M$ for $g(t)$ and $u(t)$.
The evolution equation for $g(t)$ is like the Ricci flow equation 
on the surface $M$, but there is an extra term involving $u(t)$.
Because of this extra term, the techniques used to
analyze Ricci flow on surfaces
\cite[Chapter 5]{Chow-Knopf} break down.

When $M$ is a two-sphere, Xiaodong Cao showed that the product of $\R$
and a cigar soliton cannot arise as a finite-time dilation limit
of a warped product Ricci flow on $S^2 \times S^1$.
\cite{Cao}. His argument used an isoperimetric inequality.
(Cao's work preceded Perelman's proof that the product of $\R$
and a cigar soliton can never arise as a finite-time dilation limit
for the Ricci flow on a compact three-manifold \cite{Perelman}.)

In the present paper we use new techniques to
give general results about the Ricci flow
on three-manifolds with symmetries.  One of our tools is the
result of \cite{L} giving the long-time behavior of a three-dimensional
Ricci flow solution $(N, h(\cdot))$
satisfying $\max_{p \in N} |\rem^N|(p,t) =
O(t^{-1})$ and $\diam(N, h(t)) = O(\sqrt{t})$. Thus one of our main
goals is to show that these bounds are satisfied in the relevant cases.
In particular, we show that $\max_{p \in N} |\rem^N|(p,t) =
O(t^{-1})$ for all of the immortal Ricci flows under consideration.
(A Ricci flow solution is immortal if it exists for $t \in [0, \infty)$.)
It is an open question whether an immortal Ricci flow solution on a
compact three-manifold always satisfies this curvature bound.

To describe the results of the paper, we
start with warped products.
In the special case of a product metric, the
Ricci flow solution is the isometric product of $S^1$ with
a Ricci flow solution on the base $M$. Such two-dimensional
Ricci flows are well understood \cite[Chapter 5]{Chow-Knopf}.
If $h(0)$ is a warped product metric on $N  = M \times S^1$
then a natural conjecture is that the Ricci flow
asymptotically approaches a product flow.

\begin{thm} \label{thm1.1}
Let $h(\cdot)$ be a Ricci flow solution on a closed connected 
orientable three-dimensional manifold $N$. Suppose that $h(0)$ is a
warped product metric as in (\ref{1.1}) with 
a two-dimensional orientable base $M$.
\begin{enumerate}
\item[(i)]
If $\chi(M) > 0$ then there is a finite singularity time $T<\infty$.
As $t \rightarrow T^-$, the lengths of the circle fibers remain
uniformly bounded above and below.  For any $p \in N$, the pointed
smooth limit
$\lim_{t \rightarrow T^-} \left( N, p, \frac{1}{T-t} h(t) \right)$
exists and is the isometric product of $\R$ with a sphere
$S^2$ of constant curvature $\frac12$.

\item[(ii)] If $\chi(M) \le 0$ then the Ricci flow exists for all
$t \in [0, \infty)$. Also, there is a constant $C < \infty$
so that for all $p \in N$ and $t \in [0, \infty)$, one has
$\left| \rem^N \right| (p,t) \le \frac{C}{t}$.

\item[(iii)]
If $\chi(M) = 0$ then $\lim_{t \rightarrow \infty} h(t)$
exists and is a flat metric on $T^3$. The convergence is
exponentially fast.

\item[(iv)] If $\chi(M) < 0$, put $\widehat{g}(t) = \frac{g(t)}{t}$.
For any $i_0 > 0$, 
define the $i_0$-thick part of $(M, \widehat{g}(t))$ by
\begin{equation} \label{1.2}
X_{i_0}(t) = 
\{m \in M \, : \, \injrad_{\widehat{g}(t)}(m) \ge i_0\}.
\end{equation}
Then
\begin{equation} \label{1.3}
\lim_{t \rightarrow \infty} \max_{x \in X_{i_0}(t)} |R_{\widehat{g}(t)}(x) + 1|
= 0
\end{equation}
and
\begin{equation} \label{1.4}
\lim_{t \rightarrow \infty} \max_{x \in X_{i_0}(t)} 
|\widehat{\nabla} u|_{\widehat{g}(t)}(x)
= 0.
\end{equation}
For all sufficiently small $i_0$, if $t$ is sufficiently large
then $X_{i_0}(t)$ is nonempty.
\end{enumerate}
\end{thm}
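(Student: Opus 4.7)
The proof of (iv) will combine the curvature bound $|\rem^N| \le C/t$ from (ii) with the long-time structure theorem of \cite{L}. The plan proceeds in three stages: a priori control of the warping function $u$, verification of the geometric hypotheses of \cite{L}, and extraction and identification of the limit on the thick part.

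First I would establish a priori control on $u$. Since $\partial_t u = \Delta u + |\nabla u|^2 = e^{-u}\Delta e^u$, the function $v = e^u$ satisfies the linear heat equation $\partial_t v = \Delta_{g(t)} v$ under the evolving metric, and the maximum principle gives uniform two-sided bounds $0 < c_1 \le v \le c_2$; the fibers therefore have uniformly bounded length. A Bochner computation using $\partial_t g = -Rg + 2\Hess u + 2\,du\otimes du$ and the 2D identity $\ric = \tfrac{R}{2}g$ yields
\begin{equation*}
\partial_t |\nabla u|^2 = \Delta|\nabla u|^2 + 2\,\Hess u(\nabla u,\nabla u) - 2|\Hess u|^2 - 2|\nabla u|^4,
\end{equation*}
and absorbing the cross term by $2\alpha\beta \le \alpha^2 + \beta^2$ then comparing with the ODE $F' = -F^2$ produces $|\nabla u|^2 \le C/t$; equivalently $|\widehat{\nabla} u|_{\widehat{g}(t)}$ is uniformly bounded.

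Next I would verify the hypotheses of \cite{L}. The warped product curvature formulas give $|\rem^M|_g \le |\rem^N|_h \le C/t$, so $(M,\widehat{g}(t))$ has bounded curvature. The area evolves as $\partial_t \vol(M,g) = -4\pi\chi(M) + \int_M |\nabla u|^2\,dA_g$, which combined with $\chi(M)<0$ and the gradient bound above gives $\vol(M,g(t)) \sim -4\pi\chi(M)\,t$; thus $\vol(M,\widehat{g}(t))$ stays bounded, and together with bounded curvature on the closed surface this yields $\diam(M, g(t)) = O(\sqrt{t})$ and hence $\diam(N, h(t)) = O(\sqrt{t})$. Now \cite{L} applies. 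The fiber length in $\widehat{h}(t)=h(t)/t$ is $O(1/\sqrt{t})$, so $(N,\widehat{h}(t))$ collapses onto the 2D base and the nontrivial limits live on $(M,\widehat{g}(t))$. On any thick region $X_{i_0}(t)$, Cheeger-Gromov compactness extracts pointed smooth subsequential limits of $(M,\widehat{g}(t))$, and \cite{L} identifies these limits as hyperbolic surfaces normalized by $R = -1$, yielding \eqref{1.3}. For \eqref{1.4}, reparametrizing time by $\tau = \log t$ turns the $u$-equation on the rescaled base into $\partial_\tau v = \widehat{\Delta} v$; on the thick part, where $\widehat{g}(t)$ approaches $g_{\mathrm{hyp}}$, this heat flow drives $v$ to a constant on any pointed limit, so a contradiction argument gives $|\widehat{\nabla} u|_{\widehat{g}(t)} \to 0$ uniformly on $X_{i_0}(t)$. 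Nonemptiness of $X_{i_0}(t)$ for small $i_0$ and large $t$ follows from $\vol(M,\widehat{g}(t)) \to -4\pi\chi(M) > 0$ and the 2D Margulis lemma in bounded curvature.

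The main difficulty is the normalization step: extracting subsequential limits by Cheeger-Gromov is routine, but certifying that the limit scalar curvature is precisely $-1$ is the hard content of \cite{L}. In the warped product setting one must additionally check that the cross-terms $2\Hess u + 2\,du\otimes du$ in the base evolution are asymptotically negligible, which follows from the combination of $|\nabla u|^2 = O(1/t)$ and the heat-equation convergence of $u$ to a constant on each limit.
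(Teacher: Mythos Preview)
Your overall architecture is reasonable, but there is a genuine gap at the heart of Stage~2: the claim that bounded curvature together with $\vol(M,\widehat g(t)) \to -4\pi\chi(M)$ yields $\diam(M,g(t)) = O(\sqrt t)$ is false. A sequence of closed hyperbolic surfaces pinching a short geodesic has curvature $\equiv -1$ and fixed area $-2\pi\chi(M)$, yet diameter tending to infinity. The paper does \emph{not} establish $\diam(M,g(t)) = O(\sqrt t)$ in the $\chi(M)<0$ case; in fact Remark~\ref{rem2.20} flags exactly this as the missing ingredient that would upgrade the conclusion. Without the diameter bound you cannot invoke the long-time structure theorem of \cite{L} in the form you want, so your identification ``\cite{L} identifies these limits as hyperbolic surfaces normalized by $R=-1$'' is unsupported.

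The paper circumvents the diameter issue entirely. It works only with \emph{pointed} limits on the thick part and identifies them by two monotone integral quantities specific to the warped-product system, neither of which you mention. First, writing $S = R - |\nabla u|^2$, one computes that $E(t) = \int_M |\nabla u|^2\,dV$ satisfies $dE/dt \le -E^2/V$ (Lemma~\ref{lem2.1}); combined with the linear volume growth this gives $E(t) \to 0$, which is scale-invariant in 2D and forces $u_\infty$ constant on any pointed limit. Second, $V(t)/t$ is nonincreasing (Lemma~\ref{lem2.4}) and converges to $-4\pi\chi(M)$; feeding this into the identity $\frac{d}{dt}(V/t) = -\frac{1}{t}\int_M (S + \tfrac{1}{t})\,dV$ shows that $\int_a^b \int_{M_\infty}(R_{g_\infty} + \tfrac{1}{t})\,dV\,\tfrac{dt}{t} = 0$ on the limit, hence $R_{g_\infty(t)} \equiv -1/t$. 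Your heat-flow argument for $u$ and your appeal to \cite{L} for $R$ would both need these ingredients (or substitutes) to go through, since the pointed limit surface need not be compact and \cite{L} is not available without diameter control.
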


\begin{remark} \label{rem1.2}
The proof of Theorem \ref{thm1.1}.(i) is essentially contained in List's paper
\cite{Li} on a modified Ricci flow.  The only thing missing from 
\cite{Li} is the
observation that his flow differs from the warped product flow by
a Lie derivative.
\end{remark}

\begin{remark} \label{rem1.3}
The proof of the curvature bound in
Theorem \ref{thm1.1}.(ii) is by contradiction, using a blowup
argument, a sharp volume estimate and the Gauss-Bonnet theorem.
\end{remark}

\begin{remark} \label{rem1.4}
The proof of Theorem \ref{thm1.1}.(iii) is somewhat indirect.
We first show that $\vol(M, g(t)) = O(\sqrt{t})$ and that the
length of the shortest noncontractible curve on $M$ is nondecreasing
in $t$.
A geometric argument then shows that $\diam(M, g(t)) = O(\sqrt{t})$.
From \cite{L}, we deduce that 
\begin{equation}
\left( \max_{p \in N} |\rem^N|(p,t) \right) 
\cdot \diam^2(N, h(t)) \: = \: o(t).
\end{equation}
Rescaling at a given time $t$ to diameter one, 
if $t$ is sufficiently large then 
we can assume that $\max_{p \in N} |\rem^N|(p,t)$ is arbitrarily small.  
After passing to a finite
cover $\widehat{N}$, we can assume that there is a universal 
lower bound on the injectivity radius of the pullback metric
$\widehat{h}(t)$. By the linear stability
of flat metrics \cite{GIK}, 
$\lim_{t \rightarrow \infty} \widehat{h}(t)$ exists
and is a flat metric. Hence
$\lim_{t \rightarrow \infty} h(t)$ exists
and is a flat metric.
\end{remark}

\begin{remark} \label{rem1.5}
Theorem \ref{thm1.1}.(iv) says that in the case $\chi(M) < 0$
and as $t \rightarrow \infty$,
over a large part of $M$ the flow $(N, h(t))$ approaches
a product flow of
$S^1$ times a finite-volume surface of constant sectional
curvature $- \: \frac{1}{2t}$.
Our result here is possibly nonoptimal;
see Remark \ref{rem2.20}.
\end{remark}

Our other main result is about Ricci flow solutions with a 
local $U(1) \times U(1)$ symmetry.

\begin{thm} \label{thm1.6}
Let $N$ be an orientable three-manifold that fibers over $S^1$ with
$T^2$-fibers.
Choosing an
orientation for $S^1$, let $H \in \SL(2, \Z) = \pi_0(\Diff^+(T^2))$ be the
holonomy of the torus bundle.
We can consider $N$ to be the total space of a 
twisted principal $U(1) \times U(1)$
bundle, where the twisting is determined by $H$.

Let $h(\cdot)$ be a Ricci flow solution on $N$.
Suppose that $h(0)$ is invariant under the local $U(1) \times U(1)$ actions.
Then the Ricci flow exists for all
$t \in [0, \infty)$. There is a constant $C < \infty$
so that for all $p \in N$ and $t \in [0, \infty)$, one has
$\left| \rem^N \right| (p,t) \le \frac{C}{t}$.

\begin{enumerate}
\item[(i)]
If $H$ is elliptic, i.e. has finite order, then
$\lim_{t \rightarrow \infty} h(t)$
exists and is a flat metric on $N$.
The convergence is exponentially fast.

\item[(ii)]
Suppose that $H$ is hyperbolic, i.e. has two distinct real eigenvalues.
We write $h(t)$ in the form
\begin{equation}
h(t) \: = \: g_{yy}(y,t) \: dy^2 \: + \: (dx)^T G(y,t) dx,
\end{equation}
where $\{x^1, x^2\}$ are local coordinates on $T^2$ and
$y \in [0, 1)$ is a local coordinate on $S^1$.
Then up to an overall change of parametrizations for $S^1$ and $T^2$, 
we have
\begin{align} \label{conv}
\lim_{t \rightarrow \infty} \frac{g_{yy}(y,t)}{t} & \: = \: \frac12 \: \Tr(X^2), \\
\lim_{t \rightarrow \infty} G(y,t) & \: = \: e^{yX}, \notag
\end{align}
where $X$ is the real symmetric matrix such that $e^X = H^T H$.
The convergence in (\ref{conv}) is power-decay fast in $t$.
\end{enumerate}
\end{thm}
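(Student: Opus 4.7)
The plan is to exploit the local $U(1) \times U(1)$-invariance, which is preserved by the Ricci flow, in order to reduce the equations on $N$ to a parabolic system on $[0,1)$ for the pair $\bigl( g_{yy}(y,t), G(y,t) \bigr)$, with a twisted periodicity condition on $G$ determined by $H$. Long-time existence and the universal curvature bound $|\rem^N|(p,t) \le C/t$ are treated together, following the strategy sketched in Remark \ref{rem1.3} for Theorem \ref{thm1.1}.(ii). I would argue by contradiction: pick sequences $(p_i, t_i)$ with $t_i |\rem^N|(p_i, t_i) \to \infty$ and take a parabolic blowup based at these points. The pointed limit inherits a local $T^2$-symmetry and is a non-flat ancient solution with bounded curvature. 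Using $\kappa$-noncollapsing on scales compatible with the fiber structure, together with the classification of $T^2$-invariant ancient three-dimensional solutions, such a limit can be ruled out.

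With $|\rem^N| = O(1/t)$ in hand, the result of \cite{L} applies once $\diam(N, h(t)) = O(\sqrt{t})$ is established, which one verifies by controlling the length of the base circle directly from the reduced equation for $g_{yy}$. In case (i), when $H$ has finite order $k$, I would pass to the $k$-fold cyclic cover $\widehat{N} \to N$ along the base, so that $\widehat{N}$ is diffeomorphic to $T^3$ and carries a pullback $U(1)^2$-invariant Ricci flow $\widehat{h}(t)$. Theorem \ref{thm1.1}.(iii) then gives exponential convergence of $\widehat{h}(t)$ to a flat metric on $T^3$; since the deck action acts isometrically on the limit, $h(t)$ descends to an exponential flat limit on $N$.

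For case (ii), I would first verify that, up to a global change of coordinates on the $T^2$-fiber,
\[
h_\infty(y,t) \: = \: \tfrac{t}{2} \, \Tr(X^2) \, dy^2 \: + \: (dx)^T e^{yX} \, dx
\]
is an expanding Ricci soliton on the twisted $T^2$-bundle. Since $H \in \SL(2,\Z)$ is hyperbolic, $H^T H$ is symmetric positive-definite with distinct positive eigenvalues and determinant $1$, so $X := \log(H^T H)$ is a well-defined traceless symmetric matrix; the monodromy identity $e^{(y+1)X} = H^T e^{yX} H$ can then be arranged by a suitable orthogonal change of the fiber frame, which accounts for the ``up to parametrizations'' clause. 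To prove power-rate convergence, I would write the solution as $h_\infty(t) + k(y,t)$ and derive the linearized evolution of $k$. The linearization is parabolic on $S^1$ with $y$-dependent coefficients; non-degeneracy of $X$ (i.e.\ its distinct eigenvalues) produces a spectral gap for the linearized operator, which via weighted energy estimates yields $\|k\|(t) = O(t^{-\al})$ for some $\al > 0$.

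The main obstacle is case (ii): the flow is genuinely collapsing -- the $T^2$-fibers remain bounded while the base circle lengthens like $\sqrt{t}$ -- so Cheeger--Gromov smooth convergence at unit scale is unavailable and the analysis must be carried out at the level of the reduced PDE on $S^1$. The delicate point is the spectral analysis of the linearization around the Sol expanding soliton with twisted monodromy, since it is precisely the spectral gap determined by the hyperbolicity of $H$ that drives the quantitative rate of convergence.
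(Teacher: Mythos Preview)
Your overall strategy---reduce to the pair $(g_{yy},G)$ on $S^1$, get $|\rem|=O(t^{-1})$ by blowup, control the base length, then use stability around the limiting model---matches the paper's. But several of your steps, as written, do not go through.

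\textbf{The missing a priori estimate.} The engine behind both the curvature bound and the base-length bound is the scalar quantity
\[
\mathcal{E} \: = \: g^{yy}\,\Tr\bigl((G^{-1}G_{,y})^2\bigr),
\]
which satisfies $\partial_t\mathcal{E}\le\triangle\mathcal{E}-\tfrac12\mathcal{E}^2$ and hence $\mathcal{E}\le 2/t$ by the maximum principle. In a blowup at scale $Q_i\to\infty$ this forces $G_\infty$ to be constant, so the limit is a Ricci flow on a \emph{one}-dimensional base, which is automatically flat---contradiction. Your version invokes ``$\kappa$-noncollapsing on scales compatible with the fiber structure'' and a ``classification of $T^2$-invariant ancient three-dimensional solutions,'' neither of which is available in the form you need; the three-manifold is genuinely collapsing and Perelman's classification does not see $T^2$-invariant models. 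The same estimate gives $dL/dt\le L/(2t)$ and hence $L(t)=O(\sqrt t)$; ``controlling $g_{yy}$ directly'' without $\mathcal{E}\le 2/t$ is not enough.

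\textbf{Elliptic case: wrong reduction.} After passing to a cyclic cover you have a $U(1)^2$-invariant metric on $T^3$ of the form $g_{yy}(y)\,dy^2+G_{ij}(y)\,dx^idx^j$. This is \emph{not} a warped product in the sense of (\ref{1.1}) unless $G$ is diagonal: the off-diagonal $G_{12}(y)$ produces a nontrivial connection for the circle fibration, so Theorem~\ref{thm1.1}(iii) does not apply. The paper instead bounds the fiber diameters directly (the maximum principle on $\langle v,Gv\rangle$ for fixed $v\in\R^2$), combines this with $L(t)=O(\sqrt t)$ to get $\diam(N,h(t))=O(\sqrt t)$, and then runs the \emph{argument} of the $\chi=0$ case (Claim~\ref{claim2.16} plus the linear stability of flat metrics from \cite{GIK}) rather than quoting its statement.

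\textbf{Hyperbolic case: how do you get close to the soliton?} You jump to linearization around $h_\infty$, but nothing in your outline explains why the flow ever enters a neighborhood of the $\Sol$ soliton. The paper uses monotonicity of a modified $W_+$-functional (a one-dimensional version of Perelman's, adapted to this fibered setting) to extract subsequential convergence of rescalings to the soliton; only then does the local stability result of \cite{Knopf2} upgrade this to full power-rate convergence. A lower bound $L(t)\ge c\sqrt t$, coming from the fact that $G(\cdot,t)$ traces a curve in $P(2,\R)\cong\R\times H^2$ in a fixed nontrivial homotopy class determined by $H$, is also needed here and is absent from your sketch.
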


\begin{remark}
Theorem \ref{thm1.6}.(ii) says that $(N, h(\cdot))$ approaches a
locally homogeneous Ricci flow solution of $\Sol$-type.
The proof of Theorem \ref{thm1.6}.(ii) uses the monotonicity
of the modified $W_+$-functional from
\cite[Section 4.2.3]{L}, along with the local stability result of
\cite[Theorem 3]{Knopf2}.
\end{remark}

\begin{remark} \label{rem1.7}
Theorem \ref{thm1.6}.(i) includes the cases considered in
\cite{CIJ} and \cite[Section 11]{Ha}. Theorem
\ref{thm1.6}.(ii) includes the case considered in
\cite{Hamilton-Isenberg}. 
\end{remark}

The structure of the paper is as follows.  In Section \ref{sec2}
we prove Theorem \ref{thm1.1}. In Section \ref{sec3} we prove 
Theorem \ref{thm1.6}.  More detailed descriptions are at the
beginnings of the sections.

We are grateful to Jim Isenberg for introducing us to these
problems and for sharing his knowledge with us. 
We learned the estimates (\ref{2.19}), (\ref{2.24}) and 
(\ref{2.26}) from Jim.

In what follows we will use the Einstein summation convention freely.

\section{Warped products} \label{sec2}

In this section we prove Theorem \ref{thm1.1}. In Subsection \ref{subsec2.1} we
write the warped product Ricci flow equations and the corresponding
evolution equations for geometric quantities.  In Subsection
\ref{subsec2.2} we give {\em a priori} bounds using the maximum principle
and integral estimates. In Subsection \ref{subsec2.3} we deal with the
case $\chi(M) > 0$. In Subsection \ref{subsec2.4}
we prove that the Ricci flow exists for
$t \in [0, \infty)$ when $\chi(M) \le 0$.
In Subsection \ref{subsec2.5} we deal with the
case $\chi(M) = 0$.  In Subsection \ref{subsec2.6} we deal with the
case $\chi(M) < 0$.

\subsection{Warped product Ricci flow} \label{subsec2.1}

Let $N = M \times S^1$ be endowed with a warped product metric
\begin{equation} \label{2.1}
h = g + e^{2u} d\theta^2.
\end{equation}
Here $g$ is a Riemannian metric on $M$, $u$ is a smooth function on $M$ and
$\theta \in [0, 2\pi)$.
We use $i,j,k,l$\ for the indices on $M$.
Nonzero components of the curvature tensor of $(N, h)$
are
\begin{align} \label{2.2}
R^N_{ijkl} & = R^M_{ijkl}, \\
R^N_{\theta i \theta j} & = - e^{2u} \left[ \nabla_i \nabla_j u +
(\nabla_i u)(\nabla_j u) \right]. \notag
\end{align}
Its square norm is
\begin{align} \label{2.3}
\left| \rem^N_h \right|^2  = & 
\left| \rem^M_g \right|^2 + \\
& 2 g^{ii'} g^{jj'} \left[ \nabla_i \nabla_j u +
(\nabla_i u)(\nabla_j u) \right] \left[ \nabla_{i'} \nabla_{j'} u +
(\nabla_{i'} u)(\nabla_{j'} u) \right]. \notag
\end{align}
We will often think of $\left| \rem^N_h \right|(\cdot, t)$ as a function
on $M$ rather than $N$, since it pulls back from $M$, and write
$\left| \rem^N_h \right|(m,t)$.
Note that a sectional curvature bound on $(N, h)$ implies the same
sectional curvature bound on $(M, g)$.

The nonzero components of the Ricci tensor are
\begin{align} \label{2.4}
\ric^N_{ij} & = \ric^M_{ij}  - \nabla_i \nabla_j u -
(\nabla_i u) (\nabla_j u),  \\
\ric^N_{\theta \theta} & = - e^{2u} \left( \triangle u + |\nabla u|^2
\right).  \notag
\end{align}
The scalar curvature is
\begin{equation} \label{2.5}
R^N = R^M - 2 \triangle u - 2|\nabla u|^2.
\end{equation}

The Ricci flow equation
\begin{equation} \label{2.6}
\frac{dh}{dt} = - \, 2 \,\ric_{h(t)},
\end{equation}
on $N$ preserves the warped product structure.
In terms of $g$ and $u$, it becomes
\begin{align} \label{2.7} 
\frac{\partial g_{ij}}{\partial t}  &  =- 2 R_{ij}+
2 \nabla_i \nabla_j u + 2 (\nabla_i u) (\nabla_j u), \\
\frac{\partial u}{\partial t}  &  = \Delta u + |\nabla u|^2, \notag
\end{align}
where $\Delta=\Delta_{g(t)}$.
Adding the Lie derivative with respect to $- \nabla u$ to the
right-hand side of (\ref{2.7}) gives the modified equations
\begin{align} \label{2.8} 
\frac{\partial g_{ij}}{\partial t}  &  =- 2 R_{ij}+
2 (\nabla_i u) (\nabla_j u), \\
\frac{\partial u}{\partial t}  &  = \Delta u. \notag
\end{align}
Hereafter we will mainly work with the system (\ref{2.8}), since geometric
statements about (\ref{2.8}) will imply the corresponding statements
about the Ricci flow (\ref{2.7}).

Given $s > 0$ and a solution $(g(\cdot), u(\cdot))$ of (\ref{2.8}), 
we obtain another solution $(g_s(\cdot), u_s(\cdot))$  of (\ref{2.8}) by
putting 
\begin{align} \label{2.9}
g_s(t) & = \frac{1}{s} g(st), \\
u_s(t) & = u(st). \notag
\end{align}
Note that the rescaling in (\ref{2.9}) differs from the three-dimensional
rescaling of the Ricci flow $(N, h(\cdot))$, which would give
\begin{align} \label{2.10}
G_s(t) & = \frac{1}{s} g(st), \\
U_s(t) & = u(st) - \, \frac12 \ln(s). \notag
\end{align}
The rescaling in (\ref{2.9}) can be interpreted in the following way.
There is a $\Z$-cover $\widehat{N} = M \times \R$ of $N$ with a pullback Ricci
flow $\widehat{h}(\cdot)$. On $\widehat{N}$, the rescaling in (\ref{2.9})
amounts to looking at the Ricci flow solution
$\frac{1}{s} \phi_s^* \widehat{h}(st)$, where $\phi_s(m,r) = 
\left( m, \sqrt{s} r \right)$. We refer to 
\cite[Section 4]{L0} for further discussion of this point.

Put 
\begin{equation} \label{2.11}
h_s = g_s + e^{2u_s} d\theta^2.
\end{equation}
One sees from (\ref{2.3}) and (\ref{2.9}) that
\begin{equation} \label{2.12}
\left| \rem_{h_s} \right|^2 = s^2 \left| \rem_h \right|^2. 
\end{equation}
This can also be seen from
the discussion of the previous paragraph on
$\widehat{N}$ along with the fact that under the three-dimensional
rescaling of (\ref{2.10}), the norm square of the curvature gets multiplied
by $s^2$.

Following \cite{Li}, we introduce the tensor
\begin{equation} \label{2.13}
S_{ij} = R_{ij} - (\nabla_i u) (\nabla_j u)
\end{equation}
and its trace
\begin{equation} \label{2.14}
S = R - |\nabla u|^2.
\end{equation}
Putting $\alpha_n = 1$ in \cite[Lemma 3.2]{Li} gives
\begin{equation} \label{2.15}
\frac{\partial |\nabla u|^2}{\partial t} = \triangle |\nabla u|^2 -
2 |\Hess (u)|^2 - 2 |\nabla u|^4
\end{equation}
and
\begin{equation} \label{2.16}
\frac{\partial S}{\partial t} = \triangle S + 2 |S_{ij}|^2 + 2
|\triangle u|^2.
\end{equation}

Hereafter we specialize to the case when $M$ is closed,
connected and orientable, with $\dim(M) = 2$. Then (\ref{2.8})
simplifies to
\begin{align} \label{2.17} 
\frac{\partial g_{ij}}{\partial t}  &  =- R  g_{ij}+
2 (\nabla_i u) (\nabla_j u), \\
\frac{\partial u}{\partial t}  &  = \Delta u. \notag
\end{align}
Note that if $u$ is nonconstant then the conformal class of $g(t)$
is $t$-dependent.

\subsection{{\it A priori} bounds} \label{subsec2.2}

\subsubsection{Bounds from the maximum principle} \label{subsubsec2.2.1}

We take the Ricci flow to start at time zero.
Applying the maximum principle to the second equation in
(\ref{2.8}) shows that  there exist constants $C_1, C_2 > 0$ so that
for all $m \in M$ and all $t$ for which the flow exists,
\begin{equation} \label{2.18}
C_1 \le u(m, t) \le C_2.
\end{equation}

Next, applying the maximum principle to (\ref{2.15}) implies
(c.f. \cite[Lemma 5.6]{Li}) that
\begin{equation} \label{2.19}
|\nabla u|^2(\cdot,t) \le \frac{c}{2ct + 1},
\end{equation}
where $c = \max_{p \in M} |\nabla u|^2(p,0)$. 
Similarly, applying the maximum principle to (\ref{2.16}) implies that
\begin{equation} \label{2.20}
S(\cdot,t) \ge - \frac{1}{t}.
\end{equation}

In particular,
\begin{equation} \label{2.21}
R(\cdot,t) \ge - \frac{1}{t}.
\end{equation}

\subsubsection{Volume estimates} \label{subsubsec2.2.2}

Let
$V(t)$ denote the volume of $(M^{2},g(t))$. 
As
\begin{equation} \label{2.22}
\frac{d}{dt} dV_{g(t)} = 
\frac{1}{2} g^{ij} \frac{\partial g_{ij}}{\partial t}
dV_{g(t)} =  \left( -R + |\nabla u|^2 \right) dV_{g(t)},
\end{equation}
we have
\begin{equation} \label{2.23}
\frac{dV}{dt}  
 =-4\pi\cdot\chi(M)+\int_{M} |\nabla u|^2 \, dV_{g(t)},
\end{equation}
where $\chi(M)$ is the Euler characteristic of $M$.
Thus $\frac{dV}{dt} \geq-4\pi\cdot\chi(M)$
and so
\begin{equation} \label{2.24}
V(t)\geq-4\pi\cdot\chi(M)\cdot t+V(0).
\end{equation}

Using (\ref{2.19}) in addition, we can control $V(t)$ from above.
Namely, from (\ref{2.19}),
\begin{equation} \label{2.25}
\frac{dV}{dt}
\leq-4\pi\cdot\chi(M)+\frac{c}{2ct+1}\cdot V
\end{equation}
where $c$ is as in (\ref{2.19}).
Then by ODE comparison,
\begin{equation} \label{2.26}
V(t)\leq-\frac{4\pi}{c}\cdot\chi(M) \cdot (2ct+1)+\sqrt{2ct+1}\cdot
\left( \frac{4\pi
\cdot\chi(M)}{c}+V(0) \right).
\end{equation}

If $\chi(M) = 0$ then estimates (\ref{2.24}) and (\ref{2.26}) imply
that
\begin{equation} \label{2.27}
V(0) \le V(t) \le C(1 + \sqrt{t})
\end{equation}
for an appropriate constant $C < \infty$.

When $\chi(M) < 0$,
the linear term on the right-hand side of (\ref{2.26}) is
$- 8\pi \chi(M)t$. We would like to improve this to
$- 4 \pi \chi(M)t$, to bring it in line with (\ref{2.24}). 
Put $E(t) = \int_M |\nabla u|^2 \, dV_{g(t)}$.  

\begin{lem} \label{lem2.1}
We have
\begin{equation} \label{2.28}
\frac{d E}{dt} \le -\frac{E^2(t)}{V(t)}.
\end{equation}
\end{lem}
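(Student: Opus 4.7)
The plan is to differentiate $E(t)$ in time using the evolution equations already established, then invoke the Bochner formula in dimension two to cancel the sign-indefinite term, and finally apply Cauchy--Schwarz to bound $\int_M |\nabla u|^4\, dV$ from below by $E^2/V$.

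First I would differentiate $E$ using (\ref{2.15}) together with the volume-form evolution $\frac{d}{dt} dV_{g(t)} = (-R + |\nabla u|^2)\, dV_{g(t)}$ from (\ref{2.22}). The $\Delta |\nabla u|^2$ term integrates to zero on the closed surface $M$, so
\begin{equation*}
\frac{dE}{dt} = - \int_M \left( 2|\Hess(u)|^2 + |\nabla u|^4 + R\,|\nabla u|^2 \right) dV_{g(t)}.
\end{equation*}

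The $R|\nabla u|^2$ term has no definite sign, so the next step is to eliminate it via the two-dimensional Bochner identity. Since $\ric = \frac{R}{2}\, g$ in dimension two,
\begin{equation*}
\tfrac{1}{2} \Delta |\nabla u|^2 = |\Hess(u)|^2 + \langle \nabla \Delta u, \nabla u \rangle + \tfrac{R}{2} |\nabla u|^2.
\end{equation*}
Integrating over $M$ (the left-hand side vanishes) and using $\int_M \langle \nabla \Delta u, \nabla u \rangle\, dV = -\int_M (\Delta u)^2\, dV$, one obtains
\begin{equation*}
\int_M \left( 2|\Hess(u)|^2 + R\,|\nabla u|^2 \right) dV \: = \: 2 \int_M (\Delta u)^2\, dV \: \ge \: 0.
\end{equation*}
Substituting this back yields $\frac{dE}{dt} \le -\int_M |\nabla u|^4\, dV - 2\int_M (\Delta u)^2\, dV \le -\int_M |\nabla u|^4\, dV$.

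Finally, Cauchy--Schwarz applied to the pair $|\nabla u|^2$ and $1$ gives $E(t)^2 \le V(t) \int_M |\nabla u|^4\, dV$, so one concludes $\frac{dE}{dt} \le - E(t)^2/V(t)$, as required. There is no substantive obstacle here; the main observation is that in dimension two the Bochner formula converts the sign-indefinite combination $2|\Hess(u)|^2 + R|\nabla u|^2$ into a manifestly nonnegative quantity, after which Cauchy--Schwarz handles the remaining $|\nabla u|^4$ term.
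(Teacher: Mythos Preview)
Your proof is correct and in fact lands on exactly the same identity as the paper,
\[
\frac{dE}{dt} \: = \: -\int_M \bigl(|\nabla u|^4 + 2(\Delta u)^2\bigr)\, dV_{g(t)},
\]
after which Cauchy--Schwarz finishes as you indicate. (Your first ``$\le$'' after substitution is actually an equality, but this is harmless.)

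The route, however, is different. The paper does not differentiate $E$ directly; instead it uses (\ref{2.30}) to write $-\frac{dE}{dt} = \frac{d}{dt}\int_M S\, dV$, applies the evolution equation (\ref{2.16}) for $S$ together with (\ref{2.29}), and then exploits the two-dimensional identity $2|S_{ij}|^2 - S^2 = 2|S_{ij} - \tfrac12 S g_{ij}|^2$ along with $R_{ij} = \tfrac{R}{2} g_{ij}$ to reduce the tracefree part of $S_{ij}$ to $-\nabla_i u\,\nabla_j u + \tfrac12 |\nabla u|^2 g_{ij}$, whose squared norm is $\tfrac12 |\nabla u|^4$. Your approach bypasses the auxiliary tensor $S_{ij}$ entirely: you differentiate $\int_M |\nabla u|^2\, dV$ using (\ref{2.15}) and (\ref{2.22}), and then the integrated Bochner formula in dimension two does the work of eliminating the $R|\nabla u|^2$ term. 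Your argument is slightly more elementary in that it needs only the classical Bochner identity rather than the $S_{ij}$ machinery; the paper's version, on the other hand, fits naturally into the modified-flow framework of \cite{Li} already in use. Both hinge on the same two-dimensional miracle (that $\ric = \tfrac{R}{2} g$), just packaged differently.
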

\begin{proof}
First, as in (\ref{2.22}), the volume density changes by
\begin{equation} \label{2.29}
\frac{d}{dt} (dV_{g(t)}) = - S dV_{g(t)}.
\end{equation} 
Note that
\begin{equation} \label{2.30}
\int_M S \, dV_{g(t)} = \int_M (R - |\nabla u|^2) \, dV_{g(t)} =
4\pi \chi(M) - E(t).
\end{equation}
Then
\begin{align} \label{2.31}
-\frac{dE}{dt} =& \: \frac{d}{dt}\int_M S\, dV_{g(t)} = \int_M (\Delta S + 2|S_{ij}|^2 + 2(\Delta u)^2 - S^2)\, dV_{g(t)} \\
=& \: 2\int_M \left| S_{ij} - \frac{1}{2}S g_{ij} \right|^2\, 
dV_{g(t)} + 2\int_M (\Delta u)^2\, dV_{g(t)} \notag \\
=& \int_M \left( 2 \left| - \nabla_i u\nabla_j u +
\frac{1}{2}|\nabla u|^2 g_{ij} \right|^2 
+ 2(\Delta u)^2\right)\, dV_{g(t)} \notag \\
=& \int_M (|\nabla u|^4 + 2(\Delta u)^2)\, dV_{g(t)}  \notag \\
\ge& \int_M |\nabla u|^4\, dV_{g(t)} \notag \\
\ge& \: \frac{1}{V(t)}\left( \int_M |\nabla u|^2\, dV_{g(t)}\right)^2,
\notag
\end{align}
which proves the lemma.
\end{proof}

We now show that $E(t)$ decays logarithmically in time
when $\chi(M) < 0$.

\begin{cor} \label{cor2.2}
If $\chi(M) < 0$ then there exist constants $A, B > 0$ so that
\begin{equation} \label{2.32}
\int_M |\nabla u|^2\, dV_{g(t)} \le \frac{A}{1 + B\ln (t+1)}
\end{equation}
at all $t$ for which the flow exists.
\end{cor}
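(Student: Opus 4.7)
The plan is to combine Lemma \ref{lem2.1} with the volume upper bound (\ref{2.26}) and solve the resulting ODE inequality by separation of variables.

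First I would observe that since $\chi(M) < 0$, the bound (\ref{2.26}) gives a linear growth estimate: there is a constant $C_3 < \infty$ (depending on $\chi(M)$, $c$ and $V(0)$) such that
\begin{equation}
V(t) \le C_3 (1 + t)
\end{equation}
for all $t$ for which the flow exists. Plugging this into the conclusion of Lemma \ref{lem2.1} yields
\begin{equation}
\frac{dE}{dt} \le - \frac{E^2(t)}{C_3(1+t)}.
\end{equation}

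Next I would handle the trivial case and the main case separately. If $E(0) = 0$ then $u(\cdot, 0)$ is constant, and by the second equation of (\ref{2.17}) together with the maximum principle $u(\cdot, t)$ remains constant, so $E(t) \equiv 0$ and (\ref{2.32}) holds for free. Otherwise $E(0) > 0$, and (by continuity and the differential inequality) $E(t) > 0$ as long as the flow exists. Then I divide by $E^2$ to get
\begin{equation}
- \frac{d}{dt} \left( \frac{1}{E(t)} \right) \le - \frac{1}{C_3 (1+t)},
\end{equation}
i.e.
\begin{equation}
\frac{d}{dt} \left( \frac{1}{E(t)} \right) \ge \frac{1}{C_3(1+t)}.
\end{equation}

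Integrating from $0$ to $t$ gives
\begin{equation}
\frac{1}{E(t)} \ge \frac{1}{E(0)} + \frac{1}{C_3} \ln(1+t),
\end{equation}
so
\begin{equation}
E(t) \le \frac{E(0)}{1 + \frac{E(0)}{C_3} \ln(1+t)},
\end{equation}
which is exactly (\ref{2.32}) with $A = E(0)$ and $B = \frac{E(0)}{C_3}$. There is no real obstacle here, as everything reduces to a one-line ODE comparison; the only mild care needed is in extracting the linear volume bound from (\ref{2.26}) and in confirming that $E(t)$ remains strictly positive when $E(0) > 0$ so that the division by $E^2$ is legitimate.
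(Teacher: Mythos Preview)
Your proof is correct and follows essentially the same route as the paper: combine Lemma~\ref{lem2.1} with the linear volume upper bound from (\ref{2.26}) and apply ODE comparison. The paper's proof is just the one-line version of what you wrote out in detail.
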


\begin{proof}
By Lemma \ref{lem2.1} and the volume estimate (\ref{2.26}) we have
\begin{equation} \label{2.33}
\frac{d E}{dt} \le -\frac{E^2(t)}{V(t)} \le -\frac{E^2(t)}{c_1 t + c_2}
\end{equation}
for appropriate constants $c_1, c_2 > 0$.
The corollary follows from ODE comparison.
\end{proof}

\begin{cor} \label{cor2.3}
If $\chi(M) < 0$ then there is a function $\alpha : [0, \infty) \rightarrow
[0, \infty)$, with $\lim_{t \rightarrow \infty} \alpha(t) = 0$, such that
\begin{equation} \label{2.34}
\left| \frac{V(t)}{-4\pi\chi(M) \cdot (t+1)} - 1 \right| \le \alpha(t)
\end{equation} 
at all $t$ for which the flow exists.
\end{cor}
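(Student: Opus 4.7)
My plan is to read off the exact asymptotics of $V(t)$ by integrating the volume identity (\ref{2.23}) and using the logarithmic decay of $E(t)$ from Corollary \ref{cor2.2}.

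First, I would rewrite (\ref{2.23}) as
\begin{equation*}
V(t) \: = \: V(0) \: - \: 4\pi \chi(M) \, t \: + \: \int_0^t E(s) \, ds,
\end{equation*}
so that
\begin{equation*}
V(t) \: + \: 4\pi \chi(M) (t+1) \: = \: \bigl(V(0) + 4\pi \chi(M)\bigr) \: + \: \int_0^t E(s) \, ds.
\end{equation*}
Dividing by $-4\pi \chi(M)(t+1)$ (which is positive since $\chi(M) < 0$), the corollary reduces to showing that the right-hand side is $o(t)$ as $t \to \infty$.

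The constant term $V(0) + 4\pi \chi(M)$ is harmless. For the integral term, I would plug in Corollary \ref{cor2.2} to get
\begin{equation*}
\int_0^t E(s) \, ds \: \le \: \int_0^t \frac{A}{1 + B \ln(s+1)} \, ds.
\end{equation*}
A routine estimate (splitting the interval at $s = \sqrt{t}$, say, and using that $\ln(s+1) \ge \frac{1}{2} \ln(t+1)$ on $[\sqrt{t}, t]$) shows that the right-hand side is $O\!\left( \frac{t}{\ln t} \right)$, hence $o(t)$. Combined with the constant term, this gives
\begin{equation*}
\left| V(t) + 4\pi \chi(M)(t+1) \right| \: \le \: \beta(t) \cdot (t+1)
\end{equation*}
for some $\beta(t) \to 0$, which is exactly the desired inequality with $\alpha(t) = \beta(t)/(-4\pi\chi(M))$.

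There is no real obstacle here; the statement is essentially a corollary of Corollary \ref{cor2.2} combined with the identity (\ref{2.23}). The only mild point to be careful about is that the integral estimate gives only $o(t)$ rather than an explicit rate, which is why the conclusion of the corollary is phrased in terms of an unspecified $\alpha(t) \to 0$ rather than a concrete power or logarithmic decay.
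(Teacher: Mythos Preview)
Your proof is correct and is exactly the approach the paper takes: the paper's proof reads in its entirety ``This follows from (\ref{2.23}) and (\ref{2.32}),'' and you have simply spelled out the routine integration and $o(t)$ estimate that makes this precise.
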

\begin{proof}
This follows from (\ref{2.23}) and (\ref{2.32}).
\end{proof}

\begin{lem} \label{lem2.4}
The quantity $\frac{V(t)}{t}$ is 
nonincreasing along the flow (\ref{2.8}).
\end{lem}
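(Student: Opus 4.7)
The plan is to combine equation (\ref{2.23}) for $V'(t)$ with the fact from Lemma \ref{lem2.1} that $E(t) := \int_M |\nabla u|^2 \, dV_{g(t)}$ is nonincreasing in $t$. First I would compute
\begin{equation*}
\frac{d}{dt}\left( \frac{V(t)}{t} \right) \: = \: \frac{t V'(t) - V(t)}{t^2},
\end{equation*}
so the task reduces to showing $tV'(t) \le V(t)$. From (\ref{2.23}) one has $tV'(t) = -4\pi\chi(M)\,t + tE(t)$, while integrating (\ref{2.23}) from $0$ to $t$ gives
\begin{equation*}
V(t) \: = \: V(0) - 4\pi\chi(M)\,t + \int_0^t E(s)\, ds.
\end{equation*}
The $-4\pi\chi(M)\,t$ terms cancel, leaving
\begin{equation*}
t V'(t) - V(t) \: = \: tE(t) - V(0) - \int_0^t E(s)\, ds.
\end{equation*}

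Next I would invoke Lemma \ref{lem2.1}, which yields $E'(t) \le -E^2(t)/V(t) \le 0$, so $E$ is nonincreasing. Hence $E(s) \ge E(t)$ for all $s \in [0,t]$, which forces $\int_0^t E(s)\, ds \ge tE(t)$. Combined with $V(0) > 0$, this gives
\begin{equation*}
t V'(t) - V(t) \: \le \: -V(0) \: < \: 0,
\end{equation*}
which is in fact slightly stronger than the stated nonincreasing property.

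The main ``obstacle'' here is really just having Lemma \ref{lem2.1} available; everything else is elementary bookkeeping with the first variation of volume. As a sanity check, the quantity $V(t)/t$ is scale-invariant under the rescaling (\ref{2.9}) (since $V_{g_s}(t) = s^{-1} V_g(st)$), so its monotonicity along the flow is a geometrically natural statement. The argument also uses nothing about the sign of $\chi(M)$, since both (\ref{2.23}) and Lemma \ref{lem2.1} hold in all cases; in particular the lemma applies on the maximal time interval of existence regardless of which part of Theorem \ref{thm1.1} one is in.
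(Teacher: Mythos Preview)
Your proof is correct, but the paper proceeds differently. The paper's argument is a one-liner using the pointwise bound $S \ge -\frac{1}{t}$ from (\ref{2.20}): since $\frac{d}{dt}\,dV_{g(t)} = -S\,dV_{g(t)}$ by (\ref{2.29}), one computes directly
\[
\frac{d}{dt}\left(\frac{V(t)}{t}\right) = -\,\frac{1}{t}\int_M\left(S + \frac{1}{t}\right)\,dV_{g(t)} \le 0.
\]
So the paper relies only on the maximum-principle estimate (\ref{2.20}) for $S$, whereas you go through Lemma \ref{lem2.1} (the integral monotonicity of $E$) together with the Gauss--Bonnet identity (\ref{2.23}). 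Your route is slightly more roundabout, since Lemma \ref{lem2.1} itself requires the full computation (\ref{2.31}), while (\ref{2.20}) is immediate from (\ref{2.16}). On the other hand, your argument yields the quantitative refinement $\frac{d}{dt}(V/t) \le -V(0)/t^2$, which the paper's proof does not give directly; this is not used later, but it is a nice byproduct.
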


\begin{proof}
From (\ref{2.20}) and (\ref{2.29}),
\begin{equation} \label{2.35}
\frac{d}{dt} \left( \frac{V(t)}{t} \right) = - \, 
\frac{1}{t}\int_M\left(S + \frac{1}{t}\right)\, dV_{g(t)} \le 0.
\end{equation}
This proves the lemma.
\end{proof}

In the sequel we will distinguish between the 
cases $\chi(M) > 0$, $\chi(M) = 0$ and $\chi(M) < 0$.

\subsection{Positive Euler characteristic} \label{subsec2.3}

\begin{prop} \label{prop2.5}
If $\chi(M) > 0$ then there is a finite singularity time $T < \infty$.
For any $p \in N$, the pointed
smooth limit
$\lim_{t \rightarrow T^-} \left( N,p, \frac{1}{T-t} h(t) \right)$
exists and is the isometric product of $\R$ with a sphere
$S^2$ of constant curvature $\frac12$.
\end{prop}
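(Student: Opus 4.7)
The plan is to reduce to List's analysis of his modified Ricci flow, as indicated by Remark \ref{rem1.2}. Since $M$ is closed orientable with $\chi(M) > 0$ we have $M = S^2$. The system (\ref{2.8}) is the $\alpha = 1$ specialization of List's flow on a surface, and by Remark \ref{rem1.2} differs from the warped product Ricci flow (\ref{2.7}) only by a time-dependent Lie derivative; so geometric statements about (\ref{2.8}) translate immediately to corresponding statements about $h(\cdot)$ on $N$.

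The finite singularity time $T < \infty$ is an immediate consequence of the volume estimates of Subsection \ref{subsubsec2.2.2}. With $\chi(M) = 2$, the upper bound (\ref{2.26}) is eventually strictly negative in $t$, while $V(t) \ge 0$ as long as the flow exists; this forces a singular time $T < \infty$. The uniform bound (\ref{2.18}) on $u$ then shows that the circle fibers have length $2\pi e^u$ are bounded away from $0$ and $\infty$ uniformly in $t \in [0, T)$, establishing the first assertion of the proposition.

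The main step, and the hard part of the argument, is the Type I curvature bound $|\rem^N|(p, t) \le C/(T-t)$. I would establish this by importing List's entropy monotonicity together with the maximum principle applied to (\ref{2.15})--(\ref{2.16}), using in an essential way that $\chi(M) > 0$ to produce definite positivity. Once Type I is in hand, pass to the $\Z$-cover $\widehat{N} = M \times \R$ and parabolically rescale as in (\ref{2.10}) with $s = (T-t_0)^{-1}$. Under this rescaling
\begin{equation}
|\widetilde{\nabla} \widetilde{u}|^2_{\widetilde{g}} \: = \: (T-t_0) |\nabla u|^2_g \: \longrightarrow \: 0
\end{equation}
by (\ref{2.19}), so the warping function becomes spatially constant in the limit. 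Perelman's $\kappa$-noncollapsing and compactness for Type I singularities then produce a pointed smooth subsequential limit which is a gradient shrinking Ricci soliton on a three-manifold. Because the warped product structure persists under the rescaling and $\widetilde{u}$ is asymptotically constant along $M$, the limit splits isometrically as $\Sigma^2 \times \R$ with $\Sigma^2$ a two-dimensional shrinking soliton diffeomorphic to $S^2$. The only such soliton is the round sphere, and the normalization imposed by the $(T-t)^{-1}$ rescaling forces its sectional curvature to equal $\frac12$, yielding the claimed limit $\R \times S^2_{1/2}$. Uniqueness of the limit across subsequences then upgrades the convergence from subsequential to full convergence as $t \to T^-$.
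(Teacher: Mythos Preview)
Your argument for the finite singularity time and the fiber-length bounds is fine and matches the paper. The substantive gap is the Type~I bound $|\rem^N|(p,t)\le C/(T-t)$, which you \emph{assert} as ``the main step'' but do not actually prove. The justification you offer --- maximum principle on (\ref{2.15})--(\ref{2.16}) plus List's entropy --- does not produce an upper bound on curvature: (\ref{2.15}) controls $|\nabla u|^2$, and (\ref{2.16}) only gives the \emph{lower} bound (\ref{2.20}) on $S$. Entropy monotonicity yields $\kappa$-noncollapsing, not a Type~I estimate. Nothing in the available ingredients rules out a Type~II blowup \emph{a priori}, and the phrase ``using $\chi(M)>0$ to produce definite positivity'' does not point to any concrete inequality. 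Since every subsequent step (the Type~I compactness/shrinker classification you invoke) rests on this bound, the proposal as written does not close.

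The paper's route avoids this issue entirely. It never establishes Type~I beforehand; instead it rescales at points of maximal curvature and uses List's blowup theorem \cite[Theorem~7.9]{Li} (driven by the modified $W$-functional (\ref{2.37})) to extract a two-dimensional $\kappa$-solution $(M_\infty,g_\infty)$ with $u_\infty$ constant. The classification of orientable two-dimensional $\kappa$-solutions then forces $M_\infty=S^2$ round. This is purely a surface argument; the three-dimensional conclusion follows from (\ref{2.18}). Remark~\ref{rem2.6} explicitly notes that one \emph{could} instead analyze three-dimensional singularity models (closer in spirit to your approach via Perelman's machinery), but flags that as the less elementary path. Finally, the paper upgrades from subsequential to full convergence via the dynamical stability of the round shrinking sphere for the system (\ref{2.7}), rather than by an abstract uniqueness argument; this is what makes the limit exist without passing to subsequences.
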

\begin{proof}
If a smooth flow existed for all $t \in [0, \infty)$ then
equation (\ref{2.26}) would imply that $V(t) < 0$ for large $t$,
which is impossible. Thus there is a singularity at some time
$T < \infty$.

From \cite[Theorem 5.15]{Li}, $\lim_{t \rightarrow T^-} \max_{m \in M} 
\left| R \right|(m,t) = \infty$, where $R$ is the scalar curvature
of $M$. Let $\{t_k\}_{k=1}^\infty$ be a sequence of times so that for 
sufficiently large $k$, 
$t_k$ is the first time $t$ for which $\max_{m \in M} |R|(m,t) = k$.
Let $m_k \in M$ be such that $|R|(m_k, t_k) = k$.
From \cite[Theorem 7.9]{Li}, a subsequence of 
the rescaled pointed solutions 
\begin{equation} \label{2.36}
(M, m_k, g_k(t), u_k(t)) = 
(M, m_k, k g(t_k + t/k), u(t_k + t/k))
\end{equation} 
converges smoothly to a solution $(M_\infty, m_\infty, g_\infty(\cdot),
u_\infty(\cdot))$ defined for $t \in (-\infty, 0]$, where
$g_\infty(\cdot)$ is a $\kappa$-solution on $M_\infty$
in the sense of \cite[Section 11.1]{Perelman} and
$u_\infty(\cdot)$ is constant both spatially and temporally.
The proof of this statement uses a modified $W$-functional
which was introduced in \cite{Li} and \cite{L}, and which
becomes
\begin{equation} \label{2.37}
W(g,u,f,\tau) = \int_{M} \left[ \tau \left( |\nabla f|^2 + R
- |\nabla u|^2 \right)
+f-2 \right] (4\pi\tau)^{-1} e^{-f} dV_{g(t)}.
\end{equation}
in our three-dimensional warped product case.

The only $\kappa$-solution on an orientable surface is the
round shrinking $2$-sphere \cite[Corollary 40.1]{KL}. 
In particular, $\lim_{k \rightarrow \infty} (M, k g(t_k), u(t_k))
= (S^2, g_{S^2}, u_\infty)$, where $g_{S^2}$ has constant scalar
curvature one.  (Because $S^2$ is compact, we no longer have
to refer to basepoints. At this point our convergence is still
modulo diffeomorphisms.) Standard arguments show that the
system (\ref{2.7}) is stable around the solution given by the
round shrinking $S^2$ and constant $u$, with exponential convergence
for the normalized flow.
It follows that $\lim_{t \rightarrow T^-} 
\left( M, \frac{1}{T-t} g(t), u(t) \right) = 
(S^2, g_{S^2}, u_\infty)$, where the convergence
is now taken without diffeomorphisms.

In terms of the three-dimensional geometry, from (\ref{2.18}) the
fiber lengths are uniformly bounded above and below by positive
constants, up to time $T$. The three-dimensional pointed limit 
$\lim_{t \rightarrow T^-} \left( N, \frac{1}{T-t} h(t), p \right)$
is the isometric product of $\R$ with $\left( S^2, g_{S^2} \right)$.
This proves the proposition.
\end{proof}

\begin{remark} \label{rem2.6}
We could also prove Proposition \ref{prop2.5} by
looking at the three-dimensional singularity models with
a nowhere-vanishing Killing vector field.  Such a proof would
be less elementary, since it would use the results of
\cite[Section 11]{Perelman}.
\end{remark}

\begin{remark} \label{rem2.7}
The method of proof of Proposition \ref{prop2.5} works for 
finite-time singularities of warped products $M \times S^1$
if $M$ is a compact manifold of arbitrary dimension $n-1$.
Blowing up at points of maximal curvature, one obtains an
$(n-1)$-dimensional ancient solution which is $\kappa$-noncollapsed
at all scales.
\end{remark}

\subsection{Nonsingularity when $\chi(M) \le 0$} \label{subsec2.4}

\begin{prop} \label{prop2.8}
If $\chi(M^2) \le 0$ then the Ricci flow exists for
$t \in [0, \infty)$.
\end{prop}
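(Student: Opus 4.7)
The plan is to argue by contradiction. Suppose a first singular time $T<\infty$ exists. The a priori estimates of Subsection \ref{subsec2.2} already give $R\ge -\frac{1}{t}$ by (\ref{2.21}) and uniform $C^0,C^1$ bounds on $u$ by (\ref{2.18}) and (\ref{2.19}), so the only way the flow can fail to continue past $T$ is through an upper blow-up of the scalar curvature on $M$. By \cite[Theorem 5.15]{Li}, applied to the modified flow (\ref{2.8}), this must indeed occur: $\max_{m\in M}|R|(m,t)\to\infty$ as $t\to T^-$.

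I would then run the blow-up construction used in the proof of Proposition \ref{prop2.5}. Choose $t_k\to T^-$ and $m_k\in M$ with $K_k := |R|(m_k,t_k) = \max_{m\in M}|R|(m,t_k)\to\infty$, and form the parabolically rescaled pointed warped-product solutions $(M, m_k, K_k\,g(t_k+t/K_k), u(t_k+t/K_k))$, defined on time intervals $[-K_k t_k, 0]$. The modified $W$-functional (\ref{2.37}) supplies $\kappa$-noncollapsing on all scales, so \cite[Theorem 7.9]{Li} extracts a pointed smooth Cheeger--Gromov subsequential limit $(M_\infty, m_\infty, g_\infty(\cdot), u_\infty(\cdot))$ defined for $t\in(-\infty,0]$. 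Crucially, (\ref{2.19}) forces $|\nabla u|^2$ on the rescaled flows to tend to $0$, so $u_\infty$ is constant in both space and time, and the system (\ref{2.8}) then reduces in the limit to ordinary two-dimensional Ricci flow. Thus $g_\infty(\cdot)$ is a two-dimensional $\kappa$-solution, and by \cite[Corollary 40.1]{KL} it must be the round shrinking $S^2$; in particular $(M_\infty, g_\infty(0))$ is the compact round sphere.

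From here the contradiction is purely topological. Pointed Cheeger--Gromov convergence with a compact smooth limit $M_\infty\cong S^2$ means that for large $k$ one has a diffeomorphism from $M_\infty$ onto an open subset $U_k\subset M$, and since $M$ is a closed connected $2$-manifold of the same dimension, $U_k$ must exhaust $M$. Hence $M\cong S^2$, giving $\chi(M) = 2 > 0$ and contradicting our hypothesis $\chi(M)\le 0$. No finite singular time can exist, and the Ricci flow is immortal.

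The main obstacle, already handled by List, is ensuring that the coupled system (\ref{2.8}) admits the $\kappa$-noncollapsing and smooth Cheeger--Gromov compactness needed to produce an honest two-dimensional ancient $\kappa$-solution as the blow-up limit; once that machinery is invoked in the same way as in Proposition \ref{prop2.5}, the classification of two-dimensional $\kappa$-solutions and the soft topological comparison do the rest, and the distinction between the blow-up succeeding (as in the $\chi(M)>0$ case) and failing (as here) is precisely the sign of $\chi(M)$.
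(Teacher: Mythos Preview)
Your proposal is correct and follows essentially the same approach as the paper: the paper's proof is simply ``If not then there is a singularity at some time $T<\infty$. The same argument as in Subsection \ref{subsec2.3} gives $\lim_{k \rightarrow \infty} (M, k g(t_k), u(t_k)) = (S^2, g_{S^2}, u_\infty)$. In particular, $M$ is diffeomorphic to $S^2$, which contradicts our assumption.'' You have spelled out in detail exactly what ``the same argument as in Subsection \ref{subsec2.3}'' means---the blow-up at points of maximal curvature, the modified $W$-functional providing $\kappa$-noncollapsing, List's compactness theorem, the decay of $|\nabla u|$ under rescaling, and the classification of orientable two-dimensional $\kappa$-solutions---and then drawn the same topological contradiction.
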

\begin{proof}
If not then there is a singularity at some time $T< \infty$.
The same argument as in Subsection \ref{subsec2.3} gives
$\lim_{k \rightarrow \infty} (M, k g(t_k), u(t_k))
= (S^2, g_{S^2}, u_\infty)$. In particular, $M$ is diffeomorphic
to $S^2$, which contradicts our assumption.
\end{proof}

\subsection{Vanishing Euler characteristic} \label{subsec2.5}

\begin{prop} \label{prop2.9}
If $\chi(M) = 0$ then $\lim_{t \rightarrow \infty} h(t)$
exists and is a flat metric on $T^3$. The convergence is
exponentially fast.
\end{prop}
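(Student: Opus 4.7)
The plan is to combine the a priori estimates from Subsection \ref{subsec2.2} with the long-time structure result of \cite{L} and the linear stability of flat three-tori from \cite{GIK}, following the outline sketched in Remark \ref{rem1.4}. Since $\chi(M)=0$ and $M$ is a closed orientable surface, $M$ is a two-torus and $N$ is diffeomorphic to a three-torus.

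\emph{Stage 1: Diameter control.} From (\ref{2.27}) together with the uniform bound (\ref{2.18}) on $u$, one has $\vol(N, h(t)) = O(\sqrt{t})$. The next step is to show that the length of the shortest noncontractible loop on $(M,g(t))$ is nondecreasing in $t$. The evolution equation (\ref{2.17}) reads $\partial_t g_{ij} = -R g_{ij} + 2 \nabla_i u \nabla_j u$, and the second summand is positive semidefinite. For a closed geodesic $\gamma$ representing the systole, the first variation of length therefore reduces to controlling $-\int_\gamma R \, ds$ from below, and I would handle this using the pointwise bound $R \ge -1/t$ from (\ref{2.21}) together with the Gauss-Bonnet identity $\int_M R \, dV_{g(t)} = 0$ valid for $\chi(M)=0$. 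A covering-type argument then combines the resulting systole lower bound with the volume upper bound to yield $\diam(M, g(t)) = O(\sqrt{t})$, and since the fiber length $e^u$ is uniformly bounded, also $\diam(N, h(t)) = O(\sqrt{t})$.

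\emph{Stage 2: Curvature decay.} Feeding the diameter bound into the main result of \cite{L}, one obtains $\left(\max_{p \in N} |\rem^N|(p,t)\right) \cdot \diam^2(N, h(t)) = o(t)$. Rescaling the time-$t$ metric to unit diameter, the rescaled curvature becomes arbitrarily small for $t$ large.

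\emph{Stage 3: Stability of flat metrics.} The rescaled metrics on $N$ may be collapsed in the injectivity-radius sense, so I would pass to a finite cover $\widehat N$ (still a torus) chosen large enough to produce a uniform lower bound on the injectivity radius of the pullback metrics. On $\widehat N$ the rescaled flows converge in $C^k$ to a flat metric, so for $t$ sufficiently large the pullback Ricci flow $\widehat h(\cdot)$ lies in a small $C^k$-neighborhood of the space of flat metrics. The linear stability result of \cite{GIK} then applies and yields exponentially fast convergence of $\widehat h(t)$ to a flat metric on $\widehat N$. Since the deck-transformation group preserves the flow, this convergence descends to give the corresponding exponentially fast convergence of $h(t)$ to a flat metric on $N \cong T^3$.

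The main obstacle I expect is the systole monotonicity in Stage 1. The favorable term $2 \nabla_i u \nabla_j u$ causes no trouble, but handling the $-R g_{ij}$ term requires a careful balance between the pointwise lower bound $R \ge -1/t$ and the vanishing of $\int_M R$, exploited along the particular minimizing geodesic. Once this is in place, the remaining stages are relatively direct applications of the cited results, with the finite-cover step in Stage 3 serving only to unwrap whatever small loops may have formed.
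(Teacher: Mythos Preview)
Your overall architecture mirrors the paper's, but there are two genuine gaps.

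\textbf{The order of curvature and diameter is reversed.} The result of \cite{L} that you invoke in Stage~2 takes as hypotheses \emph{both} $\max_{p\in N}|\rem^N|(p,t)=O(t^{-1})$ and $\diam(N,h(t))=O(\sqrt t)$; it does not produce the curvature bound from the diameter bound alone. In the paper the $O(t^{-1})$ curvature bound is established first and independently (Proposition~\ref{prop2.10}) by a blowup argument: if $t\cdot\max|\rem^N|$ were unbounded one would extract an eternal two-dimensional limit, necessarily a cigar soliton with $\int R\,dV=4\pi$, and this contradicts Gauss--Bonnet on $M=T^2$ via the volume estimate~(\ref{2.27}) and the lower bound~(\ref{2.21}). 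Your plan has no substitute for this step. Moreover the curvature bound is needed \emph{inside} Stage~1 as well: going from a systole lower bound and a volume upper bound to a diameter upper bound is not a soft covering argument. The paper's version (Claim~\ref{claim2.15}) asserts $L(g)\cdot\diam(M,g)\le C\cdot\vol(M,g)$ only under the smallness hypothesis $(\max_M|R|)\cdot\vol(M,g)\le\epsilon$, and that is exactly where Proposition~\ref{prop2.10} enters.

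\textbf{The systole monotonicity argument is not right.} The pointwise bound $R\ge -1/t$ together with $\int_M R\,dV=0$ does not control the line integral $\int_\gamma R\,ds$ along a single closed geodesic; the first only gives $L'\ge -L/(2t)$, and the second is an integral over $M$, not over $\gamma$. The correct mechanism (Claim~\ref{claim2.14}) is that a shortest noncontractible geodesic is \emph{stable}: the second variation with respect to a parallel unit normal field yields $-\int_\gamma K\,ds\ge 0$, hence $\int_\gamma R\,ds\le 0$, and together with the nonnegative term $\langle\gamma',\nabla u\rangle^2$ this gives $L'(t)\ge 0$. Once Proposition~\ref{prop2.10} and this corrected systole step are in place, your Stages~2 and~3 do match the paper: Claim~\ref{claim2.15} gives the diameter bound, \cite{L} gives the $o(t)$ estimate, and the finite-cover plus stability argument via \cite{GIK} (Claim~\ref{claim2.16}) finishes.
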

\begin{proof}
We first show in the following two propositions that if $\chi(M) = 0$ then
$\max_{p \in N} \left| \rem^N \right| (p,t)  = O(t^{-1})$ and 
$\diam(N, h(t)) = O(\sqrt{t})$.
Here $\rem^N$ denotes the three-dimensional sectional curvatures.

\begin{prop} \label{prop2.10}
If $\chi(M) = 0$ then there is a $C <\infty$ so that for all
$t \in [0, \infty)$, we have 
\begin{equation} \label{2.38}
t\cdot \max_{p \in N} \left| \rem^N \right| (p,t) \le C.
\end{equation}
\end{prop}

\begin{proof}
If not, $\limsup_{t\to\infty} t\cdot \max_{m \in M} 
\left| \rem^N \right| (m,t) = \infty$. 
(Since the function $\left| \rem^N \right| (\cdot, t)$ pulls back 
from $M$, we can think of it as a function on $M$.)
We perform an analog of Hamilton's pointpicking algorithm for a
type IIb Ricci flow solution; see
\cite[Chapter 8.2.1.3]{Chow-Lu-Ni}.
Namely, take any sequence $\{T_i\}_{i=1}^\infty$ with
$\lim_{i \rightarrow \infty} T_i = \infty$ and 
let $(m_i, t_i) \in M \times
[0, T_i]$ be such that
\begin{equation} \label{2.39}
t_i (T_i - t_i) \left| \rem^N \right| (m_i,t_i) = 
\sup_{(m, t) \in M \times [0,T_i]} 
t(T_i - t) \left| \rem^N \right| (m,t).
\end{equation}
Dilate the flow in space and time by 
$Q_i = \left| \rem^N \right| (m_i,t_i)$ in the following way :
\begin{equation} \label{2.40}
g_i(t) = Q_i\cdot g(t_i+tQ_i^{-1}), \qquad u_i(t) = u(t_i + tQ_i^{-1}).
\end{equation}
Then $(g_i(\cdot), u_i(\cdot))$ satisfies (\ref{2.8}) on a time interval
$(A_i, \Omega_i)$, with $\lim_i A_i  = - \infty$ and 
$\lim_i \Omega_i = \infty$. Put
\begin{equation} \label{2.41}
h_i(t) = g_i(t) + e^{2u_i} d\theta^2.
\end{equation}
By construction,
$\left| \rem^N_{h_i} \right| (m_i, 0) = 1$ and 
after redefining $A_i$ and $\Omega_i$,
there is a sequence
$\{\gamma_i\}_{i=1}^\infty$ with $\lim_{i \rightarrow \infty} \gamma_i = 1$
such that 
\begin{equation} \label{2.42}
\max_{(m,t) \in M \times [A_i, \Omega_i]} 
\left| \rem^N_{h_i}  \right| (m, t)\le \gamma_i.
\end{equation}

The curvature bound on $N$ implies
a curvature bound on $M$ which, along with the {\em a priori} bounds on
$u_i$ and $|\nabla u_i|$, implies higher derivative bounds on
$\rem^M(g_i)$ and $u_i$ \cite[Theorem 5.12]{Li}. We would now like to
take a convergent subsequence of the pointed flows
$\{(M \times [A_i, \Omega_i], (m_i, 0), g_i(\cdot), u_i(\cdot))
\}_{i=1}^\infty$ 
to obtain an eternal solution
$(M_\infty \times \R, (m_\infty, 0), g_\infty(\cdot), u_\infty(\cdot))$ of
(\ref{2.8}),
where $\R$ denotes a time interval.
To do so, we need a uniform positive lower bound on the
injectivity radius at $(m_i, 0)$. If the two-dimensional manifolds
$(M, g_i(0))$
were positively curved
then such a bound would be automatic.  Since we don't know
that 
$(M, g_i(0))$
is positively curved, we argue differently.

\begin{lem} \label{lem2.11}
There is some $\epsilon > 0$ such that for all $i$, 
the injectivity radius of $(M, g_i(0))$ at $m_i$ is bounded
below by $\epsilon$.
\end{lem}
\begin{proof}
Suppose that the lemma is false.
Then after passsing to a subsequence, we can assume that
$\lim_{i \rightarrow \infty} \injrad_{g_i(0)} (m_i) = 0$.
Passing to a further subsequence, we can assume that
$\{(M \times [A_i, \Omega_i], (m_i, 0), g_i(\cdot), u_i(\cdot))\}_{i=1}^\infty$
converges to a solution of (\ref{2.8}) on an
\'etale groupoid.  For information about the use of \'etale
groupoids in Ricci flow, we refer to \cite{L0} and \cite{L}. The
upshot is that after passing to a subsequence, we have smooth
pointed convergence 
to an eternal solution 
$(M_\infty \times \R, (m_\infty, 0), g_\infty(\cdot), u_\infty(\cdot))$
of (\ref{2.8}), where $M_\infty$ is a two-dimensional \'etale groupoid.
Furthermore, this solution has uniformly bounded curvature by (\ref{2.42}),
and for each $t \in \R$, $(M_\infty, g_\infty(t))$ is a complete
closed effective Riemannian groupoid.

By assumption, $\lim_{i \rightarrow \infty} t_i Q_i = \infty$. 
As
\begin{equation} \label{2.43}
\max_{m \in M} {|\nabla u_i|^2_{g_i}(m,t)} = Q_i^{-1} 
\max_{m \in M} |\nabla u|^2_g
\left( m, t_i + t Q_i^{-1} \right),
\end{equation}
when combined with (\ref{2.19}) we conclude that
$\nabla u_\infty = 0$ on any time slice. Then $u_\infty$ is constant
spatially and temporally, and
$g_\infty$ is just a Ricci flow on 
$M_\infty$. As it is an eternal solution, it has nonnegative curvature,
with positive curvature at $(m_\infty, 0)$. 

We know that $(M_\infty, m_\infty, g_\infty(t))$ is a bounded curvature limit of
$\{(M, m_i, g_i(t))\}_{i=1}^\infty$. The Riemannian groupoid 
$(M_\infty, g_\infty(t))$ has a locally constant sheaf of finite
dimensional Lie algebras ${\mathfrak g}$ 
which act as germs of Killing vector fields
on the unit space $M_\infty^{(0)}$. Because of the bounded
curvature assumption, these
local Killing vector fields do not have a point of common vanishing;
see, for example, \cite[Theorem 5.1]{Rong}.

As $M_\infty^{(0)}$ is two-dimensional, and we are in the
collapsing situation, the only possibilities for
${\mathfrak g}$ are $\R^2$ and $\R$. If ${\mathfrak g} = \R^2$ then
$(M_\infty, g_\infty(t))$ is flat, which is a contradiction.

Suppose that ${\mathfrak g} = \R$. Locally, $g_\infty(t)$ can be
written as $dx^2 + f^2(x,t) dy^2$. Since its curvature is
$K = - \frac{f''}{f}$, the function $f$ is concave. Fixing
the value of $f$ at a single point in $M_\infty^{(0)}$,
the function $f$ pulls back from a function on the orbit
space ${\mathcal O}$ of $M_\infty$, 
which 
in our case is a one-dimensional orbifold
\cite[Proposition 5.2]{L}.
If ${\mathcal O}$ is
a circle then we immediately get a contradiction, since
the concave function $f$ must be constant, but this contradicts
the fact that $(M_\infty, g_\infty)$ has nonzero curvature at
$(m_\infty, 0)$.
If ${\mathcal O}$ is an interval orbifold then we can pass to a
double cover and argue as before. If ${\mathcal O}$ is $\R$ then
the positive concave function $f$ must be constant, which again
contradicts the fact that $(M_\infty, g_\infty)$ has nonzero curvature at
$(m_\infty, 0)$. 
If ${\mathcal O}$ is $[0, \infty)$ then we can pass to a double
cover and argue as before.

This proves the lemma.
\end{proof}

We can now take a convergent subsequence of
the pointed flows
$\{(M \times [A_i, \Omega_i], (m_i, 0), g_i(\cdot), u_i(\cdot))
\}_{i=1}^\infty$ 
to obtain an eternal solution
$(M_\infty \times \R, (m_\infty, 0), g_\infty(\cdot), u_\infty(\cdot))$ of
(\ref{2.8}). From (\ref{2.19}) and (\ref{2.43}), $u_\infty$ is constant.
Thus we have a nonflat 
eternal Ricci flow solution on the two-dimensional manifold
$M_\infty$, with uniformly bounded curvature 
(from (\ref{2.42})) and complete time
slices. In particular, 
$\int_{M_\infty} R_{g_\infty(0)} \, dV_{g_\infty(0)} > 0$.
Although we won't really need it,
$(M_\infty, g_\infty(\cdot))$ must be the cigar soliton,
as follows from the fact that the spacetime supremum of $R$ is 
achieved at a point, along with a differential Harnack inequality
\cite[Theorem 9.4]{Chow-Lu-Ni}.  Hence
\begin{equation} \label{2.44}
\int_{M_\infty} R_{g_\infty(0)} \, dV_{g_\infty(0)} = 4\pi.
\end{equation}

For large $i$, there is a bounded domain $S_i \subset (M, g(t_i))$ 
which, after rescaling, is
almost isometric to a large piece of the time-zero slice of the cigar soliton.
Then for large $i$, 
\begin{equation} \label{2.45}
\int_{S_i} R_{g(t_i)} \, dV_{g(t_i)} \ge 3\pi.
\end{equation}
On the other hand,
\begin{equation} \label{2.46}
\int_{M - S_i} R_{g(t_i)} \, dV_{g_i(t)} \ge - \frac{1}{t_i} \vol_{g(t_i)}(M - S_i) \ge
- \, \frac{V(t_i)}{t_i}.
\end{equation}
Adding (\ref{2.45}) and (\ref{2.46}), and using (\ref{2.27}), we see that for
large $i$,
\begin{equation} \label{2.47}
\int_M R_{g(t_i)} \, dV_{g(t_i)} \ge 2\pi.
\end{equation}
This contradicts the assumption that $\chi(M) = 0$ and proves
Proposition \ref{prop2.10}
\end{proof}

\begin{prop} \label{prop2.12}
$\diam(N,h(t)) = O( \sqrt{t})$.
\end{prop}
\begin{proof}

We will use the following general result about Riemannian submersions.
\begin{lem} \label{lem2.13}
If $\pi : (N,h) \rightarrow (M, g)$ is a Riemannian submersion, with $N$
compact and connected, then
\begin{equation} \label{2.48}
\diam(M,g) \le \diam(N,h) \le \diam(M,g) + 
2 \max_{m \in M} \diam(\pi^{-1}(m)),
\end{equation}
where $\diam(\pi^{-1}(m))$ is the intrinsic diameter of $\pi^{-1}(m)$.
\end{lem}
\begin{proof}

Given $m_1, m_2 \in M$, we have
\begin{equation} \label{2.49}
d_M(m_1, m_2) = d_N(\pi^{-1}(m_1), \pi^{-1}(m_2)).
\end{equation}
It immediately follows that $\diam(M, g) \le \diam(N, h)$.

Given $p_1, p_2 \in N$,
put $m_1 = \pi(p_1)$ and $m_2 = \pi(p_2)$.
Then 
\begin{equation} \label{2.50}
d_N(p_1, p_2) \le \diam(\pi^{-1}(m_1)) + d_N(\pi^{-1}(m_1), \pi^{-1}(m_2)) +
\diam(\pi^{-1}(m_2)).
\end{equation}
The lemma follows.
\end{proof}

From (\ref{2.18}) and Lemma \ref{lem2.13}, 
$\diam(N,h(t)) \le \diam(M,g(t)) + 2 e^{C_2}$. Thus it suffices to show
that $\diam(M,g(t)) = O(\sqrt{t})$.
 
Let $L(t)$ be the length (with respect to $g(t)$) of a shortest noncontractible closed geodesic $\gamma_t$ on $M$.
We parametrize $\gamma_t$ by an arclength parameter $s$.
We will first show the following claim, which is an analog of
\cite[Theorem 12.1]{Ha}.

\begin{claim} \label{claim2.14}
$L(t)$ is nondecreasing in $t$.
\end{claim}

\begin{proof}
Given the curve $\gamma_t$ at time $t > 0$, 
we obtain an upper bound on $L(t-\Delta t)$
by considering the length of 
the same curve $\gamma_t$ at the earlier time $t - \Delta t$. 
Then
\begin{align} \label{2.51}
\liminf_{\Delta t \rightarrow 0} \frac{L(t) - L(t-\Delta t)}{\Delta t} & \ge
\frac12 \int_0^{L(t)} 
\frac{\partial g}{\partial t}(\gamma_t', \gamma_t') \, ds \\
& = \int_0^{L(t)} \left[ - \frac12
R(\gamma_t(s)) + \langle \gamma_t', \nabla u \rangle^2 \right] \, ds.
\notag
\end{align}
As $\gamma_t$ is stable, applying the second variation formula with respect to a parallel
normal field along $\gamma_t$ gives
\begin{equation} \label{2.52}
- \int_0^{L(t)} R(\gamma_t(s)) \, ds \ge 0.
\end{equation}
The claim follows.
\end{proof}

Since $\chi(M) = 0$, equations (\ref{2.27}) and (\ref{2.38}) imply that
\begin{equation} \label{2.53}
\lim_{t \rightarrow \infty} \left[ \left( \max_{m \in M} |R|(m,t) \right)
\cdot V(t) \right] = 0.
\end{equation}

The next claim is purely geometric and has nothing to do with flows.

\begin{claim} \label{claim2.15}
There exist $\epsilon > 0$ and $C < \infty$ so that for every metric 
$g$ on $M = T^2$ with 
$\left( \max_{m \in M} |R_g|(m) \right) \cdot \vol(M,g) \le \epsilon$, we have 
\begin{equation} \label{2.54}
L(g) \cdot \diam(M,g) \le C\cdot \vol(M,g),
\end{equation}
where $L(g)$ is the length of the shortest noncontractible closed geodesic.
\end{claim}

\begin{proof}
Suppose first that the metric $g$ is flat. Let $\gamma$ be a shortest
closed geodesic and let $\widehat{M}$ be the corresponding cyclic cover
of $M$. Then $\widehat{M} = S^1 \times \R$, where the circle has length
$L(\gamma) = L(g)$.
A generator of the covering group $\Z$ acts on $\widehat{M}$ by
translation in the $\R$-direction, by some distance $L^\prime$, along
with rotation around $S^1$ by some angle $\theta \in [- \, \pi, \pi]$.
For any $\widehat{m} \in \widehat{M}$, we have
\begin{equation} \label{2.55}
d_{\widehat{M}}
( \widehat{m}, g \widehat{m}) = 
\sqrt{
(L^\prime)^2 + \left( \frac{\theta L}{2\pi} \right)^2
}.
\end{equation}
This is the length of a closed geodesic on $M$, so we must have
\begin{equation} \label{2.56}
\sqrt{(L^\prime)^2 + \left( \frac{\theta L}{2\pi} \right)^2} \ge L.
\end{equation}
This implies that $L^\prime \ge \frac{\sqrt{3}}{2} L$.
Now $\diam(M,g) \le \frac12 (L + L^\prime)$
and $\vol(M, g) = L L^\prime$, so the claim holds 
in this case with $C=2$.
Hence we can assume that $g$ is nonflat.

Suppose that the claim is false.  Then there is a sequence $\epsilon_i \rightarrow 0$ so that
for each $i$, there is a metric $g_i$ on $M$ with 
\begin{equation} \label{2.57}
\left( \max_{m \in M} |R_{g_i}|(m) \right) \cdot 
\vol(M, g_i) \le \epsilon_i
\end{equation}
but 
\begin{equation} \label{2.58}
L(g_i) \cdot \diam(M, g_i) > i \cdot \vol(M, g_i).
\end{equation}
Rescale $g_i$ so that $\max_{m \in M} |R_{g_i}|(m) = 1$. Then
$\vol(M, g_i) \le \epsilon_i$.

Let $\delta << 1$ be a new parameter.  Suppose first that for an infinite number of $i$,
the pointed Gromov-Hausdorff distance from $\left( M, m, g_i \right)$ to a point is
greater than $\delta$ for all $m \in M$, i.e.
$\diam(M, g_i) > \delta$. After relabelling the sequence 
$\{(M, g_i)\}_{i=1}^\infty$, we can assume that for all $i$ and all $m \in M$, 
the pointed Gromov-Hausdorff distance from $\left( M, m, g_i \right)$ to a point is
greater than $\delta$. Since $\lim_{i \rightarrow \infty} \vol(M, g_i) = 0$,
there is a sequence $\{c_i\}_{i=1}^\infty$ with 
$\lim_{i \rightarrow \infty} c_i = 0$ so that for any $m \in M$, the pointed closed metric
ball $\left( \overline{B \left( m, \frac{\delta}{10} \right)}, 
m, g_i \right)$ is $c_i$-close
to $\left( \left[ - \frac{\delta}{10}, \frac{\delta}{10} \right], 0 \right)$
in the pointed Gromov-Hausdorff topology.

Fix $\delta^\prime << 1$.
From \cite{Cheeger-Fukaya-Gromov}, for large $i$ there is a diffeomorphism
$\phi_i$ from $(M, g_i)$ to $(S^1 \times S^1, g^\prime_i)$, where
\begin{itemize}
\item $g^\prime_i$ is a warped product metric $dx^2 + f_i^2(x) dy^2$,
\item $\max_{x \in S^1} f_i(x) \rightarrow 0$ as $i \rightarrow \infty$, and 
\item $\phi_i$ is a $e^{\delta^\prime}$-biLipschitz map.
\end{itemize}
More precisely, we are using the fact that the results of 
\cite{Cheeger-Fukaya-Gromov} hold in a localized sense, i.e.
without an upper diameter bound; see
\cite[Section 2]{Cheeger-Tian} for discussion. The paper
\cite{Cheeger-Fukaya-Gromov} gives a biLipschitz approximation of
$(M, g_i)$ by a Riemannian nilbundle with affine holonomy; see also
\cite{Fukaya}. In
the present case such a Riemannian nilbundle is a Riemannian
submersion $S^1 \times S^1 \rightarrow S^1$.

For the metric $g^\prime_i$, let $A_i$ be the length of the circle base.
We know that $A_i \ge \frac{\delta}{10}$. 
To get an upper bound on $L(g_i)$, we locate a shortest circle fiber of
the fiber bundle $S^1 \times S^1 \rightarrow S^1$
and take its preimage under $\phi_i$. This gives a 
noncontractible closed curve in $M$, so
\begin{equation} \label{2.59}
L(g_i) \le e^{\delta^\prime} \min_{x \in S^1} f_i(x).
\end{equation}
Next, using Lemma \ref{lem2.13},
 the diameter of $(S^1 \times S^1, g^\prime_i)$ is bounded above by
$A_i + 2 \max_{x \in S^1} f_i(x)$, so
\begin{equation} \label{2.60}
\diam(M, g_i) \le e^{\delta^\prime}(A_i + 2 \max_{x \in S^1} f_i(x)).
\end{equation}
The volume of $(S^1 \times S^1, g^\prime_i)$ is bounded below by
$A_i$ times the length of the smallest circle fiber, so
\begin{equation} \label{2.61}
\vol(M, g_i) \ge e^{-2 \delta^\prime} A_i \min_{x \in S^1} f_i(x).
\end{equation}
For large $i$, equations (\ref{2.59}), (\ref{2.60}) and (\ref{2.61}) contradict
(\ref{2.58}).

Thus we can assume that for all but a finite number of $i$,
there is some point $m_i$ so that the pointed Gromov-Hausdorff
distance from $(M, m_i, g_i)$ to a point is at most $\delta$.
We now rescale $(M, g_i)$ to a metric $(M, \widehat{g}_i)$ with 
diameter one.  After this rescaling,
$\max_{m \in M} \left| R_{\widehat{g}_i} \right|(m) \le \delta^2$. 
Suppose that $\liminf_{i \rightarrow \infty} \vol(M, \widehat{g}_i) = 0$.
After passing to a subsequence, we can assume that
$\lim_{i \rightarrow \infty} \vol(M, \widehat{g}_i) = 0$. As in the
argument of the preceding paragraphs, for large $i$ there is a
biLipschitz approximation of $(M, \widehat{g}_i)$ by a warped product
metric on a fiber bundle $S^1 \times S^1 \rightarrow S^1$,
whose base has diameter close to one, and we obtain a contradiction to 
(\ref{2.58}). 

Thus there is some $v_0 > 0$ so that for all $i$, we have 
$\vol(M, \widehat{g}_i) \ge v_0$.
Cheeger compactness now gives a contradiction to (\ref{2.58}). This proves
the claim.
\end{proof}

To conclude the proof of Proposition \ref{prop2.12} we argue as follows.  
By (\ref{2.27}), $V(t) = O(\sqrt{t})$. By (\ref{2.2}) and
Proposition \ref{prop2.10},
$\max_{m \in M} |R^M|(m,t) = O(t^{-1})$. Thus for large $t$,
we can apply Claim \ref{claim2.15} to conclude that
\begin{equation} \label{2.62}
\diam(M, g(t)) \le C \frac{V(t)}{L(t)} \le C \frac{V(t)}{L(0)},
\end{equation}
where Claim \ref{claim2.14} is used in the last inequality.  Proposition 
\ref{prop2.12} follows.
\end{proof}

By \cite[Theorem 1.2.1]{L}, Propositions \ref{prop2.10} and 
\ref{prop2.12} imply that
$\max_{m \in M} |\rem^N_n|(m,t) = o(t^{-1})$.
Thus 
\begin{equation} \label{2.63}
\lim_{t \rightarrow \infty}
\left[
\left( \max_{m \in M} |\rem^N|(m,t) \right)
\cdot \diam^2(N, h(t)) \right] = 0.
\end{equation}

For some given $t$, 
we rescale $(N, h(t))$ so that it has diameter one.
We now wish to apply the local stability of flat metrics on $T^3$ under
the Ricci flow.  For this, we need the following lemma, which is 
purely geometric.

\begin{claim} \label{claim2.16}
There are a compact subset $K$ of the moduli
space of flat metrics on $T^3$ and
a function $\epsilon^\prime : [0, \infty) \rightarrow
[0, \infty)$, with $\lim_{\epsilon \rightarrow 0} \epsilon'(\epsilon) = 0$, 
so that the following holds.  Suppose that
$h$ is a Riemannian metric on $N = T^3$ with $\diam(N, h) = 1$ and
$\max_{p \in N} |\rem_h|(p) \le \epsilon$. Then there is a finite cover
$\widehat{N}$ of $N$ so that $(\widehat{N}, \widehat{h})$ has 
distance at most $\epsilon^\prime(\epsilon)$ in the $C^1$-topology from 
an element of $K$.
\end{claim}
\begin{proof}
Given $i_0, D_1 > 0$ and $D_2 < \infty$, let $K_{i_0,D_1, D_2}$ 
denote the isometry classes of
flat Riemannian metrics on $T^3$ with diameter in $[D_1, D_2]$ and injectivity
radius bounded below by $i_0$. For any $\alpha \in \Z^+$, the set
$K_{i_0, D_1, D_2}$ is compact in the $C^\alpha$-topology. We will take
$K = K_{i_0, D_1, D_2}$, where the parameters $i_0$, $D_1$ and $D_2$ will be
determined in the proof.

Given $K$, suppose that the claim is false. Then there is 
some $\epsilon^\prime > 0$ along with a
sequence $\{(N_i, h_i)\}_{i=1}^\infty$ of Riemannian $3$-tori
with $\diam(N_i, h_i) = 1$ and
$\max_{p \in N_i} |\rem_{g_i}|(p) \le \frac{1}{i}$, but with
the property that for no $i$ is there a finite cover
$\left( \widehat{N}_i, \widehat{h}_i \right)$ of $(N_i, h_i)$ 
with distance less than $\epsilon^\prime$ in the $C^1$-topology
from an element of $K$.
Let $\widetilde{N}_i$
be the universal cover of $N_i$.
Pick $p_i \in N_i$ and let $\widetilde{p}_i$
be a lift to $\widetilde{N}_i$. 
As $i \rightarrow \infty$,
$(\widetilde{N}_i, \widetilde{p}_i)$ converges to
$(\R^3, 0)$ in the pointed $C^1$-topology; this follows from
a uniform lower bound on the injectivity radius at
$\widetilde{p}_i$ \cite[(3.2)]{Rong} and $C^1$-convergence
results. Let $r_{1,i} \in
\pi_1(N_i, p_i)$ be a nontrivial element which minimizes
$d(\widetilde{p}_i, r_{1,i} \widetilde{p}_i)$. 
Let $r_{2,i} \in \pi_1(N_i, p_i) - \langle r_{1,i}
\rangle$ minimize $d(\widetilde{p}_i, r_{2,i} \widetilde{p}_i)$.
Let $r_{3,i} \in \pi_1(N_i, p_i) - \langle r_{1,i}, r_{2,i}
\rangle$ minimize $d(\widetilde{p}_i, r_{3,i} \widetilde{p}_i)$.
By the diameter condition, for large $i$ we have
$d(\widetilde{p}_i, r_{3,i} \widetilde{p}_i)
\le 10$. 

Let $\gamma_{1,i}, \gamma_{2,i}, \gamma_{3,i}$ be minimizing
geodesics from $\widetilde{p}$ to $r_{1,i} \widetilde{p},
r_{2,i} \widetilde{p}, r_{3,i} \widetilde{p}$.
There is a universal constant $c > 0$ so that for each $i$,
pairs from
$\{\gamma^\prime_{1,i}(0), \gamma^\prime_{2,i}(0), \gamma^\prime_{3,i}(0) \}$
form an angle at $\widetilde{p}$ which is at least $c$
\cite[Section 2.3]{Gromov}.
Let $k_{1,i}$ and $k_{2,i}$ be positive integers so that
$k_{1,i} d(\widetilde{p}_i, r_{1,i} \widetilde{p}_i)$ and
$k_{2,i} d(\widetilde{p}_2, r_{2,i} \widetilde{p}_i)$ lie in
$\left[ \frac{1}{10}, 10 \right]$. 
Let $\widehat{N}_i$ be the cover of $N_i$ so that
$\pi_1(\widehat{N}_i, \widehat{p}_i)$ has generators
$r_{1,i}^{k_{1,i}}, r_{2,i}^{k_{2,i}}, r_{3,i}$.
After passing to
a subsequence, we can assume that as $i \rightarrow \infty$, with
respect to the approximations of $\widetilde{N}_i$ by $\R^3$, the
triples
$r_{1,i}^{k_{1,i}} \widetilde{p}_i, r_{2,i}^{k_{2,i}}\widetilde{p}_i, 
r_{3,i}\widetilde{p}_i$
converge to vectors $v_1, v_2, v_3$ in $\R^3$ with
lengths in $\left[ \frac{1}{10}, 10 \right]$ and
mutual angles at least $c$. Then 
$\lim_{i \rightarrow \infty}
\left( \widehat{N}_i, \widehat{h}_i \right) = \R^3/(\Z v_1 + \Z v_2 + \Z v_3)$
in the $C^1$-topology. We can find $i_0, D_1 > 0$ and $D_2 < \infty$, 
computed in terms of
$c$, so that $\R^3/(\Z v_1 + \Z v_2 + \Z v_3)$ has diameter in 
$[D_1, D_2]$ and injectivity
radius bounded below by $i_0$. This contradicts the 
properties of the sequence $\{(N_i, h_i)\}_{i=1}^\infty$.
The claim follows.
\end{proof}

By (\ref{2.63}), for any $\epsilon^\prime > 0$, if $t$ is large enough then we
can apply Claim \ref{claim2.16} to $(N, h(t))$. 
Using the higher derivative
curvature estimates coming from the Ricci flow and applying a
similar argument as in the proof of Claim \ref{claim2.16}, 
for any $\alpha \in \Z^+$
we can also say that
there is a compact subset 
$K_\alpha$
of the moduli space of
flat metrics on $T^3$ so that for any $\epsilon^\prime > 0$,
if $t$ is large enough then a finite cover 
$\left( \widehat{N}, \widehat{h}(t) \right)$ is $\epsilon^\prime$-close
in the 
$C^\alpha$-topology
to an element of 
$K_\alpha$. Taking $\alpha$ large,
we can now apply
\cite[Theorem 3.7]{GIK} to conclude that as $t \rightarrow \infty$,
$\left( \widehat{N}, \widehat{h}(t) \right)$
(or more precisely the solution of (\ref{2.7})) 
converges to a flat metric on $T^3$. Furthermore, the convergence is
exponentially fast.
(Strictly speaking, \cite{GIK} considers the evolution of a Riemannian
metric $h$ which is sufficiently close to a fixed flat metric,
but the arguments clearly extend to the setting of closeness to
a compact set 
$K_\alpha$
of flat metrics, since there will be uniform
control on the constants.) Because of the equivariance
of Ricci flow under isometries, the same is true for
$(N, h(t))$.

This proves Proposition \ref{prop2.9}.
\end{proof}

\subsection{Negative Euler characteristic} \label{subsec2.6}

By Proposition \ref{prop2.8}, the flow (\ref{2.8}) or, equivalently, (\ref{2.7}) exists forever.
We start with the following proposition.

\begin{prop} \label{prop2.17}
We have $\max_{p \in N} |\rem^N|(p,t) = O(t^{-1})$. 
\end{prop}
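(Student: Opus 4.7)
The plan is to mirror the argument of Proposition \ref{prop2.10}, replacing the weak volume estimate (\ref{2.27}) by the sharp asymptotic volume estimate of Corollary \ref{cor2.3}. The conclusion that one extracts a cigar soliton limit is the same; the difference lies in how Gauss-Bonnet is used to generate a contradiction, and this is where Corollary \ref{cor2.3} is essential.

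First, suppose for contradiction that $\limsup_{t \to \infty} t \cdot \max_{m \in M} |\rem^N|(m,t) = \infty$. Perform Hamilton's type IIb pointpicking exactly as in (\ref{2.39})--(\ref{2.42}): choose $T_i \to \infty$ and $(m_i,t_i) \in M \times [0,T_i]$ maximizing $t(T_i - t)|\rem^N|(m,t)$, set $Q_i = |\rem^N|(m_i,t_i)$, and rescale via $g_i(t) = Q_i g(t_i + tQ_i^{-1})$, $u_i(t) = u(t_i + tQ_i^{-1})$. This gives $t_i Q_i \to \infty$ and a uniform curvature bound on an expanding interval $[A_i,\Omega_i]$. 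Combining (\ref{2.19}) with $t_i Q_i \to \infty$ forces $|\nabla u_i|^2_{g_i} \to 0$ uniformly.

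Second, the injectivity radius argument of Lemma \ref{lem2.11} carries over verbatim: its proof did not use $\chi(M) = 0$, only bounded curvature, $\nabla u_\infty \equiv 0$, and the classification of possible Lie algebras of local Killing fields on a two-dimensional \'etale groupoid with nonzero curvature at some point. One therefore extracts a pointed limit $(M_\infty, m_\infty, g_\infty(\cdot))$ which is a nonflat eternal Ricci flow on a two-dimensional \'etale groupoid with uniformly bounded nonnegative curvature, complete time slices, and $u_\infty$ constant. As in the proof of Proposition \ref{prop2.10}, a differential Harnack argument identifies $(M_\infty, g_\infty(\cdot))$ with the cigar soliton, yielding $\int_{M_\infty} R_{g_\infty(0)}\, dV_{g_\infty(0)} = 4\pi$.

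Third, transfer this to $(M,g(t_i))$ and contradict Gauss-Bonnet. For large $i$, a bounded domain $S_i \subset (M,g(t_i))$ is almost isometric to a large piece of the cigar, so
\begin{equation}
\int_{S_i} R_{g(t_i)}\, dV_{g(t_i)} \ge 3\pi,
\end{equation}
while $R \ge -1/t_i$ from (\ref{2.21}) gives $\int_{M \setminus S_i} R_{g(t_i)}\, dV_{g(t_i)} \ge -V(t_i)/t_i$. Gauss-Bonnet then forces
\begin{equation}
4\pi \chi(M) \: = \: \int_M R_{g(t_i)}\, dV_{g(t_i)} \: \ge \: 3\pi - \frac{V(t_i)}{t_i}.
\end{equation}
Now invoke Corollary \ref{cor2.3}: $V(t_i)/t_i \to -4\pi\chi(M)$. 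Passing to the limit yields $4\pi\chi(M) \ge 3\pi + 4\pi\chi(M)$, i.e.\ $0 \ge 3\pi$, a contradiction.

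The main obstacle is the matching between the Gauss-Bonnet target $4\pi\chi(M)$ and the error from the scalar curvature lower bound, which was automatic in the $\chi(M)=0$ case because $V(t) = O(\sqrt{t})$. For $\chi(M) < 0$ the volume grows linearly, so one genuinely needs the sharp asymptotic $V(t)/t \to -4\pi\chi(M)$ from Corollary \ref{cor2.3} (which in turn rested on Lemma \ref{lem2.1}); a weaker linear upper bound would leave a slack that could accommodate the cigar and preclude the contradiction.
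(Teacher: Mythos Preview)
Your proposal is correct and follows essentially the same approach as the paper's own proof: the same type IIb pointpicking, the same injectivity-radius lemma (which indeed does not use $\chi(M)=0$), the same cigar limit with $\int R = 4\pi$, and the same Gauss--Bonnet contradiction powered by Corollary \ref{cor2.3}. One minor slip: after Lemma \ref{lem2.11} the limit $M_\infty$ is a genuine manifold, not merely an \'etale groupoid---the groupoid appears only inside the proof of Lemma \ref{lem2.11} to rule out collapsing---but this does not affect the argument.
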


\begin{proof}
The argument is similar to that in the proof of Proposition \ref{prop2.10}. 
If the statement were not true,  i.e. if 
$\limsup_{t\to\infty} t\cdot\max_{m \in M} |\rem^N|(m,t)  = \infty$, 
then we begin the argument exactly the same as in the proof of 
Proposition \ref{prop2.10} to take a limit of rescalings at times $\{t_i\}_{i=1}^\infty$.
This limit is an eternal solution of the form 
$h_{\infty}(t) = g_{\infty}(t) + C d\theta^2$, where 
$C$ is a constant and $g_{\infty}(t)$ is a nonflat eternal solution to the 
Ricci flow on a two-dimensional manifold $M_{\infty}$, with
complete time slices and nonnegative bounded curvature. 
As in the proof of Proposition \ref{prop2.10}, $(M_\infty, g_\infty(\cdot))$
must be a cigar soliton and
\begin{equation} \label{2.64}
\int_{M_{\infty}} R_{g_\infty(0)}\, dV_{g_\infty(0)} = 4\pi.
\end{equation}

For large $i$, there is a bounded domain $S_i \subset 
\left( M, g(t_i) \right)$ which, after rescaling, is
almost isometric to a large region in the cigar soliton.
Then 
\begin{equation} \label{2.65}
\int_{S_i} R_{g(t_i)}\, dV_{g(t_i)} \ge 3\pi.
\end{equation}

Furthermore, by (\ref{2.21}) and Corollary \ref{cor2.3}, we have
\begin{equation} \label{2.66}
\int_{M - S_i} R_{g(t_i)} \, dV_{g(t_i)} \ge
- \frac{1}{t_i} \vol_{g(t_i)}(M - S_i) \ge
- \frac{V(t_i)}{t_i} \ge 4 \pi \chi(M) \cdot (1 + o(i^0)).
\end{equation}
Adding (\ref{2.65}) and (\ref{2.66}), and taking $i \rightarrow \infty$,
contradicts the Gauss-Bonnet theorem for $M$.
\end{proof}

Put $\widehat{g}(t) = \frac{g(t)}{t}$. Let $\widehat{\nabla}$ denote
the corresponding Levi-Civita connection.

\begin{claim} \label{claim2.18}
There exist  $i_0 > 0$  and $t_0 > 0$ so that for every $t\ge t_0$, 
there is a point $m_t\in M$ where the injectivity radius
satisfies
$\injrad_{\widehat{g}(t)} (m_t) \ge i_0$.
\end{claim}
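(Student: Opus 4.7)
The plan is to prove Claim \ref{claim2.18} by contradiction. Suppose instead that there is a sequence $t_i\to\infty$ along which $\epsilon_i := \max_{m\in M}\injrad_{\widehat g(t_i)}(m)$ tends to zero. From Proposition \ref{prop2.17}, together with the fact that under $\widehat g(t)=g(t)/t$ the sectional curvatures rescale by a factor of $t$, the metrics $\widehat g(t_i)$ have sectional curvatures that are bounded uniformly in $i$. From Corollary \ref{cor2.3} one has $\vol(M,\widehat g(t_i)) = V(t_i)/t_i \to -4\pi\chi(M)>0$, so $M$ cannot shrink to a point in the Gromov-Hausdorff sense, and the total area of any Gromov-Hausdorff limit remains positive.

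I would next pick arbitrary basepoints $m_i\in M$ and extract a pointed limit of $(M,m_i,\widehat g(t_i))$ using the \'etale groupoid framework of \cite{L0},\cite{L} already exploited in the proof of Lemma \ref{lem2.11}. After passing to a subsequence, the limit is a two-dimensional complete effective Riemannian groupoid $(M_\infty,m_\infty,g_\infty)$ of uniformly bounded curvature. The hypothesis $\epsilon_i\to 0$ forces the unit space $M_\infty^{(0)}$ to carry a nontrivial sheaf of Killing Lie algebras $\mathfrak g$ whose germs have no common zero, by \cite[Theorem 5.1]{Rong}. In dimension two the only possibilities are $\mathfrak g=\R$ or $\mathfrak g=\R^2$, exactly the dichotomy that appeared in the proof of Lemma \ref{lem2.11}.

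It remains to derive a topological contradiction with the standing hypothesis $\chi(M)<0$. Because $\epsilon_i\to 0$ uniformly in $m\in M$ (not just at a single basepoint), the conclusion of the previous paragraph applies to every choice of basepoint, so the Cheeger-Fukaya-Gromov theory of \cite{Cheeger-Fukaya-Gromov} converts the pointwise local Killing structure into a global pure N-structure on $M$ for large $i$, with small orbits of dimension one or two. For a closed orientable surface such a structure is incompatible with $\chi(M)<0$: either $M$ is a single two-dimensional orbit (hence diffeomorphic to $T^2$), or $M$ fibers as an $S^1$-bundle (or orbifold bundle) over a one-dimensional orbifold, and both alternatives force $\chi(M)\ge 0$.

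The main obstacle is the passage from the pointwise groupoid limit to a genuinely global collapsing structure on $M$, i.e.\ verifying that the sheaf $\mathfrak g$ assembles into a coherent N-structure across all of $M$ rather than only within a neighborhood of each chosen basepoint. This is precisely what the uniform bound $\injrad_{\widehat g(t_i)}\le\epsilon_i$, together with the curvature bound from Proposition \ref{prop2.17}, provides as input to \cite{Cheeger-Fukaya-Gromov}. Once the global structure is in place the surface classification closes the argument.
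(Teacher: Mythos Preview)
Your argument is correct and follows the same underlying idea as the paper: uniform collapse with bounded curvature forces a collapsing structure on $M$ whose existence contradicts $\chi(M)<0$. However, the paper's execution is considerably more economical. It bypasses the \'etale groupoid limit entirely and simply observes that if the claim fails, then for every $\epsilon>0$ there is a time $t_\epsilon$ with $\injrad_{\widehat g(t_\epsilon)}(m)<\epsilon$ for all $m\in M$; since the curvature of $\widehat g(t)$ is uniformly bounded (Proposition~\ref{prop2.17}), Cheeger--Gromov \cite{Cheeger-Gromov} immediately yields an F-structure on $M$, hence $\chi(M)=0$, a contradiction. Your detour through pointed groupoid limits and the local Killing sheaf is unnecessary here because you are not trying to identify a limit flow---you only need the global topological obstruction, which the F-structure (or N-structure) theorem gives directly from the uniform curvature and injectivity-radius hypotheses. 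The volume observation from Corollary~\ref{cor2.3} is also not needed for this step.
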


\begin{proof}
By (\ref{2.2}) and
Proposition \ref{prop2.17}, for $t \ge 1$ the metrics $(M, \widehat{g}(t))$
have uniformly bounded curvature.
If the claim were not true then for every $\epsilon > 0$, there
would be some $t_\epsilon \ge 1$ 
so that $\injrad_{\widehat{g}(t_\epsilon)}(m) < \epsilon$
for all $m \in M$. Then $M$ would have an F-structure and hence
a vanishing Euler characteristic
\cite{Cheeger-Gromov}. This is a contradiction.
\end{proof}

\begin{prop} \label{prop2.19}
For any $i_0 > 0$, 
define the $i_0$-thick part of $(M, \widehat{g}(t))$ by
\begin{equation} \label{2.67}
X_{i_0}(t) = 
\{m \in M \, : \, \injrad_{\widehat{g}(t)}(m) \ge i_0\}.
\end{equation}
Then
\begin{equation} \label{2.68}
\lim_{t \rightarrow \infty} \sup_{x \in X_{i_0}(t)} |R_{\widehat{g}(t)}(x) + 1|
= 0
\end{equation}
and
\begin{equation} \label{2.69}
\lim_{t \rightarrow \infty} \sup_{x \in X_{i_0}(t)} 
|\widehat{\nabla} u|_{\widehat{g}(t)}(x)
= 0.
\end{equation}
\end{prop}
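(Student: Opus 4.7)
The plan is to argue by contradiction using pointed Cheeger--Gromov compactness at the rescaled time slice $\widehat{g}(t)$. Suppose the proposition fails: then there exist $\eps > 0$, a sequence $t_i \to \infty$, and points $x_i \in X_{i_0}(t_i)$ such that for each $i$ either $|R_{\widehat{g}(t_i)}(x_i) + 1| \ge \eps$ or $|\widehat{\nabla} u|_{\widehat{g}(t_i)}(x_i) \ge \eps$.

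First I would extract a pointed smooth limit of $(M, x_i, \widehat{g}(t_i), u(\cdot, t_i))$. By Proposition~\ref{prop2.17}, the sectional curvatures of $\widehat{g}(t_i)$ are uniformly bounded. By the definition of $X_{i_0}(t_i)$, $\injrad_{\widehat{g}(t_i)}(x_i) \ge i_0 > 0$. The function $u$ is uniformly bounded by (\ref{2.18}), and $|\widehat{\nabla} u|_{\widehat{g}(t_i)}^2 = t_i |\nabla u|_{g(t_i)}^2$ is uniformly bounded by (\ref{2.19}). Combined with higher derivative estimates for the modified flow (\ref{2.8}) (the same bounds invoked in the proof of Proposition~\ref{prop2.10}), Hamilton's compactness theorem yields, along a subsequence, a pointed smooth limit $(M_\infty, x_\infty, g_\infty, u_\infty)$ where $M_\infty$ is a complete connected two-manifold.

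Next I would show $R_{g_\infty} \equiv -1$ and $\nabla u_\infty \equiv 0$. The bound $R_{g(t)} \ge -1/t$ from (\ref{2.21}) rescales to $R_{\widehat{g}(t)} \ge -1$, so $R_{g_\infty} \ge -1$ on $M_\infty$. By the Gauss--Bonnet theorem together with the scale invariance of $\int R\, dV$ in dimension two,
\begin{equation*}
\int_M R_{\widehat{g}(t_i)} \, dV_{\widehat{g}(t_i)} \: = \: \int_M R_{g(t_i)} \, dV_{g(t_i)} \: = \: 4\pi\chi(M),
\end{equation*}
while Corollary~\ref{cor2.3} gives $\vol(M,\widehat{g}(t_i)) = V(t_i)/t_i \to -4\pi\chi(M)$. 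Hence
\begin{equation*}
\int_M \bigl(R_{\widehat{g}(t_i)} + 1\bigr) \, dV_{\widehat{g}(t_i)} \: \to \: 0.
\end{equation*}
The integrand being nonnegative, passing to the limit on any compact $K \subset M_\infty$ yields $\int_K (R_{g_\infty} + 1)\, dV_{g_\infty} = 0$, so by continuity $R_{g_\infty} \equiv -1$. Similarly, Corollary~\ref{cor2.2} gives $\int_M |\widehat{\nabla} u|^2_{\widehat{g}(t_i)} \, dV_{\widehat{g}(t_i)} = \int_M |\nabla u|^2_{g(t_i)} \, dV_{g(t_i)} \to 0$, whence $\nabla u_\infty \equiv 0$ on $M_\infty$.

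Smooth convergence at $x_\infty$ then gives $R_{g_\infty}(x_\infty) = \lim_i R_{\widehat{g}(t_i)}(x_i) = -1$ and $|\nabla u_\infty|_{g_\infty}(x_\infty) = \lim_i |\widehat{\nabla} u|_{\widehat{g}(t_i)}(x_i) = 0$, contradicting the choice of $(x_i, t_i)$. The main technical point is securing smooth (not merely $C^{1,\alpha}$) pointed convergence of both $g$ and $u$, so that the scalar curvature and the gradient of $u$ pass to the limit at $x_\infty$; this requires the higher derivative estimates for the modified flow (\ref{2.8}) together with the standard propagation of the injectivity radius bound under bounded curvature.
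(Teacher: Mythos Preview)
Your argument is correct, and in fact a bit more direct than the paper's.  The paper proceeds by taking a pointed limit of the rescaled \emph{flows} $\left(M, m_i, t_i^{-1} g(t_i t), u(t_i t)\right)$, first using Corollary~\ref{cor2.2} to force $u_\infty$ constant, and then deducing $R_{g_\infty(t)} = -1/t$ from the monotonicity of $V(t)/t$ via the time--integrated identity
\[
\int_a^b \int_{M_\infty} \left( R_{g_\infty(t)} + \tfrac{1}{t} \right) dV_{g_\infty(t)} \, \tfrac{dt}{t} \le \lim_{i \to \infty} \left( \tfrac{V(t_i a)}{t_i a} - \tfrac{V(t_i b)}{t_i b} \right) = 0.
\]
You instead take a limit of the fixed rescaled \emph{time slices} $(M, x_i, \widehat{g}(t_i), u(\cdot, t_i))$ and replace the monotonicity argument by the Gauss--Bonnet identity $\int_M R_{\widehat{g}(t_i)}\,dV_{\widehat{g}(t_i)} = 4\pi\chi(M)$ combined with the volume asymptotic from Corollary~\ref{cor2.3}, yielding $\int_M (R_{\widehat{g}(t_i)} + 1)\,dV_{\widehat{g}(t_i)} \to 0$ with nonnegative integrand.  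This is cleaner here: it handles the scalar curvature and the gradient of $u$ by two parallel $L^1$-to-pointwise arguments, and it does not require first knowing $\nabla u_\infty = 0$ to run the curvature step.  The paper's route, by contrast, sits naturally inside the flow-limit framework and uses only the monotone quantity $V(t)/t$, which would survive in situations where a topological identity like Gauss--Bonnet is unavailable.  Your closing remark about needing genuine $C^\infty$ (not just $C^{1,\alpha}$) convergence, supplied by List's higher-derivative estimates for the modified flow, is exactly the right technical caveat.
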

\begin{proof}
Suppose that the proposition is not true.  Then there are some
$i_0, \epsilon > 0$ along with sequences $t_i \rightarrow \infty$
and $\{m_i\}_{i=1}^\infty$ so that for each $i$,
$\injrad_{\widehat{g}(t_i)}(m_i) \ge i_0$ and
either
$|R_{\widehat{g}(t_i)}(m_i) + 1| \ge \epsilon$ or
$|\widehat{\nabla} u|_{\widehat{g}(t_i)}(m_i) \ge \epsilon$.

By Hamilton's compactness theorem and the derivative estimates
on $u$ from \cite[Section 5]{Li}, after passing to a subsequence
we can assume that there is a smooth pointed limit of flows 
\begin{equation} \label{2.70}
\lim_{i \rightarrow  \infty} \left( M, m_i, \frac{1}{t_i} 
g(t_i t), u(t_i t) \right) =
\left( M_\infty, m_\infty, 
g_\infty(t), u_\infty(t) \right).
\end{equation}
For any bounded domain $S \subset M_\infty$, using Corollary \ref{cor2.2} we have
that at any time $a$,
\begin{equation} \label{2.71}
\int_S |\nabla u_\infty|_{g_\infty(a)}^2 \, dV_{g_\infty(a)} \le
\limsup_{i \rightarrow \infty} \int_M 
|\nabla u(t_i a)|_{g(t_i a)}^2 \, dV_{g(t_i a)} = 0.
\end{equation}
Thus $u_\infty$ is spatially constant at time $a$.
Because $a$ is arbitrary, and $u_\infty$ satisfies the time-dependent
heat equation,
it follows that $u_\infty$ is also temporally constant. In particular,
\begin{equation} \label{2.72}
0 = |\nabla u_\infty|_{g_\infty(1)}(m_\infty) = 
\lim_{i \rightarrow \infty} |\widehat{\nabla} u|_{\widehat{g}(t_i)}(m_i).
\end{equation}

Now $R_{g_\infty(t)} + \frac{1}{t} \ge 0$.
Given $0 < a < b < \infty$, equations (\ref{2.35}) and (\ref{2.72}),
along with Corollary \ref{cor2.3}, give
\begin{align} \label{2.73}
& \int_a^b \int_{M_\infty} \left( R_{g_\infty(t)} + \frac{1}{t} \right)
dV_{g_\infty(t)} \, \frac{dt}{t}  = \\
& \int_a^b \int_{M_\infty} \left( R_{g_\infty(t)} - |\nabla u_\infty|^2(t) 
+ \frac{1}{t} \right)
dV_{g_\infty(t)} \, \frac{dt}{t} \notag \\
& \le
\lim_{i \rightarrow \infty}
\int_{t_i a}^{t_i b} \int_{M} \left( R_{g(t)} - |\nabla u|^2(t) + 
\frac{1}{t} \right)
dV_{g(t)} \, \frac{dt}{t} \notag \\
& = \lim_{i \rightarrow \infty} \left(
\frac{V(t_i a)}{t_i a} - \frac{V(t_i b)}{t_i b}
\right) = 0. \notag
\end{align}
Since $a$ and $b$ were arbitrary, we obtain
$R_{g_\infty(t)}(m) = - \frac{1}{t}$ for all $m \in M_\infty$ and
$t \in (0, \infty)$.
In particular,
\begin{equation} \label{2.74}
-1 = R_{g_\infty(1)}(m_\infty) = \lim_{i \rightarrow \infty} 
R_{\widehat{g}(t_i)}(m_i).
\end{equation}
Equations (\ref{2.72}) and (\ref{2.74}) together contradict our assumptions
about $\{t_i\}_{i=1}^\infty$ and $\{m_i\}_{i=1}^\infty$, thereby
proving the proposition.
\end{proof}

\begin{remark} \label{rem2.20}
In the case $\chi(M) < 0$ one could hope for a bound
$\diam(M, g(t)) = O(\sqrt{t})$. With such a bound
one could conclude that in Proposition \ref{prop2.19}, there is some
$i_0 > 0$ so that for large $t$, $X_{i_0}(t)$ is all of $M$.
Without such a bound, one could imagine that as
$t \rightarrow \infty$, the manifolds $(M, \widehat{g}(t))$
approach a family of surfaces of constant curvature
$- \frac12$ that slowly pinch off a closed geodesic.
\end{remark}

\section{Torus bundles} \label{sec3}

In this section we prove Theorem \ref{thm1.6}. In Subsection \ref{subsec3.1}
we write down the Ricci flow equations with a $U(1) \times U(1)$
symmetry and give some direct consequences.  In Subsection
\ref{subsec3.2} we show that the Ricci flow exists for all $t \in [0, \infty)$.
In Subsection \ref{subsec3.3} we prove that the curvature decays like
$O \left( t^{-1} \right)$. In Subsection \ref{subsec3.4} we show that
the length of the circle base is $O(\sqrt{t})$. In Subsection
\ref{subsec3.5} we finish the proof of Theorem \ref{thm1.6}.

\subsection{Twisted principal $U(1) \times U(1)$ bundles} \label{subsec3.1}

Let $N$ be a an 
orientable $3$-manifold which is the total space of
a fiber bundle $\pi : N \rightarrow S^1$, with
$T^2$-fibers.
Choosing an orientation of $S^1$, the
fiber bundle has a
holonomy $H \in \SL(2, \Z) = \pi_0(\Diff^+(T^2))$. Taken up
to inverses, $H$ determines the topological type of the
fiber bundle.
We refer to \cite[Theorem 5.5]{Scott} for the Thurston types of
such fiber bundles.
If $H$ is elliptic, i.e. has finite order, then $N$
has a flat structure. If $H$ is parabolic,
i.e. $|\Tr(H)| \le 2$ but $H$ is not elliptic, then
$N$ has a $\Nil$ structure.  If $H$ is hyperbolic, i.e. has
no eigenvalues on the unit circle, then $N$ has a $\Sol$ structure.

If $H$ is the identity then $N = S^1 \times T^2$ is the total space
of a principal $U(1) \times U(1)$ bundle.  That is, $N$ admits a
free $U(1) \times U(1)$ action.  In general, $N$ is the total
space of a twisted principal $U(1) \times U(1)$ bundle, where
``twisted''
refers to the fact that $H$ may be nontrivial.
The setup is a special case of that in
\cite[Section 4.1]{L}.
Let ${\mathcal E}$ be a local system
over $S^1$ of groups isomorphic to $U(1) \times U(1)$. We 
assume that the holonomy of the local system is 
$H \in \Aut(U(1) \times U(1))$. Then there is a notion of a
free ${\mathcal E}$-action on the $T^2$-bundle $N$, 
which generalizes the global
$U(1) \times U(1)$ action that exists when $H$ is the identity.

Let $h$ be a Riemannian metric on $N$ which is ${\mathcal E}$-invariant.
There is a corresponding horizontal distribution ${\mathcal H}$ on $N$. Since
${\mathcal H}$ is one-dimensional, it is
integrable. Note that even if $H$ is the identity,
${\mathcal H}$ can have a nontrivial holonomy in $U(1) \times U(1)$,
when going around the circle base.
Thus the flat structure on ${\mathcal E}$ is logically distinct
from the flat structure on $N$ coming from ${\mathcal H}$. 

Let $V$ be a coordinate chart of $S^1$, with local coordinate $y$.
The integrability of ${\mathcal H}$ gives a local trivialization
$V \times T^2$ of $\pi^{-1}(V)$. The restriction of
$h$ to $\pi^{-1}(V)$ is invariant under the $U(1) \times U(1)$ action
coming from ${\mathcal E} \big|_V$.
Using this action and the local trivialization, 
let $x_1, x_2$ denote local angular coordinates on
the $T^2$-fibers.
In terms of these coordinates we can write
\begin{equation} \label{3.1}
h = \sum_{i,j = 1}^2 G_{ij}(y) dx^i dx^j + g_{yy}(y) \, dy^2.
\end{equation} 

We use $i,j,k,l$ for vertical indices.
From \cite[Section 4.2]{L},
nonzero components of the curvature tensor of $(N,h)$ are
\begin{align} \label{3.2}
R^N_{ijkl} & = - \, \frac14 g^{yy} G_{ik,y} G_{jl,y} + \frac14
g^{yy} G_{il,y} G_{jk,y}, \\
R^N_{iyjy} & = - \, \frac12 G_{ij;yy} + \frac14 G^{kl} G_{ik,y} G_{jl,y},
\notag
\end{align}
where 
\begin{equation}
G_{ij;yy} = G_{ij,yy} - \Gamma^y_{\: yy} G_{ij,y} =
G_{ij,yy} - \frac12 \, \frac{g_{yy,y}}{g_{yy}} \, G_{ij,y}.
\end{equation}

The nonzero components of the Ricci tensor are
\begin{align} \label{3.3}
R_{ij} & = - \, \frac12 g^{yy} G_{ij;yy} - \, \frac14   
g^{yy} G^{kl} G_{kl,y} G_{ij,y} + g^{yy} \frac12 G^{kl} G_{ik,y} G_{lj,y}, \\
R_{yy} & = - \, \frac12 G^{ij} G_{ij;yy} + \frac14 G^{ij} G_{jk,y} 
G^{kl} G_{li,y}. \notag
\end{align}
The scalar curvature is
\begin{equation} \label{3.4}
R = - \, g^{yy} G^{ij} G_{ij;yy} + \frac34 g^{yy} G^{ij} G_{jk,y}
G^{kl} G_{li,y} - \, \frac14 g^{yy} G^{ij} G_{ij,y} G^{kl} G_{kl,y}.
\end{equation}

We will use matrix notation $G = \left( G_{ij} \right)$.
Note that $G$ is symmetric and positive-definite.
The Ricci flow equation
\begin{equation} \label{3.5}
\frac{dh}{dt} = - \, 2 \, \ric_{h(t)}
\end{equation}
preserves the local $U(1) \times U(1)$ invariance of the metric.
In terms of $g$ and $G$, it becomes
\begin{align} \label{3.6}
\frac{\partial g_{yy}}{\partial t} & = \Tr \left( G^{-1} G_{;yy} \right)
- \, \frac12 \Tr \left( \left( G^{-1} G_{,y} \right)^2 \right), \\
\frac{\partial G}{\partial t} & = g^{yy} G_{;yy} + \frac12 g^{yy}
\Tr \left( G^{-1} G_{,y} \right) G_{,y} - g^{yy} G_{,y} G^{-1} G_{,y}. \notag
\end{align}
Adding a Lie derivative with respect to $- \nabla \ln \sqrt{\det(G)}$
to the right-hand side gives the modified equations
\begin{align} \label{3.7}
\frac{\partial g_{yy}}{\partial t} & = 
\frac12 \Tr \left( \left( G^{-1} G_{,y} \right)^2 \right), \\
\frac{\partial G}{\partial t} & = g^{yy} \left( G_{;yy} - G_{,y} G^{-1} G_{,y} \right).
\notag
\end{align}
Taking $y$ to run over $\R$, the periodicity condition on $G$ is
\begin{equation} \label{3.8}
G(y+1,t) = H^T G(y,t) H. 
\end{equation}
The manifold $N$ can be recovered by taking the quotient of
$\R \times T^2$ by the equivalence relation 
$(y+1, x) \sim (y, Hx + b)$, where $b$ is
some
fixed element of $\R^2/\Z^2$.

Hereafter we will mainly work with (\ref{3.7}).
An example of a solution to (\ref{3.7}) is
\begin{align} \label{3.9}
g_{yy}(y,t) & = 4c^2(t+a), \\
G(y,t) & = 
\begin{pmatrix}
e^{2cy} & 0 \\
0 & e^{-2cy}
\end{pmatrix}. \notag
\end{align}
If we take $y$ to be defined in $\R/\Z$ then we get
a metric on a bundle with hyperbolic holonomy
$H \in \SL(2, \Z)$, provided that
$H$ has eigenvalues $e^{c}$ and $e^{-c}$.
Here $\partial_{x^1}$ and $\partial_{x^2}$ are corresponding eigenvectors.
The length of the circle base is $2c\sqrt{t+a}$.

Given $s > 0$ and a solution $(g(\cdot), G(\cdot))$ of (\ref{3.7}), we
obtain another solution of (\ref{3.7}) by putting
\begin{align} \label{3.10}
g_s(t) & = \frac{1}{s} g(st), \\
G_s(t) & = G(st). \notag
\end{align}
Put
\begin{equation} \label{3.11}
h_s = (dx)^T G_s dx + g_s.
\end{equation}
Then
\begin{equation} \label{3.12}
\left| \rem_{h_s} \right|^2 = s^2 \left| \rem_{h} \right|^2.
\end{equation}

\begin{lem} \label{lem3.1}
\begin{equation} \label{3.13}
\frac{\partial \ln \det(G)}{\partial t} = \triangle \ln \det(G).
\end{equation}
\end{lem}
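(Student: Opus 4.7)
The plan is a direct calculation from the evolution equations (\ref{3.7}). First, by Jacobi's formula applied to the smooth $t$-family of positive-definite symmetric matrices $G(y,t)$,
$$\partial_t \ln\det(G) = \Tr\bigl(G^{-1}\partial_t G\bigr).$$
Substituting the evolution equation for $G$ from (\ref{3.7}) and expanding $G_{;yy}=G_{,yy}-\frac12\frac{g_{yy,y}}{g_{yy}}G_{,y}$, I obtain
$$\partial_t\ln\det(G) = g^{yy}\Tr(G^{-1}G_{,yy}) - \frac{g^{yy}g_{yy,y}}{2g_{yy}}\Tr(G^{-1}G_{,y}) - g^{yy}\Tr\bigl((G^{-1}G_{,y})^2\bigr).$$

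Next I would compute $\triangle\ln\det(G)$. Since $\ln\det(G)$ is $T^2$-invariant, it descends to a function of $y$ alone, and it is natural to interpret $\triangle$ as the Laplacian on the one-dimensional orbit space equipped with the time-dependent metric $g_{yy}(y,t)\,dy^2$:
$$\triangle f = \frac{1}{\sqrt{g_{yy}}}\partial_y\bigl(\sqrt{g_{yy}}\,g^{yy}f_{,y}\bigr) = g^{yy}f_{,yy} - \frac{g^{yy}g_{yy,y}}{2g_{yy}}f_{,y}.$$
For $f=\ln\det(G)$, differentiating $\det G$ gives $f_{,y}=\Tr(G^{-1}G_{,y})$, and using $(G^{-1})_{,y}=-G^{-1}G_{,y}G^{-1}$ gives
$$f_{,yy} = \partial_y\Tr(G^{-1}G_{,y}) = \Tr(G^{-1}G_{,yy}) - \Tr\bigl((G^{-1}G_{,y})^2\bigr).$$
Plugging these two identities into the formula for $\triangle f$ yields exactly the three-term expression obtained above for $\partial_t\ln\det(G)$, finishing the proof.

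There is no substantive obstacle; the calculation is completely routine once (\ref{3.7}) is unpacked. The only point requiring care is the choice of Laplacian. The Laplace--Beltrami operator on the total space $(N,h)$ acting on a $T^2$-invariant function $f(y)$ differs from the orbit-space Laplacian by a mean-curvature term $\frac12 g^{yy}f_{,y}\,\partial_y\ln\det G$; for $f=\ln\det G$ this contributes $\frac12 g^{yy}(\Tr(G^{-1}G_{,y}))^2$, which is generally nonzero. Accordingly, (\ref{3.13}) should be understood with $\triangle$ being the Laplacian on the 1D base rather than on $N$, consistent with the usage of $\triangle$ on the base $M$ in Section \ref{sec2}.
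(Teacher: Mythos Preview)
Your proof is correct and follows essentially the same approach as the paper; the only cosmetic difference is that the paper passes to an arc-length parameter $s$ on the base circle (so the Christoffel term drops out), whereas you carry the coordinate $y$ and the $\frac{g^{yy}g_{yy,y}}{2g_{yy}}$ term explicitly. Your remark clarifying that $\triangle$ is the base Laplacian is a useful addition.
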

\begin{proof}
We have
\begin{align}
\frac{\partial \ln \det(G)}{\partial t} & =
\Tr \left( G^{-1} \frac{\partial G}{\partial t} \right) \\
& = g^{yy} \Tr \left( G^{-1} (G_{;yy} - G_{,y} G^{-1} G_{,y}) \right). \notag
\end{align}
Passing to an arc-length parameter $s$ on $S^1$ at a given
time $t$, we obtain
\begin{equation}
\frac{\partial \ln \det(G)}{\partial t} =
\Tr \left( G^{-1} (G_{,ss} - G_{,s} G^{-1} G_{,s}) \right).
\end{equation}
On the other hand
\begin{align}
\triangle \ln \det(G) & = \frac{d^2}{ds^2} \ln \det(G) =
\frac{d}{ds} \Tr \left( G^{-1} G_{,s} \right) \\
& = \Tr \left( G^{-1} G_{,ss} - G^{-1} G_{,s} G^{-1} G_{,s} \right). \notag
\end{align}
This proves the lemma.
\end{proof}

Note that ${\det(G)}$ is globally defined on $S^1$ since
the holonomy lies in $\SL(2, \Z)$. It represents the squares of the volumes
of the fibers.

\begin{cor} \label{cor3.2}
There are constants $C_1, C_2 > 0$ so that for all
$y \in S^1$ and all times $t$ for which the flow exists.
\begin{equation} \label{3.14}
C_1 \le \sqrt{\det(G)}(y,t) \le C_2.
\end{equation}
\end{cor}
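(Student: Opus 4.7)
The plan is to apply the parabolic maximum principle directly to the heat equation furnished by Lemma \ref{lem3.1}. Set $u(y,t) = \ln\det G(y,t)$. Since $H \in \SL(2,\Z)$, the periodicity condition (\ref{3.8}) gives $\det G(y+1,t) = \det(H^T)\det G(y,t)\det(H) = \det G(y,t)$, so $u$ descends to a smooth function on $S^1$ (depending on $t$). By Lemma \ref{lem3.1}, $u$ satisfies the linear heat equation
\begin{equation}
\frac{\partial u}{\partial t} = \triangle_{g_{yy}(t)} u
\end{equation}
on the compact one-dimensional manifold $S^1$, with respect to the time-dependent metric $g_{yy}(t)$.

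Next I would invoke the standard scalar maximum principle for the heat equation on a compact manifold: if $v$ satisfies $\partial_t v = \triangle v$ then $\max_{S^1} v(\cdot, t)$ is nonincreasing and $\min_{S^1} v(\cdot, t)$ is nondecreasing in $t$. Applying this to $u$ yields
\begin{equation}
\min_{y \in S^1} \ln \det G(y,0) \: \le \: \ln \det G(y,t) \: \le \: \max_{y \in S^1} \ln \det G(y,0)
\end{equation}
for all $y \in S^1$ and all $t$ in the existence interval. Exponentiating and taking square roots gives the corollary with
\begin{equation}
C_1 = \Bigl( \min_{y \in S^1} \det G(y,0) \Bigr)^{1/2}, \qquad C_2 = \Bigl( \max_{y \in S^1} \det G(y,0) \Bigr)^{1/2}.
\end{equation}
Both are strictly positive since $G(y,0)$ is positive-definite and continuous on the compact manifold $S^1$.

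There is essentially no obstacle: the only mild point worth noting is that the Laplacian in Lemma \ref{lem3.1} is with respect to the evolving one-dimensional metric $g_{yy}(t) dy^2$, but the maximum principle applies in exactly the same way for a smooth one-parameter family of metrics on a fixed compact manifold, since at a spatial maximum of $u(\cdot, t)$ one has $\triangle_{g_{yy}(t)} u \le 0$ (and analogously at a spatial minimum). The geometric content is that $\sqrt{\det G}$, which measures the area of the $T^2$-fibers, is uniformly bounded above and below by its initial extrema along the flow.
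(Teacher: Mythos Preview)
Your proof is correct and is precisely the argument the paper has in mind: apply the maximum principle to the heat equation of Lemma \ref{lem3.1} for $\ln\det G$, using that $\det G$ is globally defined on $S^1$ since $H\in\SL(2,\Z)$. The paper's proof is just the one-line statement of this, while you have spelled out the details.
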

\begin{proof}
This follows from applying the maximum principle to Lemma \ref{lem3.1}.
\end{proof}

\begin{lem} \label{lem3.3}
Put
\begin{equation} \label{3.15}
{\mathcal E} = g^{yy} \Tr \left( \left( G^{-1} G_{,y} \right)^2 \right).
\end{equation}
Then
\begin{equation} \label{3.16}
\frac{\partial {\mathcal E}}{\partial t} = \triangle {\mathcal E} - 
\frac{{\mathcal E}^2}{2} - 
2 \, g^{yy} g^{yy}
\Tr \left( \left( G^{-1} G_{;yy} - \left( G^{-1} G_{,y} \right)^2 \right)^2 \right).
\end{equation}
\end{lem}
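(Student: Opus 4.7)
The plan is to compute $\partial_t {\mathcal E}$ directly from the evolution equations (\ref{3.7}) and match it against $\triangle {\mathcal E}$ evaluated in the arc-length parameter on $S^1$. Write $P = G^{-1} G_{,y}$ so that ${\mathcal E} = g^{yy} \Tr(P^2)$, and let $B = G^{-1} \partial_t G$. By the second equation of (\ref{3.7}),
\[
B = g^{yy}\left(G^{-1} G_{;yy} - (G^{-1} G_{,y})^2\right),
\]
so the last term of (\ref{3.16}) is precisely $-2 \Tr(B^2)$. My goal is thus to reach the shape $\partial_t {\mathcal E} = \triangle {\mathcal E} - \tfrac12 {\mathcal E}^2 - 2 \Tr(B^2)$.

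First I would expand $\partial_t {\mathcal E} = (\partial_t g^{yy})\Tr(P^2) + g^{yy} \partial_t \Tr(P^2)$. The first equation of (\ref{3.7}) gives $\partial_t g^{yy} = -\tfrac{1}{2}(g^{yy})^2 \Tr(P^2)$, which immediately produces the $-{\mathcal E}^2/2$ term. For the other factor, differentiating $P$ in time via $\partial_t(G^{-1}) = -G^{-1}(\partial_t G) G^{-1}$ yields $\partial_t P = B_{,y} + [P, B]$, and the cyclicity of the trace kills the commutator inside $\Tr(P \partial_t P)$, so
\[
\partial_t {\mathcal E} = -\frac{{\mathcal E}^2}{2} + 2 g^{yy} \Tr(P B_{,y}).
\]

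It remains to identify $2 g^{yy} \Tr(P B_{,y})$ with $\triangle {\mathcal E} - 2 \Tr(B^2)$. Fixing a time $t_0$ and passing to the arc-length parameter $s$ (so that $\triangle = \partial_s^2$), the matrix $P_s := G^{-1} G_{,s} = \sqrt{g^{yy}}\, P$ satisfies the key identity $(P_s)_{,s} = G^{-1} G_{,ss} - P_s^2 = B$, the point being that $B$ is a coordinate-invariant matrix-valued function. Differentiating ${\mathcal E} = \Tr(P_s^2)$ twice in $s$ then gives
\[
\triangle {\mathcal E} \: = \: 2 \Tr\bigl((P_s)_{,s}^2\bigr) + 2 \Tr\bigl(P_s (P_s)_{,ss}\bigr) \: = \: 2 \Tr(B^2) + 2 \Tr\bigl(P_s (P_s)_{,ss}\bigr),
\]
while the changes of variable $P = \sqrt{g_{yy}}\, P_s$ and $B_{,y} = \sqrt{g_{yy}}\, B_{,s}$ reduce the remaining term to $2 g^{yy} \Tr(P B_{,y}) = 2 \Tr\bigl(P_s (P_s)_{,ss}\bigr)$. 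Subtracting closes the identity.

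I expect the main obstacle to be the bookkeeping between covariant, coordinate, and arc-length derivatives. The cleanest way through it is the observation that the tensorial combination $G^{-1} G_{;yy} - (G^{-1} G_{,y})^2$ is (up to the factor $g^{yy}$) exactly the $s$-derivative $(P_s)_{,s}$; this is what drives the algebraic cancellation between $\partial_t \Tr(P^2)$ and $\triangle {\mathcal E}$ and extracts precisely the $-2\Tr(B^2)$ reaction term.
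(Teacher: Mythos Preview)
Your proof is correct and follows essentially the same route as the paper: differentiate ${\mathcal E}$ in $t$ using (\ref{3.7}), pass to the arc-length parameter $s$, and compare against ${\mathcal E}_{,ss}$. Your organization via $P$, $B$, and the identity $(P_s)_{,s}=B$ makes the cancellation more transparent than the paper's ``one now computes that\ldots'', but the underlying computation is identical.
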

\begin{proof}
Differentiating (\ref{3.15}) with respect to $t$ gives
\begin{align} \label{neweqn}
\frac{\partial {\mathcal E}}{\partial t} \:  = & \: - \: \frac12
g^{yy} g^{yy} \left( \Tr \left( \left( G^{-1} G_{,y} \right)^2 \right) 
\right)^2 \\
& \: - \:  2 g^{yy} \: \Tr
\left( G^{-1} G_{,y} G^{-1} 
g^{yy} (G_{;yy} - G_{,y} G^{-1} G_{,y} ) G^{-1} G_{,y}
\right) \notag \\
& \: + \:  2 g^{yy} \: \Tr
\left( G^{-1} G_{,y} G^{-1} 
\left( g^{yy} (G_{;yy} - G_{,y} G^{-1} G_{,y} ) \right)_{,y}
\right). \notag
\end{align}
Switching to an arc-length parameter $s$ at a given time $t$, we obtain
\begin{align}
\frac{\partial {\mathcal E}}{\partial t} \:  = & \: - \: 
\frac{{\mathcal E}^2}{2}
\: - \:  2  \: \Tr
\left( G^{-1} G_{,s} G^{-1} 
(G_{,ss} - G_{,s} G^{-1} G_{,s} ) G^{-1} G_{,s}
\right)  \\
& \: + \:  2 \: \Tr
\left( G^{-1} G_{,s} G^{-1} 
\left( G_{,ss} - G_{,s} G^{-1} G_{,s} \right)_{,s}
\right). \notag
\end{align}
One now computes that
\begin{align}
\frac{\partial {\mathcal E}}{\partial t} \:  = \: & - \: 
\frac{{\mathcal E}^2}{2}
\: + \:  \left( \Tr
\left( \left( G^{-1} G_{,s} \right)^2 \right) \right)_{,ss} \\
& - \: 2 \: \Tr \left( \left( G^{-1} G_{,ss} - \left( G^{-1} G_{,s} \right)^2
\right)^2 \right). \notag
\end{align}
This proves the lemma.
\end{proof}

Note that 
\begin{equation}{\mathcal E} = g^{yy} \Tr \left( \left( 
G^{-\frac12} G_{,y} G^{-\frac12} \right)^2 \right)
\end{equation}
is nonnegative, since $G^{-\frac12} G_{,y} G^{-\frac12}$ is symmetric.

\begin{cor} \label{cor3.4}
For all $y \in S^1$ and all $t \ge 0$ for which the flow exists,
we have ${\mathcal E}(y,t) \le \frac{2}{t}$.
\end{cor}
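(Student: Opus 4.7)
The plan is to combine Lemma \ref{lem3.3} with the maximum principle, once one shows that the final term on the right-hand side of (\ref{3.16}) is non-positive. The pointwise inequality
\[
\frac{\partial \mathcal{E}}{\partial t} \le \triangle \mathcal{E} - \frac{\mathcal{E}^2}{2}
\]
will then reduce the problem to ODE comparison with $\eta' = -\eta^2/2$, whose relevant solution is $\eta(t) = \frac{2}{t+c}$.

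First I would verify that
\[
\Tr\bigl((G^{-1}G_{;yy} - (G^{-1}G_{,y})^2)^2\bigr) \ge 0.
\]
The matrix in parentheses is not symmetric a priori, so the positivity is not automatic. However, under conjugation by $G^{1/2}$ one has
\[
G^{1/2}(G^{-1}G_{;yy})G^{-1/2} = G^{-1/2}G_{;yy}G^{-1/2}
\]
and, writing $B = G^{-1/2}G_{,y}G^{-1/2}$,
\[
G^{1/2}(G^{-1}G_{,y})^2 G^{-1/2} = B^2,
\]
both of which are symmetric. Since trace is a similarity invariant, the quantity above equals $\Tr\bigl((G^{-1/2}G_{;yy}G^{-1/2} - B^2)^2\bigr)$, which is the trace of the square of a symmetric matrix and hence non-negative.

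Next, applying this to Lemma \ref{lem3.3} gives the differential inequality
\[
\frac{\partial \mathcal{E}}{\partial t} \le \triangle \mathcal{E} - \frac{\mathcal{E}^2}{2}.
\]
To conclude, I would apply the parabolic maximum principle by comparing $\mathcal{E}$ to the barrier $\psi_\epsilon(t) = \frac{2}{t+\epsilon}$. Since $\psi_\epsilon$ solves the spatially constant ODE $\psi_\epsilon' = -\psi_\epsilon^2/2$, choosing $\epsilon > 0$ small enough so that $\psi_\epsilon(0) = 2/\epsilon \ge \sup_y \mathcal{E}(y,0)$, the maximum principle gives $\mathcal{E}(y,t) \le \psi_\epsilon(t) \le \frac{2}{t}$ for all $y \in S^1$ and all $t > 0$ for which the flow exists. (The bound is trivially valid at $t=0$.)

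The main (small) obstacle is the sign of the Hessian-type correction term in Lemma \ref{lem3.3}; everything else is a routine maximum principle argument. Once the similarity trick above is spotted, the conclusion follows without further subtlety, and the particular constant $2$ in the bound $\frac{2}{t}$ is precisely the one dictated by the coefficient $-\frac12$ in the $\mathcal{E}^2$ term.
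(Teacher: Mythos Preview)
Your proposal is correct and follows essentially the same approach as the paper: both use the conjugation by $G^{1/2}$ to rewrite the correction term as the trace of the square of a symmetric matrix, then apply the maximum principle to the resulting differential inequality $\partial_t \mathcal{E} \le \triangle \mathcal{E} - \mathcal{E}^2/2$. Your version is slightly more explicit in spelling out the barrier $\psi_\epsilon$, but the argument is the same.
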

\begin{proof}
We have 
\begin{align}
& \Tr \left( \left( G^{-1} G_{;yy} - \left( G^{-1} G_{,y} \right)^2 \right)^2 \right)
= \\
& \Tr \left( \left( G^{-\frac12} G_{;yy} G^{-\frac12} - \left( 
G^{-\frac12} G_{,y} G^{-\frac12} \right)^2 \right)^2 \right), \notag
\end{align}
which is nonnegative since $G^{-\frac12} G_{;yy} G^{-\frac12} - \left( 
G^{-\frac12} G_{,y} G^{-\frac12} \right)^2$ is symmetric.
The corollary now 
follows from applying the maximum principle to Lemma \ref{lem3.3}.
\end{proof}

\subsection{Nonsingularity of the flow} \label{subsec3.2}

\begin{prop} \label{prop3.5}
Given $h(0)$ as described in Subsection \ref{subsec3.1}, the flow
(\ref{3.7}) exists for $t \in [0, \infty)$.
\end{prop}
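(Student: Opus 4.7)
The plan is to argue by contradiction using a blow-up analysis. Assume the flow develops a first singularity at time $T<\infty$, so $\limsup_{t\to T^-}\max_{y\in S^1}|\rem^N|(y,t)=\infty$, and derive a contradiction by producing a flat limit of rescalings.

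First I would collect the a priori bounds that survive rescaling. By Corollary \ref{cor3.2} the fiber area $\sqrt{\det G}$ is pinched between positive constants on $S^1\times[0,T)$. By the maximum principle applied to Lemma \ref{lem3.3}, together with Corollary \ref{cor3.4}, the shearing quantity satisfies $\mathcal{E}(y,t)\le\min(\sup_y\mathcal{E}(y,0),\,2/t)$ and is therefore bounded on $S^1\times[0,T)$. Integrating the ODE $\partial_t g_{yy}=\tfrac12 g_{yy}\mathcal{E}$ in $t$ shows that $g_{yy}$ is bounded above and below on $S^1\times[0,T)$ as well.

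Next I perform Hamilton's point-picking: choose $(y_k,t_k)$ with $t_k\to T$, $Q_k:=|\rem^N|(y_k,t_k)\to\infty$ and $|\rem^N|\le 2Q_k$ on $S^1\times[0,t_k]$. Rescale $\widetilde h_k(t):=Q_k\,h(t_k+t/Q_k)$ and introduce coordinates $\tilde y:=\sqrt{Q_k}(y-y_k)$, $\tilde x^i:=\sqrt{Q_k}(x^i-x_k^i)$. In these coordinates the component functions $\widetilde g_{\tilde y\tilde y}$ and $\widetilde G$ equal $g_{yy}$ and $G$ evaluated at slowly varying arguments, and a direct calculation yields $\widetilde{\mathcal{E}}_k=\mathcal{E}/Q_k\to 0$ uniformly on compact subsets of spacetime. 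To extract a pointed smooth limit I pass to the local cover on which the $T^2$-fibers unfold to $\R^2$ and the $\R^2$-symmetry acts freely; here Perelman's $\kappa$-noncollapsing on finite time intervals together with Shi's derivative estimates and the local symmetry yields a uniform positive lower bound on the injectivity radius on compact subsets. Hamilton's compactness theorem then produces a smooth ancient complete Ricci flow $(M_\infty,p_\infty,\widetilde h_\infty(t))$, $t\in(-\infty,0]$, with bounded curvature and with $|\rem^N|_{\widetilde h_\infty}(p_\infty,0)=1$.

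The contradiction comes from $\mathcal{E}_\infty\equiv 0$: then $\widetilde G_\infty^{-1}\widetilde G_{\infty,\tilde y}=0$, so $\widetilde G_\infty$ is independent of $\tilde y$ at each time, and plugging this into (\ref{3.7}) forces $\partial_t\widetilde G_\infty=0$ as well, so $\widetilde G_\infty$ is constant in both $\tilde y$ and $t$. But then the curvature formulas (\ref{3.2}) give $\rem^N_{\widetilde h_\infty}\equiv 0$, forcing $\widetilde h_\infty$ to be flat and contradicting $|\rem^N|_{\widetilde h_\infty}(p_\infty,0)=1$. The main technical obstacle is the extraction of the smooth limit, since the $T^2$-fibers may collapse along the blow-up sequence; this is handled by passing to the local $\R^2$-cover where the symmetry acts freely, so that Perelman's no-local-collapsing produces a uniform injectivity radius bound at the rescaled scale.
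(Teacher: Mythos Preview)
Your argument is correct in outline and reaches the right contradiction, but it takes a genuinely different route from the paper. The paper works entirely in the dimensionally-reduced setting: it invokes the modified $W$-functional (\ref{3.17}) on the one-dimensional base and runs List's no-local-collapsing argument there to extract a blowup limit $(M_\infty, g_\infty, G_\infty)$ with $G_\infty$ constant; since $M_\infty$ is one-dimensional, there is no nonflat Ricci flow on it. You instead stay in three dimensions, using Perelman's $\kappa$-noncollapsing on the compact manifold $N$ to get the injectivity-radius bound, and then use the elementary observation that the scalar $\mathcal{E}$ is bounded on $[0,T)$ and scales like curvature, so $\widetilde{\mathcal{E}}_k = \mathcal{E}/Q_k \to 0$, forcing $G_\infty$ constant and hence $\rem_\infty = 0$ via (\ref{3.2}). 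Your approach avoids the bespoke $W$-functional but imports Perelman's theorem; the paper's approach is more self-contained within the reduced framework and parallels what was done in Section~\ref{subsec2.3} (cf.\ Remark~\ref{rem2.6}, which notes exactly this trade-off in the warped-product setting).

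Two points in your write-up deserve tightening. First, in passing to the limit you should normalize $G$ at the basepoint, say $\widetilde G_k(0,0)=I$, by a constant $\SL(2,\R)$ change of the fiber basis; without this the individual entries of $G(y_k,t_k)$ are not controlled (only $\det G$ is), and your claim that ``$\widetilde G$ equals $G$ evaluated at slowly varying arguments'' does not by itself give convergence of $\widetilde G_k$. After normalization, the bound $\Tr\bigl((\widetilde G^{-1}\widetilde G_{,\tilde y})^2\bigr)=g_{yy}\,\mathcal{E}/Q_k\to 0$ together with Gronwall controls $\widetilde G_k$ on compact sets and shows the rescaled Killing fields $\partial_{\tilde x^i}$ have locally bounded norm, so the $\R^2$-symmetry passes to the limit. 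Second, your appeal to Perelman is for the Ricci flow (\ref{3.6}) on the compact $N$, not for the cover or the modified flow (\ref{3.7}); since (\ref{3.6}) and (\ref{3.7}) differ only by a time-dependent diffeomorphism, the noncollapsing transfers, but this should be said.
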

\begin{proof}
If not then there is a singularity at some time $T < \infty$.
Specializing the modified $W$-functional of
\cite[Definition 4.48]{L} to our case, it becomes 
\begin{align} \label{3.17}
& W(G,g,f,\tau) = \\
& \int_{S^1} \left[ \tau \left( |\nabla f|^2 - \,
\frac14 g^{yy} \Tr \left( \left( G^{-1} G_{,y} \right)^2 \right) \right)
+f-1 \right] (4\pi\tau)^{- \, \frac12} e^{-f} \sqrt{g_{yy}} dy. \notag
\end{align}
Using this modified $W$-functional, we can go through the
same steps as in the proof of \cite[Theorem 7.9]{Li}
to conclude that there is a blowup limit 
$\left( M_\infty, m_\infty, g_\infty(\cdot), G_\infty(\cdot) \right)$ where
\begin{itemize}
\item $G_\infty = \const$. and
\item $g_\infty$ is a nonflat Ricci flow solution on the $1$-manifold
$M_\infty$.
\end{itemize}
However, there is no such nonflat Ricci flow solution.
This proves the proposition.
\end{proof}

\subsection{Curvature bound} \label{subsec3.3}

\begin{prop} \label{prop3.6}
We have $\max_{p \in N} \left| \rem^N \right| (p,t) = O \left( t^{-1} \right)$.
\end{prop}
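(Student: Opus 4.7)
The plan is to argue by contradiction via a blowup at times going to infinity, exactly in the spirit of Propositions \ref{prop2.10} and \ref{prop2.17}, but with the crucial simplification that the monotone quantity ${\mathcal E}$ of Corollary \ref{cor3.4} does all of the ``Gauss--Bonnet'' work for us.

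Suppose the conclusion fails. Then $\limsup_{t \rightarrow \infty} t \cdot \max_{p \in N} |\rem^N|(p,t) = \infty$. Apply Hamilton's Type IIb pointpicking: take $T_i \rightarrow \infty$ and choose $(p_i, t_i) \in N \times [0, T_i]$ maximizing $t(T_i - t) |\rem^N|(p,t)$, set $Q_i = |\rem^N|(p_i, t_i)$, and consider the rescalings
\begin{equation}
g_i(t) \: = \: Q_i \, g(t_i + t/Q_i), \qquad G_i(t) \: = \: G(t_i + t/Q_i),
\end{equation}
which solve (\ref{3.7}) on a time interval $(A_i, \Omega_i)$ with $A_i \rightarrow -\infty$, $\Omega_i \rightarrow \infty$, and satisfy $|\rem^N_{h_i}|(p_i, 0) = 1$ together with a uniform curvature bound.

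The key input is Corollary \ref{cor3.4}. Since the scaling (\ref{3.10}) gives ${\mathcal E}_{h_s} = s^{-1} {\mathcal E}_h$, the rescaled energy satisfies
\begin{equation}
{\mathcal E}_{h_i}(y,s) \: \le \: \frac{2}{Q_i t_i + s},
\end{equation}
and $Q_i t_i \rightarrow \infty$ by the pointpicking. Hence ${\mathcal E}_{h_i} \rightarrow 0$ uniformly on compact time intervals, and in any smooth pointed limit $(N_\infty, p_\infty, h_\infty(\cdot))$ one has ${\mathcal E}_{h_\infty} \equiv 0$. Because ${\mathcal E}$ is the squared norm of the symmetric matrix $G^{-\frac12} G_{,y} G^{-\frac12}$, this forces $(G_\infty)_{,y} \equiv 0$, so $G_\infty$ is spatially constant. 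Looking back at (\ref{3.2}), both $R^N_{ijkl}$ and $R^N_{iyjy} = - \frac12 G_{ij;yy}$ then vanish identically, contradicting $|\rem^N_{h_\infty}|(p_\infty, 0) = 1$.

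The main obstacle is the extraction of a smooth pointed limit, since the $T^2$-fibers may collapse. Here Corollary \ref{cor3.2} is used to guarantee a lower bound on the covolume of the fiber lattice, and the remaining collapse is along flat directions, so one works on local covers (or, as in the proof of Lemma \ref{lem2.11}, in the \'etale groupoid framework of \cite{L0} and \cite{L}). On such covers one has a uniform injectivity radius lower bound at the lifted basepoint, and the higher--derivative curvature bounds of \cite[Section 5]{Li}--style (applied to (\ref{3.7})) yield smooth convergence to an eternal solution of (\ref{3.7}) with the properties described above. Once the limit exists, the ${\mathcal E}$-argument of the previous paragraph closes the contradiction.
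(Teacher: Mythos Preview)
Your proposal is correct and follows essentially the same approach as the paper's proof: contradiction via Type~IIb pointpicking as in Proposition~\ref{prop2.10}, passage to an eternal limit (in the \'etale groupoid sense to handle collapsing), use of Corollary~\ref{cor3.4} and $Q_i t_i \to \infty$ to force $G_\infty$ constant, and then the observation that the resulting limit is flat. The paper phrases the final contradiction as ``$g_\infty$ is a nonflat Ricci flow solution on a one-dimensional space,'' while you read it off directly from the curvature formulas~(\ref{3.2}); these are the same thing.
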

\begin{proof}
Suppose that the proposition is false.  We take a rescaling limit
as in the proof of Proposition \ref{prop2.10} to obtain a nonflat eternal
solution $(M_\infty, p_\infty, g_\infty(\cdot), G_\infty(\cdot))$ on a
one-dimensional \'etale groupoid $M_\infty$. 
(It will follow from Lemma \ref{lengthlem} that $M_\infty$ is a one-dimensional
manifold.)
From Corollary \ref{cor3.4},
$G_\infty$ is constant. Then $g_\infty$ is a nonflat Ricci flow
solution on a one-dimensional space, which is a
contradiction.
\end{proof}

\subsection{Diameter bound} \label{subsec3.4}

We compute how the length of the base circle varies with time.
\begin{lem} \label{lem3.7}
Put $L(t) = \int_{S^1} \sqrt{g_{yy}}(y) dy$. Then
\begin{equation} \label{3.18}
\frac{dL}{dt} = \frac14 \int_{S^1} {\mathcal E}(y,t) \sqrt{g_{yy}}(y) dy.
\end{equation}
\end{lem}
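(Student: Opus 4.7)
The plan is to prove the lemma by a direct differentiation under the integral sign, using only the evolution equation for $g_{yy}$ from the modified flow (\ref{3.7}) and the definition of $\mathcal{E}$ in (\ref{3.15}). Since $L(t) = \int_{S^1} \sqrt{g_{yy}}(y,t) \, dy$, I expect
\begin{equation*}
\frac{dL}{dt} \: = \: \int_{S^1} \frac{1}{2\sqrt{g_{yy}}} \, \frac{\partial g_{yy}}{\partial t} \, dy,
\end{equation*}
so everything reduces to substituting in $\partial_t g_{yy}$.

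First I would invoke the first equation of (\ref{3.7}), namely
\begin{equation*}
\frac{\partial g_{yy}}{\partial t} \: = \: \frac{1}{2} \Tr\left( \left( G^{-1} G_{,y} \right)^2 \right),
\end{equation*}
and plug it into the expression above to get
\begin{equation*}
\frac{dL}{dt} \: = \: \int_{S^1} \frac{1}{4 \sqrt{g_{yy}}} \, \Tr\left( \left( G^{-1} G_{,y} \right)^2 \right) \, dy.
\end{equation*}

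Next I would rewrite the integrand in terms of $\mathcal{E}$. Since $g^{yy} = 1/g_{yy}$, the definition (\ref{3.15}) gives $\Tr\left( \left( G^{-1} G_{,y} \right)^2 \right) = g_{yy} \, \mathcal{E}$, and hence
\begin{equation*}
\frac{1}{4\sqrt{g_{yy}}} \Tr\left( \left( G^{-1} G_{,y} \right)^2 \right) \: = \: \frac{1}{4} \sqrt{g_{yy}} \, \mathcal{E},
\end{equation*}
which yields (\ref{3.18}) after integrating over $S^1$.

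There should be no serious obstacle; the only point to check is that differentiation under the integral sign is justified, which follows from the smoothness of $g(\cdot)$ on $S^1 \times [0,T]$ for any $T$ less than the maximal time of existence, together with the compactness of $S^1$. The modified flow (\ref{3.7}) is the one to use here rather than the unmodified Ricci flow (\ref{3.6}); since (\ref{3.7}) differs from (\ref{3.6}) by a Lie derivative, the length $L(t)$ of the base circle, which is a geometric invariant, is the same for both flows, so no ambiguity arises.
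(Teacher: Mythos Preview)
Your proof is correct and is essentially identical to the paper's own argument: both differentiate $\sqrt{g_{yy}}$ under the integral, substitute the evolution equation $\partial_t g_{yy} = \tfrac12 \Tr\left((G^{-1}G_{,y})^2\right)$ from (\ref{3.7}), and recognize the result as $\tfrac14 \mathcal{E}\sqrt{g_{yy}}$. The only difference is cosmetic (the paper writes $\tfrac{1}{2\sqrt{g_{yy}}}$ as $\tfrac12 g^{yy}\sqrt{g_{yy}}$ at the outset), and your remarks on differentiation under the integral and on the modified versus unmodified flow are fine supplementary comments.
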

\begin{proof}
We have
\begin{equation} \label{3.19}
\frac{dL}{dt} = \frac12 \int_{S^1} g^{yy} \frac{\partial g_{yy}}{\partial t} 
\sqrt{g_{yy}}(y) dy
= \frac14 \int_{S^1} g^{yy} \Tr \left( \left( G^{-1} G_{,y} \right)^2 \right) 
\sqrt{g_{yy}}(y) dy.
\end{equation}
This proves the lemma.
\end{proof}

In particular, $L(t)$ is monotonically nondecreasing in $t$.

\begin{lem} \label{lem3.8}
$t^{- \, \frac12} L(t)$ is monotonically nonincreasing in $t$.
\end{lem}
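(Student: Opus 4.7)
The plan is to combine the derivative formula for $L(t)$ from Lemma \ref{lem3.7} with the pointwise bound on $\mathcal{E}$ from Corollary \ref{cor3.4}. Specifically, Corollary \ref{cor3.4} says $\mathcal{E}(y,t) \le \frac{2}{t}$, and plugging this into Lemma \ref{lem3.7} gives
\begin{equation}
\frac{dL}{dt} \: = \: \frac14 \int_{S^1} \mathcal{E}(y,t) \sqrt{g_{yy}}(y) \, dy \: \le \: \frac{1}{2t} \int_{S^1} \sqrt{g_{yy}}(y) \, dy \: = \: \frac{L(t)}{2t}.
\end{equation}

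From here the conclusion is a one-line calculation:
\begin{equation}
\frac{d}{dt} \left( t^{-\frac12} L(t) \right) \: = \: t^{-\frac12} \frac{dL}{dt} \: - \: \frac12 t^{-\frac32} L(t) \: \le \: t^{-\frac12} \cdot \frac{L(t)}{2t} \: - \: \frac12 t^{-\frac32} L(t) \: = \: 0,
\end{equation}
which gives the desired monotonicity. There is no real obstacle since the two main ingredients (Lemma \ref{lem3.7} and Corollary \ref{cor3.4}) are already established.
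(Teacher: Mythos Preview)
Your proof is correct and follows exactly the same approach as the paper: combine Corollary \ref{cor3.4} with Lemma \ref{lem3.7} to obtain $\frac{dL}{dt} \le \frac{L(t)}{2t}$, from which the monotonicity of $t^{-1/2} L(t)$ is immediate. You have simply spelled out the final differentiation step that the paper leaves implicit.
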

\begin{proof}
From Corollary \ref{cor3.4} and Lemma \ref{lem3.7}, we derive
\begin{equation} \label{3.20}
\frac{dL}{dt} \le \frac{L(t)}{2t}. 
\end{equation}
The lemma follows.
\end{proof}

\subsection{Long-time behavior} \label{subsec3.5}

\begin{lem} \label{lem3.9}
Given $x \in T^2$ and linearly independent vectors
$v_1, v_2 \in T_x T^2$, there is a constant
$C(v_1, v_2) < \infty$ with the following property.  Suppose that
$g_{T^2}$ is a flat metric on $T^2$. Then
\begin{equation} \label{3.21}
\diam \left( T^2, g_{T^2} \right) \le C(v_1, v_2) 
\left( |v_1|_{g_{T^2}} + |v_2|_{g_{T^2}} \right).
\end{equation}
\end{lem}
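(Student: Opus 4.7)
The plan is to reduce the lemma to planar lattice geometry. Fix an identification $T^2 \cong \R^2/\Z^2$ coming from the Lie group structure, so that $T_xT^2$ is canonically $\R^2$ and the standard basis $e_1, e_2$ generates $\Z^2 = \pi_1(T^2)$. In the context where this lemma is invoked (the fiber metrics $G_{ij}(y,t)\,dx^i dx^j$ from Section \ref{sec3}), the flat metric $g_{T^2}$ is translation-invariant by the local $U(1)\times U(1)$-symmetry, so it is given by a constant symmetric positive-definite matrix $G$ with $|v|_{g_{T^2}}^2 = v^T G v$ for $v \in \R^2$.

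The first step is to bound the diameter by the $G$-lengths of the lattice generators. Every pair of points of $T^2$ has lifts in the closed unit square $[0,1]^2$, and for any two such lifts $p, q$ one can write $p - q = a e_1 + b e_2$ with $a, b \in [-1,1]$; the triangle inequality in $(\R^2, G)$ then yields $|p - q|_G \le |e_1|_G + |e_2|_G$, and hence $\diam(T^2, g_{T^2}) \le |e_1|_G + |e_2|_G$. The second step is a change-of-basis computation: since $v_1, v_2$ form a basis of $\R^2$, I write $e_i = \alpha_{i1} v_1 + \alpha_{i2} v_2$ with scalars $\alpha_{ij}$ depending only on $v_1, v_2$, and the triangle inequality gives $|e_i|_G \le |\alpha_{i1}|\,|v_1|_G + |\alpha_{i2}|\,|v_2|_G$. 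Summing over $i$ produces $|e_1|_G + |e_2|_G \le C(v_1, v_2)\bigl(|v_1|_G + |v_2|_G\bigr)$ with $C(v_1, v_2) := \max_j\bigl(|\alpha_{1j}| + |\alpha_{2j}|\bigr)$, and combining the two steps proves the lemma.

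There is no substantive obstacle; the argument is purely linear-algebraic once the flat metric is identified with a constant matrix on the fixed tangent space $\R^2 = T_xT^2$. The key point is that the change-of-basis coefficients $\alpha_{ij}$ depend only on the fixed data $(v_1, v_2)$ and $(e_1, e_2)$, not on $G$, so the resulting constant $C(v_1, v_2)$ is uniform in the flat metric. The only conceptual care needed is to recognize that translation-invariance (coming from the $U(1)\times U(1)$-symmetry of the setting) is what allows $|\cdot|_G$ to be treated as a genuine norm on $\R^2$ with respect to which the triangle inequality applies globally on the universal cover.
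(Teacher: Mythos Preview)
Your proof is correct and takes essentially the same approach as the paper: both use the affine (translation-invariant) structure to build a path between any two points by moving along two fixed parallel directions, and then bound the $g_{T^2}$-length of that path. The only cosmetic difference is that the paper flows directly along the affine-parallel extensions $\mathcal{V}_1, \mathcal{V}_2$ of $v_1, v_2$ (using a fixed reference metric $g_0$ to produce the constant $c$), whereas you first bound the diameter by $|e_1|_G + |e_2|_G$ via the unit square and then perform a change of basis to $v_1, v_2$; the content is the same.
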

\begin{proof}
Let ${\mathcal V}_1$ and ${\mathcal V}_2$ be the affine-parallel vector
fields on $T^2$ that extend $v_1$ and $v_2$, respectively.
Let $g_0$ be a fixed flat metric on $T^2$. There is some
$c < \infty$ so that any pair of points in $T^2$ can
be joined by flowing first in the ${\mathcal V}_1$-direction
for a length at most $c$ and then flowing in the
${\mathcal V}_2$-direction for a length at most $c$. 
With respect to $g_{T^2}$, the length of this path is
bounded above by $c \frac{|v_1|_{g_{T^2}}}{|v_1|_{g_0}} + c 
\frac{|v_2|_{g_{T^2}}}{|v_2|_{g_0}}$. The
lemma follows.
\end{proof}

We use the notation in the statement of Theorem \ref{thm1.6}.

\subsubsection{Elliptic holonomy} \label{subsubsec3.5.1}

If $H$ is elliptic then after pulling back the $T^2$-bundle from
a finite covering $S^1 \rightarrow S^1$, we can assume that
$H = I$. Then $G(y,t)$ is a globally defined matrix-valued function
of $y \in \R/\Z$ and $t \in [0, \infty)$.

\begin{lem} \label{lem3.10}
The intrinsic diameter of the $T^2$-fibers is uniformly bounded 
above in $t$.
\end{lem}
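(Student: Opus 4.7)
The plan is to exploit the fact that, with $H = I$ after the finite-cover reduction already made in Subsection \ref{subsubsec3.5.1}, the matrix $G(y,t)$ is globally defined on $S^1 \times [0, \infty)$ and the coordinate vector fields $\partial_{x^1}, \partial_{x^2}$ give globally well-defined parallel sections on each fiber. In particular, $\Tr G$ is a well-defined scalar function on $S^1 \times [0, \infty)$. This is where the elliptic hypothesis really enters, since $\Tr(H^T G H) \ne \Tr G$ in general when $H \ne I$.

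The core step is to show that $\Tr G$ is a subsolution of the heat equation along (\ref{3.7}). Using the expression
\[
\Delta f \: = \: g^{yy}\bigl(f_{,yy} - \tfrac{g_{yy,y}}{2 g_{yy}} f_{,y}\bigr)
\]
for the Laplacian on $(S^1, g_{yy}(\cdot,t) dy^2)$ and taking the trace of (\ref{3.7}), one obtains
\[
\partial_t \Tr G \: = \: \Delta \Tr G \: - \: g^{yy} \Tr\bigl(G_{,y} G^{-1} G_{,y}\bigr).
\]
The correction term is nonnegative: writing $C = G^{-1/2} G_{,y} G^{-1/2}$ (a symmetric matrix), one has $\Tr(G_{,y} G^{-1} G_{,y}) = \Tr(G^{1/2} C^2 G^{1/2}) \ge 0$. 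So $\partial_t \Tr G \le \Delta \Tr G$ on the closed $1$-manifold $S^1$, and the parabolic maximum principle yields $\Tr G(y,t) \le C_0 := \max_{y' \in S^1} \Tr G(y',0)$ for all $(y,t)$.

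Finally, since $G$ is positive definite, $G_{11}(y,t), G_{22}(y,t) \le \Tr G(y,t) \le C_0$. Applying Lemma \ref{lem3.9} to the flat torus $(T^2, G(y,t))$ with $v_1 = \partial_{x^1}$ and $v_2 = \partial_{x^2}$, which satisfy $|v_i|^2_{G(y,t)} = G_{ii}(y,t) \le C_0$, then gives a uniform upper bound on the intrinsic fiber diameter.

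I do not anticipate a serious obstacle here. The only nonobvious point is identifying the right globally defined scalar quantity on $S^1$; once the evolution of $\Tr G$ is written down, the nonnegativity of the correction term (via the symmetric matrix $C$) and the passage from $C^0$-bounds on $G_{ii}$ to a diameter bound via Lemma \ref{lem3.9} are both immediate. It is also worth noting that this clean argument is exactly what fails for parabolic or hyperbolic $H$, and explains why the subsequent analysis for the hyperbolic case in Subsection \ref{subsec3.5} must proceed differently.
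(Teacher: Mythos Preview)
Your proof is correct and essentially the same as the paper's. The only cosmetic difference is that the paper applies the maximum principle to $\langle v, Gv \rangle$ for each fixed $v \in \R^2$ (obtaining $\partial_t \langle v, Gv \rangle = \Delta \langle v, Gv \rangle - g^{yy} \langle v, G_{,y} G^{-1} G_{,y} v \rangle$), whereas you sum over a basis and track $\Tr G$; both yield the same subsolution property and feed into Lemma~\ref{lem3.9} identically.
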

\begin{proof}
Given $v \in \R^2$, equation (\ref{3.7}) implies that
\begin{equation} \label{3.22}
\frac{\partial}{\partial t} \langle v, G v \rangle =
g^{yy} \langle v, G v \rangle_{;yy} - g^{yy}
\langle v, G_{,y} G^{-1} G_{,y} v \rangle.
\end{equation}
The maximum principle now implies that
$\langle v, G(y,t) v \rangle$ is uniformly bounded above for
$y \in S^1$ and $t \in [0, \infty)$. The lemma follows from
Lemma \ref{lem3.9}.
\end{proof}

\begin{prop} \label{prop3.11}
$\diam(N, h(t)) = O(\sqrt{t})$.
\end{prop}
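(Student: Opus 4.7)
My plan is to combine Lemma \ref{lem2.13} on Riemannian submersions with the length bound on the base circle (Lemma \ref{lem3.8}) and the fiber diameter bound (Lemma \ref{lem3.10}). Since we are in the elliptic case and have already passed to the finite cover in which $H = I$, we have $N = S^1 \times T^2$ globally, and the local decomposition
\begin{equation}
h(t) = g_{yy}(y,t) \, dy^2 + G_{ij}(y,t) \, dx^i \, dx^j
\end{equation}
is global. In particular, the projection $\pi : N \to S^1$ is a Riemannian submersion onto $(S^1, g_{yy}(\cdot,t) dy^2)$, whose fibers are the tori $T^2_y = \pi^{-1}(y)$ with induced metric $G_{ij}(y,t) dx^i dx^j$.

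First, I would observe that the base has length $L(t) = \int_{S^1} \sqrt{g_{yy}}(y,t) \, dy$, so its diameter is at most $L(t)/2$. By Lemma \ref{lem3.8}, $t^{-1/2} L(t)$ is nonincreasing, so for $t \ge 1$ we have $L(t) \le L(1) \sqrt{t}$, giving $\diam(S^1, g_{yy}(\cdot, t) dy^2) = O(\sqrt{t})$. Second, Lemma \ref{lem3.10} gives a uniform constant $D < \infty$ with $\diam(T^2_y, G_{ij}(y,t) dx^i dx^j) \le D$ for all $y \in S^1$ and all $t \ge 0$. Applying Lemma \ref{lem2.13} to $\pi$ yields
\begin{equation}
\diam(N, h(t)) \le \diam(S^1, g_{yy}(\cdot,t) dy^2) + 2 \max_{y \in S^1} \diam(T^2_y) \le \tfrac{1}{2} L(t) + 2D = O(\sqrt{t}).
\end{equation}

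Finally, since the original torus bundle $N$ is covered by the $S^1 \times T^2$ on which we just estimated the diameter (the degree of the cover is bounded by the order of $H$, which is a fixed constant depending only on the topology), and any finite Riemannian covering is distance-nonincreasing downstairs, the diameter of $(N, h(t))$ is bounded above by the diameter of the cover, hence is also $O(\sqrt{t})$.

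I do not expect any serious obstacle here: all the geometric inputs (the Riemannian submersion inequality, the bound on $L(t)$, and the bound on fiber diameters) are already in place, and the elliptic reduction ensures the submersion structure is global. The one thing to be slightly careful about is that Lemma \ref{lem3.10} was proved using the maximum principle applied to $\langle v, G v \rangle$ for fixed $v \in \R^2$, which is legitimate on the cover where $H = I$ (so that $G$ is globally defined as a matrix-valued function on $\R/\Z$); this is precisely why we passed to the cover at the beginning of Subsection \ref{subsubsec3.5.1}.
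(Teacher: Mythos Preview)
Your proof is correct and follows exactly the paper's approach: the paper's proof consists of the single line ``This follows from Lemmas \ref{lem2.13}, \ref{lem3.8} and \ref{lem3.10},'' and you have simply unpacked how those three ingredients combine. Your additional remark about descending the diameter bound from the finite cover back to the original $N$ is a valid point that the paper leaves implicit.
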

\begin{proof}
This follows from Lemmas \ref{lem2.13}, \ref{lem3.8} and \ref{lem3.10}.
\end{proof}

From \cite[Theorem 1.2.1]{L}, 
\begin{equation} \label{3.23}
\left( \max_{p \in N} \left|
\rem^N_h \right| (p,t) \right) \cdot \diam^2(N,h(t)) = o(t).
\end{equation}
The argument of Subsection \ref{subsec2.5} now shows that
$\lim_{t \rightarrow \infty} h(t)$ exists and is a flat metric on $N$,
with the convergence being exponentially fast.

\subsubsection{Hyperbolic holonomy} \label{subsubsec3.5.3}

Suppose that $H$ is hyperbolic.

Let $P(2,\R)$ denote the positive-definite symmetric $2 \times 2$
matrices. Given $G \in P(2, \R)$ and symmetric $2 \times 2$ matrices
$\delta_1 G, \delta_2 G \in T_G P(2, \R)$, we define their
inner product by
\begin{equation}
\langle \delta_1 G, \delta_2 G \rangle \: = \: \frac12 \:
\Tr \left( G^{-1} (\delta_1 G)  G^{-1} (\delta_2 G) \right).
\end{equation}

Consider the map $\Phi : \R \times \SL(2,\R) \rightarrow P(2, \R)$ given by
$(u, M) \rightarrow e^u M^T M$. Identifying $\SO(2) \backslash \SL(2, \R)$ with
the hyperbolic space $H^2$, the map $\Phi$ passes to an isometry
$\R \times H^2 \rightarrow P(2, \R)$.

With respect to this isometry,
the action of $\SL(2, \Z)$ on $P(2, \R)$ (by
$(A, G) \rightarrow A^T G A$) becomes the product of the trivial
action of $\SL(2, \Z)$ on $\R$ with the isometric action of
$\SL(2, \Z)$ on $H^2$. 
Letting $\langle H \rangle$ denote the cyclic subgroup of
$\SL(2, \Z)$ generated by the holonomy $H$,
equation (\ref{3.8})
can be interpreted as saying that for each $t$, the function
$G(\cdot, y)$ describes a smooth map
$S^1 \rightarrow (\R \times H^2/\langle H \rangle)$ whose
homotopy class is specified by $H$. 

\begin{lem} \label{lengthlem}
There is a constant $c > 0$ so that for all $t \in [0, \infty)$,
$L(t) \ge c \sqrt{t}$.
\end{lem}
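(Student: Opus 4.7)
The plan is to interpret the lower bound on $L(t)$ geometrically as a translation length in the symmetric space $P(2,\R)$ of positive-definite symmetric matrices, and then combine it with the curvature-type bound already established in Corollary \ref{cor3.4}.

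First, I would recall from the setup of Subsection \ref{subsubsec3.5.3} that $P(2,\R)$ carries the Riemannian metric
\begin{equation}
\langle \delta G, \delta G \rangle \: = \: \tfrac12 \Tr\bigl( G^{-1}(\delta G) G^{-1}(\delta G)\bigr),
\end{equation}
and that under the identification $P(2,\R) \cong \R \times H^2$ the $\SL(2,\Z)$-action $G \mapsto A^T G A$ acts trivially on the $\R$-factor and isometrically on $H^2$. Since $H$ is hyperbolic, this action on $H^2$ is by a hyperbolic isometry, so it has a positive translation length $\tau(H) > 0$ (explicitly $\tau(H) = 2\ln\lambda$, where $\lambda > 1$ is the larger eigenvalue of $H$). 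The boundary condition (\ref{3.8}) says that $G(1,t) = H^T G(0,t) H$, so the curve $\gamma_t : [0,1] \to P(2,\R)$ defined by $\gamma_t(y) = G(y,t)$ has endpoints displaced by this hyperbolic isometry. Consequently the length of $\gamma_t$ in $P(2,\R)$ satisfies
\begin{equation} \label{lowerbd}
\ell(t) \: := \: \int_0^1 \sqrt{\tfrac12 \Tr\bigl((G^{-1}G_{,y})^2\bigr)}\, dy \: \ge \: \tau(H).
\end{equation}

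Next I would obtain a pointwise upper bound on the speed $\sqrt{\tfrac12 \Tr((G^{-1}G_{,y})^2)}$ in terms of $\sqrt{g_{yy}}$ using Corollary \ref{cor3.4}. By the definition of ${\mathcal E}$ in (\ref{3.15}),
\begin{equation}
\Tr\bigl((G^{-1}G_{,y})^2\bigr) \: = \: g_{yy}(y,t) \, {\mathcal E}(y,t) \: \le \: \frac{2}{t}\, g_{yy}(y,t),
\end{equation}
so that
\begin{equation} \label{upperbd}
\ell(t) \: \le \: \int_0^1 \sqrt{\tfrac{1}{t} g_{yy}(y,t)}\, dy \: = \: \frac{L(t)}{\sqrt{t}}.
\end{equation}

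Combining (\ref{lowerbd}) and (\ref{upperbd}) gives $L(t) \ge \tau(H)\sqrt{t}$, which is the desired bound with $c = \tau(H)$. The main thing to be careful about is the identification of the action $G \mapsto H^T G H$ on $P(2,\R)$ with right multiplication by $H$ on $\SO(2)\backslash\SL(2,\R) = H^2$, so that its hyperbolicity in $\SL(2,\R)$ translates into a strictly positive minimal displacement; everything else is a direct use of Corollary \ref{cor3.4}.
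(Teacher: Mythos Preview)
Your proof is correct and follows essentially the same approach as the paper: both interpret $y \mapsto G(y,t)$ as a curve in $P(2,\R) \cong \R \times H^2$, use the hyperbolicity of $H$ to get a positive lower bound on the distance between $G(0,t)$ and $G(1,t) = H^T G(0,t) H$, and combine this with the speed bound from Corollary~\ref{cor3.4} to deduce $L(t) \ge c\sqrt{t}$. Your version is in fact slightly more explicit, identifying the constant as the translation length $\tau(H) = 2\ln\lambda$.
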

\begin{proof}
Lemma \ref{cor3.4} says that the Lipschitz constant of the map
$G(\cdot, t)$ from $[0,1]$ to $P(2, \R)$
is bounded above by $\frac{1}{\sqrt{t}}$. As $H$ is hyperbolic,
there is a minimal length $c > 0$ among all noncontractible closed curves in 
$H^2/\langle H \rangle$. Consequently, the distance 
between $G(0,t)$ and $G(1,t) = H^T G(0,t) H$ in 
$P(2, \R)$ is bounded below by $c$.
Thus $L(t) \ge c \sqrt{t}$.
\end{proof}

Recall the definitions of $g_s$ and $G_s$ from (\ref{3.10}).
By an appropriate $s$-dependent choice of basis for $\R^2$, we can
assume that $G_s(0,1) = I$. In making such a choice of
basis, we are ignoring the 
lattice structure that comes from writing $T^2$ as a quotient of $\R^2$. 
We are simply treating $G$ and $g$ as functions which satisfy
(\ref{3.7}) and (\ref{3.8}).

Let $X$ be the real symmetric matrix so that $e^X = H^T H$.

\begin{prop} \label{Wconv}
For any sequence $\{s_j\}_{j=1}^\infty$ going to infinity,
after passing to a subsequence and possibly reparametrizing $S^1$, we have
\begin{equation}
\lim_{j \rightarrow \infty} g_{s_j}(y,t) \: = \: \frac{t}{2} \: 
\Tr(X^2) \: dy^2
\end{equation}
and
\begin{equation}
\lim_{j \rightarrow \infty} G_{s_j}(y,t) = e^{yX},
\end{equation}
with smooth convergence on compact subsets of $S^1 \times [0, \infty)$.
\end{prop}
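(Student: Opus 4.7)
My plan is a compactness-and-rigidity argument: extract a smooth subsequential limit of the rescaled solutions $(g_{s_j}, G_{s_j})$ of (\ref{3.7}), show the limit saturates the inequality $\mathcal{E}_{\infty} = 2/t$, and then solve the resulting rigid system to identify the limit with (\ref{3.9}). The saturation argument uses the monotone quantity $L(t)/\sqrt{t}$, and the final identification is an elementary matrix ODE calculation.

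\textbf{Compactness.} The rescaled flow $(g_s, G_s)$ satisfies (\ref{3.7}) with uniformly bounded curvature (Proposition \ref{prop3.6}), bounded base length $L_s(1) = L(s)/\sqrt{s} \in [c, L(1)]$ (Lemmas \ref{lem3.8} and \ref{lengthlem}), fiber determinant trapped in $[C_1, C_2]$ (Corollary \ref{cor3.2}), and uniform Lipschitz bound $\mathcal{E}_s(y,1) \le 2$ (Corollary \ref{cor3.4}). With the basis normalization $G_{s_j}(0,1) = I$, the periodicity (\ref{3.8}) gives $G_{s_j}(1,1) = H^T H$, so the path $y \mapsto G_{s_j}(y,1)$ lies in a bounded subset of $P(2,\R)$ with a uniform modulus of continuity. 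Arzel\`a--Ascoli together with parabolic regularity for (\ref{3.7}) then yields a subsequence converging smoothly on compact subsets of $S^1 \times (0, \infty)$ to a solution $(g_\infty, G_\infty)$ of (\ref{3.7}) respecting the holonomy.

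\textbf{Saturation.} Lemmas \ref{lem3.8} and \ref{lengthlem} imply $L(t)/\sqrt{t} \searrow L_* > 0$, so its total decrease is finite. Lemma \ref{lem3.7} rewrites
\begin{equation*}
\frac{d}{dt}\!\left(\frac{L(t)}{\sqrt{t}}\right) \: = \: \frac{1}{4\sqrt{t}} \int_{S^1}\!\left(\mathcal{E} - \frac{2}{t}\right) \sqrt{g_{yy}}\, dy \: \le \: 0,
\end{equation*}
with the integrand nonpositive by Corollary \ref{cor3.4}. Substituting $t = s\tau$ and using $L_s(\tau)/\sqrt{\tau} = L(s\tau)/\sqrt{s\tau}$, we obtain, for any $0 < a < b$,
\begin{equation*}
\int_a^b \frac{1}{4\sqrt{\tau}} \int_{S^1}\!\left(\frac{2}{\tau} - \mathcal{E}_s\right)\sqrt{(g_s)_{yy}}\, dy\, d\tau \: = \: \frac{L(sa)}{\sqrt{sa}} - \frac{L(sb)}{\sqrt{sb}} \: \longrightarrow \: 0
\end{equation*}
as $s \to \infty$. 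Smooth convergence of the subsequence then forces $\mathcal{E}_\infty(y,t) \equiv 2/t$.

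\textbf{Rigidity and identification.} Plugging $\mathcal{E}_\infty = 2/t$ into (\ref{3.16}) cancels the first three terms on the right, so the manifestly nonnegative tail must vanish:
\begin{equation*}
G_\infty^{-1} (G_\infty)_{;yy} \: = \: (G_\infty^{-1} (G_\infty)_{,y})^{2}.
\end{equation*}
By (\ref{3.7}) this gives $\partial_t G_\infty = 0$ and $\partial_t g_{yy} = g_{yy}/t$, so $G_\infty(y)$ is $t$-independent and $g_{yy}(y,t) = t\,\phi(y)$ for some positive $\phi$. Reparametrizing $S^1$ to absorb $\phi$ (the reparametrization promised in the statement) we may assume $g_{yy}(y,t) = tC$ for a positive constant $C$. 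The rigidity equation then reads $(G_\infty^{-1}(G_\infty)_{,y})_{,y} = 0$, so $G_\infty^{-1}(G_\infty)_{,y} = M$ is constant; symmetry of $G_\infty$ and the normalization $G_\infty(0) = I$ force $M$ to be symmetric with $G_\infty(y) = e^{yM}$. The periodicity $G_\infty(1) = H^T H = e^X$ and the injectivity of $\exp$ on real symmetric matrices give $M = X$, and $\mathcal{E}_\infty = \Tr(M^2)/(tC) = 2/t$ then fixes $C = \tfrac{1}{2}\Tr(X^2)$.

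\textbf{Main obstacle.} The delicate step is the compactness argument: the fiber determinant bound alone does not prevent the spectrum of $G_s$ from spreading, so one must use the holonomy constraint $G_s(1,1) = H^T H$ together with the Lipschitz estimate from Corollary \ref{cor3.4} and the basis normalization $G_s(0,1) = I$ to confine the family to a compact region of $P(2,\R)$. Once compactness is in hand, the rest of the proof is forced by the sharp bounds already established.
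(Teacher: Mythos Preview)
Your argument is correct and takes a genuinely different route from the paper. The paper establishes convergence via the modified $W_+$-functional: it constructs an eternal positive solution $\widetilde{u}_\infty$ of the conjugate heat equation, uses the monotonicity formula \eqref{Wmon} for $W_+(G(t),g(t),\widetilde{f}_\infty(t),t)$, and then appeals to \cite[Proposition 4.80]{L} to classify the limiting soliton. You instead exploit the much simpler monotone quantity $L(t)/\sqrt{t}$, whose derivative you compute directly from Lemma~\ref{lem3.7} and Corollary~\ref{cor3.4}; its convergence forces the saturation ${\mathcal E}_\infty \equiv 2/t$, and you then read off the rigidity from the evolution equation \eqref{3.16} for ${\mathcal E}$ and solve the resulting first-order matrix ODE by hand. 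Your approach is more self-contained---it avoids the conjugate heat equation, the $W_+$-machinery, and the external reference for the soliton classification---at the cost of being tailored to this one-dimensional base (the $W_+$-functional is designed to work in greater generality). The compactness step is handled at essentially the same level of detail in both arguments; your observation that the normalization $G_s(0,1)=I$, the Lipschitz bound from Corollary~\ref{cor3.4}, and the holonomy constraint together pin $G_s(\cdot,1)$ to a compact region of $P(2,\R)$ is a clean way to organize that step.
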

\begin{proof}
The proof is similar to that in \cite[Propositions 4.39 and 4.79]{L};
see also \cite[Theorem 1.3]{FIN}.

We first construct a positive solution 
$\widetilde{u}_\infty$ of the conjugate heat equation
\begin{equation} \label{ueqn}
\frac{\partial u}{\partial t} = - \triangle u - \frac14 g^{yy} 
\Tr \left( \left( G^{-1} G_{,y} \right)^2 \right) u
\end{equation}
that is defined for $t \in [0, \infty)$. To do so,
note that if $u$ is a solution to (\ref{ueqn}) then $\int_{S^1}
u \: \sqrt{g_{yy}} \: dy$ is constant in $t$. Let $\{t_j\}_{j=1}^\infty$
be a sequence of times going to infinity. Let $\widetilde{u}_j(\cdot)$
be a solution to (\ref{ueqn}) on the interval $[0, t_j]$ with initial
condition 
$\widetilde{u}_j(t_j) = \frac{1}{L(t_j)}$.
One shows that
one can extract a subsequence of the $\widetilde{u}_j$'s that converges
smoothly on compact subsets of $S^1 \times [0, \infty)$ to a positive solution 
$\widetilde{u}_\infty(\cdot)$ of
(\ref{ueqn}).

Define $\widetilde{f}_\infty(\cdot)$ by $\widetilde{u}_\infty(t) = 
(4 \pi t)^{- \: \frac{n}{2}} e^{- \: \widetilde{f}_\infty(t)}$. 
Put
\begin{align}
& W_+(G,g,\widetilde{f},t) = \\
& \int_{S^1} \left[ t \left( |\nabla \widetilde{f}|^2 - \,
\frac14 g^{yy} \Tr \left( \left( G^{-1} G_{,y} \right)^2 \right) \right)
-\widetilde{f}+1 \right] (4\pi t)^{- \, \frac12} e^{-\widetilde{f}} 
\sqrt{g_{yy}} dy. \notag
\end{align}
From \cite[Proposition 4.64]{L},
\begin{align} \label{Wmon}
& \frac{d}{dt} W_+(G(t),g(t),\widetilde{f}_\infty(t),t) = \\
&\frac{t}{2} \int_{S^1} 
\Tr \left( \left( G^{-1} (\triangle G - g^{yy} G_{,y} G^{-1} G_{,y} -
g^{yy} G_{,y} \widetilde{f}_{\infty,y}) \right)^2 \right) \: 
\widetilde{u}_\infty \:
\sqrt{g_{yy}} \: dy \: + \notag \\
& 2t \int_{S^1} g^{yy} g^{yy} \left( - \frac14 \Tr \left(
\left( G^{-1} G_{,y} \right)^2 \right) + 
\widetilde{f}_{;yy} + \frac{1}{2t} g_{yy}
\right)^2 \: \widetilde{u}_\infty \:
\sqrt{g_{yy}} \: dy. \notag
\end{align}
In particular,
$W_+(G(t),g(t),\widetilde{f}_\infty(t),t)$ 
is monotonically nondecreasing in $t$.
Put
\begin{equation}
W_\infty = \lim_{t \rightarrow \infty} W_+(G(t),g(t),
\widetilde{f}_\infty(t),t),
\end{equation}
which at the moment could be infinity.

Using the curvature bound from Proposition \ref{prop3.6} and the
diameter bounds from Lemmas \ref{lem3.8} and \ref{lengthlem},
one shows that after passing to a subsequence,
$\lim_{j \rightarrow \infty} (g_{s_j}(\cdot), G_{s_j}(\cdot))$ exists in
the topology of smooth convergence on compact subsets
of $S^1 \times [0, \infty)$, and equals
a solution $(g_\infty(\cdot), G_\infty(\cdot))$ of (\ref{3.7})
on a circle of time-$1$ length 
\begin{equation}
L_\infty = \lim_{t \rightarrow \infty}
\frac{L(t)}{\sqrt{t}}.
\end{equation}
(The notion of convergence allows
for $j$-dependent diffeomorphisms of $S^1$.) 

Put ${u}_j(t) = \widetilde{u}_\infty(t+s_j)$. 
After passing to a subsequence, we
can assume that $\lim_{j \rightarrow \infty} u_j(t) = u_\infty(t)$
for some solution $u_\infty(\cdot)$ to (\ref{ueqn}) 
(relative to $g_\infty(\cdot)$ and $G_\infty(\cdot)$), with
smooth convergence on compact subsets of $S^1 \times [0, \infty)$. Define
$f_\infty(\cdot)$ by
${u}_\infty(t) = (4 \pi t)^{- \: \frac{n}{2}} e^{- \: f_\infty(t)}$. 
Then for all $t$, we have
$W_+(G_\infty(t),g_\infty(t),{f}_\infty(t),t) = W_\infty$.
In particular, $W_\infty < \infty$.

From (\ref{Wmon}), we obtain
\begin{align}
\triangle G_\infty - g_\infty^{yy} G_{\infty,y} G_\infty^{-1} 
G_{\infty,y} -
g_\infty^{yy} G_{\infty,y} {f}_{\infty,y} & = 0, \\
- \frac14 \Tr \left(
\left( G_\infty^{-1} G_{\infty,y} \right)^2 \right) + 
{f}_{\infty;yy} + \frac{1}{2t} g_{\infty,yy} & = 0. \notag
\end{align}
From \cite[Proposition 4.80]{L}, there is a traceless
symmetric matrix $X$ so that $g_{\infty,yy}(y,t) \: = \: \frac{t}{2} \: 
\Tr(X^2)$ and $G_{\infty}(y,t) = e^{yX}$. 
Here $y$ is a parametrization of $S^1$ whose
time-$1$ velocity is $L_\infty$.
For each $j$, we had $G_{s_j}(y+1, t) = H^T G_{s_j}(y,t) H$. Hence 
$G_{\infty}(y+1, t) = H^T G_{\infty}(y,t) H$ and so 
$e^X = H^T H$.
\end{proof}

We now prove part (ii) of Theorem \ref{thm1.6}.
From Proposition \ref{Wconv}, for any $K \in \Z^+$ and 
any $\epsilon > 0$,
there is some $t_0 < \infty$ such that
$\left( \frac{g_{yy}(y,t_0)}{t_0}, G(y,t_0) \right)$ is $\epsilon$-close
in the $C^K$-norm to 
$\left( \frac12 \Tr(X^2), e^{yX} \right)$. From the local 
stability result of
\cite[Theorem 3]{Knopf2}, after an overall reparametrization of
$S^1$, we have
\begin{align}
\lim_{t \rightarrow \infty} \frac{g_{yy}(y,t)}{t} & \: = \: \frac12 \: \Tr(X^2), \\
\lim_{t \rightarrow \infty} G(y,t) & \: = \: e^{yX}. \notag
\end{align}
The convergence is exponentially fast in the variable
$\ln (t)$, i.e. power-law fast in $t$.
(Strictly speaking, the result in \cite[Theorem 3]{Knopf2} is for
the modified Ricci flow (\ref{3.7}) but there is a similar
result for the unmodified Ricci flow (\ref{3.6}).)

This proves Theorem \ref{thm1.6}.

\begin{remark} \label{rem3.14}

Suppose that $H$ is parabolic. By Corollary \ref{cor3.2},
the fiber volumes are uniformly bounded above and below
by positive constants.
After pulling back to a double cover of the base $S^1$, if necessary,
we can assume that $\Tr(H) = 2$.
If $v \in \R^2$ is a nonzero $H$-invariant vector then
(\ref{3.22}) gives a uniform upper bound on the squared 
length $\langle v, G(y,t) v \rangle$.

Lemma \ref{lem3.7} implies that the length $L(t)$ of the base circle
is monotonically nondecreasing in $t$. We claim that
$\lim_{t \rightarrow \infty} \frac{L(t)}{\sqrt{t}} = 0$. If not then
we would conclude from the proof of Proposition \ref{Wconv} that
$N$ has a $\Sol$-structure, which contradicts the topological fact that it
has a $\Nil$-structure. 
  
If we knew that the diameters of the $T^2$-fibers
were $O(\sqrt{t})$ then we could conclude
from Lemma \ref{lem2.13} and \cite[Theorem 1.2.2]{L} 
that the pullback Ricci flow solution on the universal cover
$\widetilde{N}$ approaches the $\Nil$ expanding soliton. Based on
the calculation in the locally homogeneous $\Nil$ case, as 
in \cite[Subsubsection 3.3.3]{L0}, we expect that both $L(t)$ and
the fiber diameters are $O \left( t^\frac16 \right)$.
\end{remark}

\bibliography{warped100411}

\begin{thebibliography}{CFG92}

\bibitem[Cao05]{Cao}
Xiaodong Cao.
\newblock Isoperimetric estimate for the {R}icci flow on {$S^2\times S^1$}.
\newblock {\em Comm. Anal. Geom.}, 13(4):727--739, 2005.

\bibitem[CFG92]{Cheeger-Fukaya-Gromov}
Jeff Cheeger, Kenji Fukaya, and Mikhael Gromov.
\newblock Nilpotent structures and invariant metrics on collapsed manifolds.
\newblock {\em J. Amer. Math. Soc.}, 5(2):327--372, 1992.

\bibitem[CG90]{Cheeger-Gromov}
Jeff Cheeger and Mikhael Gromov.
\newblock Collapsing {R}iemannian manifolds while keeping their curvature
  bounded. {II}.
\newblock {\em J. Differential Geom.}, 32(1):269--298, 1990.

\bibitem[CIJ90]{CIJ}
Mauro Carfora, James Isenberg, and Martin Jackson.
\newblock Convergence of the {R}icci flow for metrics with indefinite {R}icci
  curvature.
\newblock {\em J. Differential Geom.}, 31(1):249--263, 1990.

\bibitem[CK04]{Chow-Knopf}
Bennett Chow and Dan Knopf.
\newblock {\em The {R}icci flow: an introduction}, volume 110 of {\em
  Mathematical Surveys and Monographs}.
\newblock American Mathematical Society, Providence, RI, 2004.

\bibitem[CLN06]{Chow-Lu-Ni}
Bennett Chow, Peng Lu, and Lei Ni.
\newblock {\em Hamilton's {R}icci flow}, volume~77 of {\em Graduate Studies in
  Mathematics}.
\newblock American Mathematical Society, Providence, RI, 2006.

\bibitem[CT06]{Cheeger-Tian}
Jeff Cheeger and Gang Tian.
\newblock Curvature and injectivity radius estimates for {E}instein
  4-manifolds.
\newblock {\em J. Amer. Math. Soc.}, 19(2):487--525 (electronic), 2006.

\bibitem[FIN05]{FIN}
Michael Feldman, Tom Ilmanen, and Lei Ni.
\newblock Entropy and reduced distance for {R}icci expanders.
\newblock {\em J. Geom. Anal.}, 15(1):49--62, 2005.

\bibitem[Fuk89]{Fukaya}
Kenji Fukaya.
\newblock Collapsing {R}iemannian manifolds to ones with lower dimension. {II}.
\newblock {\em J. Math. Soc. Japan}, 41(2):333--356, 1989.

\bibitem[GIK02]{GIK}
Christine Guenther, James Isenberg, and Dan Knopf.
\newblock Stability of the {R}icci flow at {R}icci-flat metrics.
\newblock {\em Comm. Anal. Geom.}, 10(4):741--777, 2002.

\bibitem[Gro78]{Gromov}
M.~Gromov.
\newblock Almost flat manifolds.
\newblock {\em J. Differential Geom.}, 13(2):231--241, 1978.

\bibitem[Ham95]{Ha}
Richard~S. Hamilton.
\newblock The formation of singularities in the {R}icci flow.
\newblock In {\em Surveys in differential geometry, {V}ol.\ {II} ({C}ambridge,
  {MA}, 1993)}, pages 7--136. Int. Press, Cambridge, MA, 1995.

\bibitem[HI93]{Hamilton-Isenberg}
Richard Hamilton and James Isenberg.
\newblock Quasi-convergence of {R}icci flow for a class of metrics.
\newblock {\em Comm. Anal. Geom.}, 1(3-4):543--559, 1993.

\bibitem[IJ92]{Isenberg-Jackson}
James Isenberg and Martin Jackson.
\newblock Ricci flow of locally homogeneous geometries on closed manifolds.
\newblock {\em J. Differential Geom.}, 35(3):723--741, 1992.

\bibitem[KL08]{KL}
Bruce Kleiner and John Lott.
\newblock Notes on {P}erelman's papers.
\newblock {\em Geom. Topol.}, 12(5):2587--2855, 2008.

\bibitem[KM01]{Knopf-Mcleod}
Dan Knopf and Kevin McLeod.
\newblock Quasi-convergence of model geometries under the {R}icci flow.
\newblock {\em Comm. Anal. Geom.}, 9(4):879--919, 2001.

\bibitem[Kno00]{Knopf}
Dan Knopf.
\newblock Quasi-convergence of the {R}icci flow.
\newblock {\em Comm. Anal. Geom.}, 8(2):375--391, 2000.

\bibitem[Kno09]{Knopf2}
Dan Knopf.
\newblock Convergence and stability of locally {$\Bbb R^N$}-invariant solutions
  of {R}icci flow.
\newblock {\em J. Geom. Anal.}, 19(4):817--846, 2009.

\bibitem[Lis08]{Li}
Bernhard List.
\newblock Evolution of an extended {R}icci flow system.
\newblock {\em Comm. Anal. Geom.}, 16(5):1007--1048, 2008.

\bibitem[Lot07]{L0}
John Lott.
\newblock On the long-time behavior of type-{III} {R}icci flow solutions.
\newblock {\em Math. Ann.}, 339(3):627--666, 2007.

\bibitem[Lot10]{L}
John Lott.
\newblock Dimensional reduction and the long-time behavior of {R}icci flow.
\newblock {\em Comment. Math. Helv.}, 85(3):485--534, 2010.

\bibitem[Per02]{Perelman}
Grisha Perelman.
\newblock The entropy formula for the {R}icci flow and its geometric
  applications.
\newblock http://arxiv.org/abs/math.DG/0211159/, 2002.

\bibitem[Ron07]{Rong}
Xiaochun Rong.
\newblock Collapsed manifolds with bounded sectional curvature and
  applications.
\newblock In {\em Surveys in differential geometry. {V}ol. {XI}}, volume~11 of
  {\em Surv. Differ. Geom.}, pages 1--23. Int. Press, Somerville, MA, 2007.

\bibitem[Sco83]{Scott}
Peter Scott.
\newblock The geometries of {$3$}-manifolds.
\newblock {\em Bull. London Math. Soc.}, 15(5):401--487, 1983.

\end{thebibliography}
\bibliographystyle{alpha}
\end{document}